\documentclass[12pt,letterpaper]{article}

\usepackage{renstyles}

\graphicspath{{./}{../}{../Figures/}{./Figures}}

\newcommand{\Ai}{{\bf (A-i)}}
\newcommand{\Aii}{{\bf (A-ii)}}
\newcommand{\Aiii}{{\bf (A-iii)}}
\newcommand{\Aiv}{{\bf (A-iv)}}

\title{Improving Learning of Green's Functions Through Kernel Decomposition}
\author{
    Beiji Chen \thanks{Department of Applied Physics and Applied Mathematics, Columbia University, New York, NY 10027; bc3073@columbia.edu}
    \and 
    Qiang Du \thanks{Department of Applied Physics and Applied Mathematics, and Data Science Institute, Columbia University, New York, NY 10027; Email: qd2125@columbia.edu}
    \and 
    Kui Ren \thanks{Department of Applied Physics and Applied Mathematics, Columbia University, New York, NY 10027; kr2002@columbia.edu} 
    \and
    Tian-Yi Zhou \thanks{Data Science Institute, Columbia University, New York, NY 10027; tz2699@columbia.edu}
} 

\begin{document}
\date{}

\maketitle

\begin{abstract}
This work analyzes a framework for learning the solution operator, or equivalently, the Green's function, of a general self-adjoint second-order elliptic operator $\cL$. Subtracting the singular part of the Green's function, either known analytically or supplied by an explicit Laplace parametrix, recasts the learning of $\cL^{-1}$ as learning an $\cL$-harmonic extension operator on the boundary. We formulate the process as operator learning between reproducing kernel Hilbert spaces (RKHSs) to show that the kernel decomposition strategy raises the algebraic input rate by two orders and, under uniform analytic regularity of the induced boundary data together with compatible output spaces, upgrades the output rate from algebraic to exponential. We extend the framework further to semilinear problems via an iterative decomposition in which the only learned object remains a linear operator, ensuring that the improved linear rates translate directly to the nonlinear iteration. Numerical experiments are provided to validate the theoretical development.
\end{abstract}

\begin{keywords}
learning Green's function, reproducing kernel Hilbert space, kernel decomposition, approximation rates, operator learning
\end{keywords}


\begin{AMS}
46E22, 35J08, 65N15, 68T05, 41A25
\end{AMS}

\section{Introduction}
\label{SEC:Intro}

Let $\Omega\subset\bbR^d$ ($d\ge2$) be a bounded domain with an analytic boundary $\partial\Omega$. Let $u=u(x)$ be the solution to the homogeneous Dirichlet problem
\begin{equation}\label{EQ:Diff}
 \cL u=f,\; \mbox{ in }\, \Omega,\; \mbox{ with }\, \cL =-\nabla\cdot(\gamma\nabla)+\sigma,
\quad \mbox{and} \quad u=0,\; \mbox{on}\, \partial\Omega
\end{equation}
for a given source function $f=f(x)$. The given coefficients $\gamma$ and $\sigma$ are assumed to be real-analytic on $\overline\Omega$, satisfying the following uniform ellipticity and nonnegativity conditions:
\begin{equation}\label{EQ:Coeff Cond}
   0<\gamma_0\le\gamma(x)\le\gamma_1<\infty,
   \qquad \sigma(x)\ge0,
   \qquad x\in\overline\Omega .
\end{equation}
Let $T: H^s(\Omega) \to H^{s+2}(\Omega)\cap H_0^1(\Omega)$, $s\ge 0$, be the solution operator to~\eqref{EQ:Diff}. Then it is well-known that the solution has the Green's function representation
\begin{equation}\label{EQ:Diff Sol}
    u(x)=(Tf)(x)=\int_\Omega G(x,y)\,f(y)\,dy,\quad \forall f\in H^s(\Omega)
\end{equation}
where $G=G(x,y)$ is the Green's function for problem~\eqref{EQ:Diff}.

Learning the solution operator $T$, or equivalently, its kernel $G$, from observed source-solution pairs has become a much-studied problem in scientific machine learning, because the analytic form of $G$ is rarely available for complex geometries or heterogeneous media even when the governing equation  \eqref{EQ:Diff Sol} is fully known~\cite{BoEaTo-SR22, BoKiShTo-JMLR22, GiLuBrKu-SR21}. 

Data-driven approaches to such solution operators fall into two broad families. The first parameterizes the operator by a neural architecture, including DeepONet~\cite{LuJiPaZhKa-NMI21, WaWaPe-SA21}, Fourier neural operators~\cite{LiKoAzLiStAn-ICLR21, KoLaMi-JMLR21}, graph neural operators~\cite{LiKoAzLiBaSuAn-arXiv20}, and their physics-informed variants~\cite{GoAnZaRa-JMPS22, KoLiLiAzBhStAn-JMLR23}, with physics-informed neural networks~\cite{RaPeKa-JCP19,KaKeLuPeWaYa-NRP21,MiMo-IMA22} pursuing the same strategy for a single equation, or learns the Green's function directly through deep or spectral parameterizations~\cite{GiLuBrKu-SR21, BoEaTo-SR22, BoKiShTo-JMLR22, BoHaTo-PNAS23,HaLiXiXuYa-SIAM26,TeZhWaJu-MSML22}. These methods are expressive and mesh-flexible but offer limited \emph{a priori} error control and can be data-intensive. The second family builds the operator from reproducing kernels or Gaussian processes~\cite{ChHoOwSt-JCP21, KaDuPrCaRaAu-JMLR16, Stepaniants-JMLR23, BaDaHoOw-JCP24, LeMeMi-arXiv24, MoYoHoOwBo-CMAME25}, with recent extensions to nonlinear and dynamical settings~\cite{Lazar-arXiv24, Withanachchi-arXiv25, JiHeRaPaLu-NC25, XuLoXuYaZhOw-ICML25} and a companion line establishing \emph{a priori} error and sample-complexity guarantees~\cite{BaChHoOwSt-JCP25, JaOsHsHoOw-arXiv25, BoTo-HNA24, LeMeMi-arXiv24}, including a kernel estimator of the Dirichlet solution operator with convergence rates~\cite{HuOr-arXiv26}. It trades some expressivity for sharper theory and data efficiency. Across most of both families the Green's function is treated as a single unstructured kernel to be fit. A few recent works do separate a known singular part from a learned smooth remainder~\cite{LiChHuChChWaSh-CMS23,SuLiZhJuXu-JCP26,HuWaYaZePe-arXiv26,TeGrBe-arXiv26}, and the same idea has been applied to the Dirichlet-to-Neumann map~\cite{LiYiZh-arXiv26}. These constructions are neural, and only~\cite{LiYiZh-arXiv26}, which concerns the boundary-to-boundary map rather than the Green's function, comes with \emph{a priori} error bounds. The present work supplies the corresponding analysis for the Green's function itself: a computable decomposition for variable coefficients and approximation rates that quantify the gain.

The structure utilized in the present work is embedded in the Green's function itself. For a self-adjoint second-order elliptic operator $\cL$ of the form given in \eqref{EQ:Diff}, the Dirichlet Green's function splits as $G=\Phi^\cL-H^\cL$ into a fundamental solution $\Phi^\cL$, which carries the entire source singularity, and a corrector $H^\cL$ that is $\cL$-harmonic, and therefore real-analytic, inside the domain. The numerically hard-to-approximate part of $G$ is thus exactly its singular part, while the part that must be learned is regular, so subtracting the singular part before learning should make the problem 
more easily solvable. Let $T_{\Phi^\cL}$ and $T_{H^\cL}$ be the integral operators corresponding to the kernels $\Phi^\cL$ and $H^\cL$ respectively (with $T_{\Phi^\cL}$ defined by restricting $\Phi^\cL$ to $\Omega\times\Omega$). Then, the solution operator is given by
\begin{equation}\label{EQ:T Split}
T=T_{\Phi^\cL}-T_{H^\cL}.
\end{equation} 
When $\Phi^\cL$ is known (for instance, when $\cL$ is the Dirichlet Laplacian), the problem of learning $T$ becomes an {\em easier} task of learning the operator $T_{H^\cL}$. The main task for us is to convert such an intuition into a provable gain, namely, a faster rate of approximation. This motivates a more general operator-learning setting where the error is governed quantitatively by the regularity of the object being learned.
We develop an abstract framework for such a setting in the context of reproducing kernel Hilbert spaces (RKHSs). 
 
Let $(\cH_X,\cX,K_X)$ and $(\cH_Y,\cY,K_Y)$ be RKHSs on domains $\cX$ and $\cY$ with kernels $K_X$ and $K_Y$ and 
the operator $ T : \cH_X \rightarrow \cH_Y$. Our goal is to construct a bounded surrogate operator 
\begin{equation}
    \wh T : \cH_X \rightarrow \cH_Y
\end{equation}
that approximates the true underlying operator $T$ based on a set of $N$ paired data points:
\begin{equation}\label{EQ:Data}
    \{(f_\ell, u_\ell:=Tf_\ell)\}_{\ell=1}^N.
\end{equation}

The main contribution of this work is twofold: to establish, for $\sigma\not\equiv0$, the $a$-orthogonal direct-sum structure $\cH_N\cong\cH_G\oplus_a\cH_{H^\cL_a}$ of the energy space (Theorem~\ref{THM:Direct Sum Gen}), and to exploit the pointwise splitting $G=\Phi^\cL-H^\cL$ to reformulate the learning problem. The latter is used to construct
\begin{equation}
    \wh T_{H^\cL} : \wt\cH_{\wt X} \rightarrow \wt\cH_{\wt Y}
\end{equation}
that approximates the corrector operator $T_{\Phi^\cL}-T=T_{H^\cL}$, between reformulated input/output RKHSs $\wt\cH_{\wt X},\wt\cH_{\wt Y}$, to be identified in Sections~\ref{SEC:Elliptic}--\ref{SEC:Computable-Decom} for the problem \eqref{EQ:Diff Sol}, based on a set of $N$ paired data points:
\begin{equation}\label{EQ:Data Residual}
    \{(f_\ell, w_\ell:=T_{\Phi^\cL} f_\ell-u_\ell)\}_{\ell=1}^N.
\end{equation}
We will present \emph{a priori} \emph{approximation} bounds to control the error of the best surrogate representable, independent of the sample size $N$. The data enter only through the fitting of the resulting finite-dimensional map, a step that the framework treats as a black box.

The remainder of the paper is organized as follows. We first recall the RKHS background in Section~\ref{SEC:Prelim} used throughout the work. We then develop an abstract framework of learning operators between RKHSs in Section~\ref{SEC:Learning-RKHS}. In Section~\ref{SEC:Elliptic}, we specialize the framework to the problem of learning $T$ in~\eqref{EQ:Diff Sol} based on kernel decomposition and show that such a decomposition leads to sharper rates. A computable version of the decomposition is proposed in Section~\ref{SEC:Computable-Decom}. Section~\ref{SEC:Nonlinear} extends the construction to nonlinear (semilinear) solution operators, where the fitted map is no longer exactly representable by a matrix, and where an iterative use of the learned linear operator preserves the applicable linear rates under uniform assumptions. Finally, we present some numerical simulations in Section~\ref{SEC:Numer} to illustrate the approximation gains. Concluding remarks are offered in Section~\ref{SEC:Concl}.

\section{Preliminaries on RKHS}
\label{SEC:Prelim}


We recall here some basic results from the theory of reproducing kernel Hilbert spaces (RKHS) for the sake of self-containedness. We refer interested readers to~\cite{Aronszajn-TAMS50, BeTh-Book11,SaSa-Book16} and references therein for more technical details. We only work with spaces of real-valued functions. Throughout this section, $\Omega$ denotes an arbitrary nonempty set. 

\begin{definition}\label{DEF:PD Kernel}
A symmetric function $K:\Omega\times \Omega\to\bbR$ on a nonempty set $\Omega$ is a \emph{positive-definite kernel} if for every finite collection $\{x_i\}_{i=1}^m\subset \Omega$ and coefficients $\{c_i\}_{i=1}^m\subset\bbR$,
\[
\sum_{i,j=1}^m c_i c_j K(x_i,x_j) \;\ge\; 0.
\]
\end{definition}

\begin{definition}\label{DEF:RKHS}
A Hilbert space $\cH$ of functions $f:\Omega\to\bbR$ is a \emph{reproducing kernel Hilbert space} if there exists a kernel $K:\Omega\times \Omega\to\bbR$, called the \emph{reproducing kernel}, such that $K(x,\cdot)\in\cH$ for every $x\in \Omega$ and
\begin{equation}\label{EQ:Reproducing}
f(x) = \langle f,\, K(x,\cdot)\rangle_{\cH} \quad \text{for all } f\in\cH,\; x\in \Omega.
\end{equation}
\end{definition}

The reproducing property~\eqref{EQ:Reproducing} immediately implies that point evaluation is a bounded linear functional in the sense that  $|f(x)|\le\|K(x,\cdot)\|_{\cH}\|f\|_{\cH}$. If, in addition, $K$ is continuous and bounded on the diagonal (as holds for every kernel used in this paper), it follows that $\cH$ embeds continuously into $\cC^0(\Omega)$. It is also well-known that not only is the reproducing kernel necessarily symmetric and positive definite, but also the converse is true, as given in the following Moore-Aronszajn Theorem
\cite{Aronszajn-TAMS50,BeTh-Book11}.
\begin{theorem}[Moore-Aronszajn]\label{THM:Moore Aronszajn}
For every positive-definite kernel $K:\Omega\times \Omega\to\bbR$, there exists a unique RKHS $\cH_K$ with reproducing kernel $K$, given by $\cH_K = \overline{\mathrm{span}\{K(x,\cdot): x\in \Omega\}}$.
\end{theorem}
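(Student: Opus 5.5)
The plan is the standard two-stage construction: build an inner-product space from finite kernel combinations, complete it, and then show the completion is a space of functions on $\Omega$ with reproducing kernel $K$. Uniqueness is handled separately at the end.

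\emph{Pre-Hilbert space.} Let $\cH_0=\mathrm{span}\{K(x,\cdot):x\in\Omega\}$. For $f=\sum_i a_iK(x_i,\cdot)$ and $g=\sum_j b_jK(y_j,\cdot)$, set
\[
\langle f,g\rangle_0=\sum_{i,j}a_ib_jK(x_i,y_j).
\]
This is well defined, because it equals $\sum_j b_j f(y_j)=\sum_i a_i g(x_i)$, and so it depends only on the functions $f,g$, not on the chosen representations. The form is bilinear and symmetric, and positive definiteness of $K$ gives $\langle f,f\rangle_0\ge0$. By construction $\langle f,K(x,\cdot)\rangle_0=f(x)$. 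Cauchy--Schwarz for positive semidefinite forms then yields $|f(x)|\le \langle f,f\rangle_0^{1/2}K(x,x)^{1/2}$. Hence $\langle f,f\rangle_0=0$ forces $f\equiv0$, and $\langle\cdot,\cdot\rangle_0$ is a genuine inner product.

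\emph{Completion as functions.} This is the main obstacle: the completion must consist of actual functions on $\Omega$, with the map from abstract limits to functions injective. Take a Cauchy sequence $(f_n)$ in $\cH_0$. The pointwise bound above makes $(f_n(x))$ Cauchy for every $x$, so it has a pointwise limit $f(x)$. Define $\cH_K$ as the set of all such pointwise limits, and set $\|f\|=\lim\|f_n\|_0$. To see this is well defined, suppose two Cauchy sequences $(f_n)$ and $(g_n)$ have the same pointwise limit, and put $h_n=f_n-g_n$; then $h_n\to0$ pointwise. We must show $\|h_n\|_0\to0$. Fix $m$ and write $h_m=\sum_i c_iK(x_i,\cdot)$; then
\[
\langle h_n,h_m\rangle_0=\sum_i c_ih_n(x_i)\to0 \quad (n\to\infty).
\]
Hence
\[
\|h_n\|_0^2=\langle h_n,h_n-h_m\rangle_0+\langle h_n,h_m\rangle_0\le \|h_n\|_0\,\|h_n-h_m\|_0+\langle h_n,h_m\rangle_0 .
\]
The sequence $\|h_n\|_0$ is bounded and $(h_n)$ is Cauchy, so taking $m$ large makes the first term small uniformly in $n$. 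Then letting $n\to\infty$ kills the second term, giving $\|h_n\|_0\to0$. Consequently $\cH_K$ is isometric to the abstract completion of $\cH_0$, and is therefore a Hilbert space containing $\cH_0$ as a dense subspace. The reproducing property passes to the limit: $f(x)=\lim f_n(x)=\lim\langle f_n,K(x,\cdot)\rangle=\langle f,K(x,\cdot)\rangle$. Since $\cH_0$ is dense, $\cH_K=\overline{\mathrm{span}\{K(x,\cdot)\}}$.

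\emph{Uniqueness.} Let $\cH'$ be any RKHS with kernel $K$. The reproducing property gives $\langle K(x,\cdot),K(y,\cdot)\rangle_{\cH'}=K(x,y)$, so $\cH'$ contains $\cH_0$ with the same inner product, and hence contains its closure $\cH_K$ isometrically. If $g\in\cH'$ is orthogonal to $\cH_K$, then $g(x)=\langle g,K(x,\cdot)\rangle=0$ for every $x$, so $g=0$. Therefore $\cH'=\cH_K$ with the same norm.
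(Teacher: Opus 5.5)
Your proof is correct, and it is the classical Moore--Aronszajn construction: build the pre-Hilbert space on the span, realize the completion as pointwise limits using the lemma that a norm-Cauchy sequence converging pointwise to zero tends to zero in norm, and get uniqueness from the orthogonal complement. The paper gives no proof of its own and simply cites this standard argument from \cite{Aronszajn-TAMS50,BeTh-Book11}, so your route is the one it relies on. The only step worth an extra line is in the uniqueness part, where identifying the closure of $\mathrm{span}\{K(x,\cdot)\}$ in $\cH'$ with $\cH_K$ as a set of functions uses that $\cH'$-norm limits are pointwise limits, which follows from the reproducing property in $\cH'$.
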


The following Aronszajn sum theorem characterizes the RKHS generated by a sum of kernels.
\begin{theorem}[Aronszajn Sum]\label{THM:Aronszajn Sum}
If $K_1$ and $K_2$ are reproducing kernels of $\cH_1$ and $\cH_2$, then $K=K_1+K_2$ is the reproducing kernel of the sum $\cH_1+\cH_2=\{f_1+f_2:f_1\in\cH_1,\ f_2\in\cH_2\}$, with norm
\[
\|f\|_{\cH}^2 := \min_{\substack{f=f_1+f_2\\ f_1\in\cH_1,\,f_2\in\cH_2}}\bigl(\|f_1\|_{\cH_1}^2+\|f_2\|_{\cH_2}^2\bigr).
\]
\end{theorem}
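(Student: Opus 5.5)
The plan is the classical quotient-space argument. Consider the product Hilbert space $\cH_1\times\cH_2$ with norm $\|(f_1,f_2)\|^2=\|f_1\|_{\cH_1}^2+\|f_2\|_{\cH_2}^2$, and the linear map $S:\cH_1\times\cH_2\to\bbR^\Omega$, $S(f_1,f_2)=f_1+f_2$, whose range is exactly $\cH_1+\cH_2$. The kernel $\cN=\ker S$ is closed. To see this, note that point evaluations are bounded on each $\cH_i$, so $(f_1,f_2)\mapsto f_1(x)+f_2(x)$ is continuous for every $x$, and $\cN$ is the intersection of the kernels of these functionals.

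Hence $S$ restricted to $\cN^\perp$ is a bijection onto $\cH_1+\cH_2$. Define the norm on the sum by transporting the norm of $\cN^\perp$. For $f=S(g)$ with $g\in\cN^\perp$, every other preimage has the form $g+n$ with $n\in\cN$, and $\|g+n\|^2=\|g\|^2+\|n\|^2$. So the minimum in the statement is attained, equals $\|g\|^2$, and the space is a Hilbert space, being isometric to the closed subspace $\cN^\perp$.

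Next, verify the reproducing property.

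\textbf{Membership.} For each $x$, $K(x,\cdot)=S(K_1(x,\cdot),K_2(x,\cdot))$, so it lies in the sum.

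\textbf{Reproduction.} Let $k_x=(K_1(x,\cdot),K_2(x,\cdot))$. For any $g=(f_1,f_2)$,
\[
\langle g,k_x\rangle=f_1(x)+f_2(x)=(Sg)(x).
\]
In particular, $k_x\perp\cN$, so $k_x\in\cN^\perp$, and it is the canonical preimage of $K(x,\cdot)$. For $f=Sg$ with $g\in\cN^\perp$, we then get $\langle f,K(x,\cdot)\rangle_{\cH}=\langle g,k_x\rangle=f(x)$.

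By uniqueness in Theorem~\ref{THM:Moore Aronszajn}, $K$ is the reproducing kernel of this space.

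The main obstacle is the one subtle point: showing that $k_x$ lies in $\cN^\perp$, so that the inner product can be computed through this particular preimage. This is exactly the identity $\langle n,k_x\rangle=n_1(x)+n_2(x)=0$ for every $n=(n_1,n_2)\in\cN$. The other points needing care are closedness of $\cN$ and attainment of the minimum, both handled above.
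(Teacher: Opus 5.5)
Your proof is correct and is the classical Aronszajn argument: lift to $\cH_1\times\cH_2$, give the sum the norm transported from the orthogonal complement of the closed kernel $\mathcal{N}$ of $(f_1,f_2)\mapsto f_1+f_2$, and observe that $(K_1(x,\cdot),K_2(x,\cdot))\in\mathcal{N}^\perp$. The paper gives no proof of this theorem and cites it from Aronszajn (1950) and Berlinet--Thomas-Agnan, where this same argument appears. Your closing appeal to Moore--Aronszajn uniqueness is harmless but unnecessary, since you already verified membership and the reproducing identity directly.
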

When $\cH_1\cap\cH_2=\{0\}$, the sum is an orthogonal direct sum. This is the case established for the pairs used in the rest of the paper.

A central example throughout this paper is the Sobolev space $H^s(\Omega)$ with $s>d/2$ of functions on $\Omega \subseteq \bbR^d$: by the Sobolev embedding theorem, it embeds continuously into $\cC^0(\Omega)$, and is therefore an RKHS~\cite{Wendland-Book04}.
 
A recurring task in what follows is to approximate a function in $\cH$ from its values on a finite set, and the accuracy of any such approximation is governed by a single geometric quantity, the \emph{power function}. Fix a finite set $W=\{w_1,\dots,w_m\}\subset \Omega$. By the reproducing property~\eqref{EQ:Reproducing}, the kernel section $K(\cdot,x)$ is the Riesz representer of the point-evaluation functional $f\mapsto f(x)$. Let $S_W$ be the (finite-dimensional) kernel subspace associated with $W$, that is
\[
   S_W := \operatorname{span}\{K(\cdot,w):w\in W\}\subset\cH\,.
\]
We write $P_W:\cH\to S_W$ for the $\cH$-orthogonal projection onto it. Then, for a given $f$, $P_W f$ is the minimum-norm element of $\cH$ that interpolates $f$ at the points of $W$ (that is, $S_W$ is the canonical finite-dimensional surrogate for a function known only through its samples on $W$). How faithfully this surrogate reproduces point evaluation at an arbitrary location $x\in\Omega$ is measured by the residual of $K(\cdot,x)$ after projection onto $S_W$, and this residual is the power function \cite{Wendland-Book04}.

\begin{definition}
Let $\cH$ be a RKHS on $\Omega$ with kernel $K$, let $W\subset \Omega$ be finite, and let $S_W$ and $P_W$ be as above. The \emph{power function} of the pair $(K,W)$ is
\[
   P_{K,W}(x) := \bigl\|K(\cdot,x) - P_W K(\cdot,x)\bigr\|_{\cH}, \qquad x\in \Omega\,.
\]
\end{definition}
Therefore, $P_{K,W}(x)$ vanishes on $W$ and grows as $x$ moves away from $W$. It depends only on the kernel $K$ and the geometry of $W$. The relevance of the power function is that, through the reproducing property, it controls the pointwise approximation error from $S_W$ uniformly over $\cH$. This is the standard power-function error bound; see, for instance,~\cite{GolombWeinberger1959,Wendland-Book04}.

\begin{theorem}\label{THM:Power Func}
Let $\cH$ be an RKHS on $\Omega$ with kernel $K$, $W\subset\Omega$ a finite set, and
$P_W$ the orthogonal projection onto $\operatorname{span}\{K(\cdot,w):w\in W\}$.
Then for any $g\in\cH$,
\begin{equation*}
\|g - P_{W} g\|_{L^\infty(\Omega)} \leq \sup_{x\in\Omega}P_{K,W}(x)\cdot\|g\|_{\cH},
\end{equation*}
and the constant $\sup_{x\in\Omega}P_{K,W}(x)$ is optimal.
\end{theorem}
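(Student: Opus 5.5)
The plan is to work pointwise through the reproducing property. Fix $g\in\cH$ and $x\in\Omega$. Since $P_W$ is an $\cH$-orthogonal projection, it is self-adjoint and idempotent, and so is $I-P_W$. Using the reproducing property~\eqref{EQ:Reproducing} twice, I will write
\[
g(x)-(P_Wg)(x)=\langle (I-P_W)g,\,K(\cdot,x)\rangle_{\cH}=\langle (I-P_W)g,\,(I-P_W)K(\cdot,x)\rangle_{\cH}.
\]
Cauchy--Schwarz then bounds this by $\|(I-P_W)g\|_{\cH}\,P_{K,W}(x)$. Since $\|(I-P_W)g\|_{\cH}\le\|g\|_{\cH}$, taking the supremum over $x$ gives the inequality. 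In fact it gives the slightly sharper bound with $\|g-P_Wg\|_{\cH}$ in place of $\|g\|_{\cH}$.

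For optimality I need to show that no constant $C<\sup_x P_{K,W}(x)$ works. Fix $x_0\in\Omega$ with $P_{K,W}(x_0)>0$; if the power function vanishes identically there is nothing to prove. Take the test function $g:=(I-P_W)K(\cdot,x_0)$. Then $P_Wg=0$, so the error at $x_0$ is
\[
g(x_0)=\langle (I-P_W)K(\cdot,x_0),K(\cdot,x_0)\rangle_{\cH}=\|(I-P_W)K(\cdot,x_0)\|_{\cH}^2=P_{K,W}(x_0)^2=P_{K,W}(x_0)\,\|g\|_{\cH}.
\]
Hence $\|g-P_Wg\|_{L^\infty}\ge P_{K,W}(x_0)\|g\|_{\cH}$, and Cauchy--Schwarz is attained at $x_0$. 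Taking $x_0$ along a maximizing sequence shows that the best constant equals $\sup_x P_{K,W}(x)$.

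The main subtlety is the meaning of ``optimal'': the supremum need not be attained, so the argument has to be phrased through approximating points rather than a single maximizer. A secondary point is that $g-P_Wg$ is a genuine function, so its $L^\infty$ norm is a pointwise supremum. This is justified because $\cH$ consists of functions with bounded point evaluation, so there are no null-set issues. Everything else is routine Hilbert-space algebra.
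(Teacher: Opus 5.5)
Your argument is correct and is the standard one behind this result. The paper gives no proof of its own but cites Golomb--Weinberger and Wendland, where the bound comes from the reproducing property, the self-adjointness and idempotence of $I-P_W$, and Cauchy--Schwarz, and optimality comes from the extremal function $(I-P_W)K(\cdot,x_0)$, exactly as you do.

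One small correction concerns reading $\|\cdot\|_{L^\infty(\Omega)}$ as a pointwise supremum, which matters only for the optimality direction. It is not justified by bounded point evaluation as such. It holds because on the arbitrary set $\Omega$ of Section~2 the $L^\infty$ norm simply is the sup norm, or, in a measure setting, because a continuous kernel makes the elements of $\cH$ continuous on the open domain.
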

Therefore, the supremum of the power function, $\sup_{x\in\Omega}P_{K,W}(x)$, quantifies the worst-case approximability of $\cH$ from $S_W$. It is clear that the supremum of $P_{K,W}$ is controlled by, besides the smoothness of the kernel $K$, how dense $W$ is in $\Omega$, which is measured by the \emph{fill distance} of $W$ in $\Omega$, defined as
\begin{equation*}
    h_W := \sup_{x\in \Omega}\inf_{w\in W}\|x-w\|\,.
\end{equation*} 
The following theorem, adapted from~\cite{Wendland-Book04}, makes this dependence quantitative in the two regimes relevant to this paper.
\begin{theorem}[{\cite[Theorems~11.13 and~11.22]{Wendland-Book04}}]\label{THM:Power Func Decay}
Let $\Omega\subset\bbR^d$ be a bounded domain whose boundary is Lipschitz and which satisfies an interior cone condition, and let $K$ be a symmetric positive-definite kernel on $\Omega$ with native space $\cH_K$. Then there exist constants $h_0>0$ and $C>0$, depending only on $\Omega$, $d$, and $K$ but not on the set $W$, such that the following hold for every finite $W\subset\Omega$ with fill distance $h_W\le h_0$.\\[1ex]
(i) If $\cH_K$ embeds continuously into $H^{s}(\Omega)$ for some $s>d/2$, the Sobolev sampling inequality applied to the projection residual gives
\begin{equation}\label{EQ:Power Func Bound Alge}
   \sup_{x\in\Omega} P_{K,W}(x) \;\le\; C\, h_W^{\,s-d/2}.
\end{equation}
(ii) If $\Omega$ is a cube and $K(x,y)=\exp(-\epsilon^2|x-y|^2)$ is a Gaussian kernel with fixed $\epsilon>0$, then for every compact subset $\Omega'\Subset\Omega$ there exist constants $C,c>0$ (depending also on $\Omega'$) such that
\begin{equation}\label{EQ:Power Func Bound Exp}
   \sup_{x\in\Omega'} P_{K,W}(x) \;\le\; C\, e^{-c/h_W}.
\end{equation}
\end{theorem}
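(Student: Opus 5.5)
For part (i), I would use the standard route through a sampling inequality. The first step is to identify the power function as a worst-case interpolation error. The residual $K(\cdot,x)-P_W K(\cdot,x)$ is orthogonal to $S_W$. Hence, by the reproducing property, for every $g\in\cH_K$,
\[
g(x)-(P_Wg)(x)=\langle g,\,K(\cdot,x)-P_WK(\cdot,x)\rangle_{\cH_K}.
\]
Taking $g$ to be the normalized residual itself gives the dual characterization
\[
P_{K,W}(x)=\sup_{\|g\|_{\cH_K}\le1}|g(x)-(P_Wg)(x)|.
\]
It therefore suffices to bound $\|g-P_Wg\|_{L^\infty(\Omega)}$ uniformly over the unit ball of $\cH_K$.

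The function $e:=g-P_Wg$ vanishes on $W$, because $P_Wg$ interpolates $g$ there. Next I would invoke the Sobolev sampling inequality for functions with scattered zeros on a Lipschitz domain with an interior cone condition (Narcowich--Ward--Wendland, Wendland Ch.~11). For $s>d/2$ and $h_W\le h_0$ it gives
\[
\|e\|_{L^\infty(\Omega)}\le C h_W^{s-d/2}|e|_{H^s(\Omega)}.
\]
The continuous embedding $\cH_K\hookrightarrow H^s(\Omega)$ together with the fact that $P_W$ is an orthogonal projection yields
\[
\|e\|_{H^s}\le C_{\rm emb}\|e\|_{\cH_K}\le C_{\rm emb}\|g\|_{\cH_K}\le C_{\rm emb}.
\]
Taking the supremum over $x\in\Omega$ then gives \eqref{EQ:Power Func Bound Alge}.

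The main technical ingredient here is the sampling inequality itself. It is proved by covering $\Omega$ with cones or star-shaped pieces of size comparable to $h_W$ and applying local polynomial reproduction (norming sets) together with a Bramble--Hilbert/Taylor remainder estimate. The step I expect to be hardest is controlling the constants uniformly in $W$. Since this is exactly what Wendland Theorem~11.13 supplies, I would cite it rather than reprove it.

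For part (ii), I would use the Gaussian native space. Its elements extend to entire functions whose Fourier transform satisfies $\hat g\,e^{|\xi|^2/(4\epsilon^2)}\in L^2$, so every $g$ in the unit ball has derivatives bounded by $\|D^\alpha g\|_\infty\le C^{|\alpha|}\,|\alpha|!^{1/2}$ up to constants. On the cube I would approximate $g$ locally by tensor-product polynomials of degree $n\sim c_0/h_W$. Points in $W$ with fill distance $h_W$ form a norming set for this polynomial space on subcubes, with a controlled norming constant, as long as $n h_W$ stays below a fixed threshold. The interpolation error on $\Omega'$ is then bounded by the Lebesgue-constant-weighted Taylor remainder, which is
\[
\lesssim \frac{C^n\,n!^{1/2}\,h^n}{n!}\lesssim e^{-c/h_W}
\]
after optimizing $n$.

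The main obstacle in (ii) is balancing the growth of the norming constant against the analytic decay, and restricting to $\Omega'\Subset\Omega$ handles the boundary effects. This is the content of Wendland Theorem~11.22, which I would again quote. Combining it with the dual characterization of $P_{K,W}$ from part (i) gives \eqref{EQ:Power Func Bound Exp}.
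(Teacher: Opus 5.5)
Your proposal is correct and takes the route the paper indicates. The paper gives no separate proof but cites Wendland: (i) is precisely the Sobolev sampling inequality applied to the projection residual $g-P_Wg$, combined as you do with $P_{K,W}(x)=\sup_{\|g\|_{\cH_K}\le1}|g(x)-(P_Wg)(x)|$ and $\|(I-P_W)g\|_{\cH_K}\le\|g\|_{\cH_K}$, and (ii) is Wendland's Gaussian estimate on cubes, which you also quote. One small slip in your heuristic for (ii): the Taylor factor should involve the size of the local cell carrying the norming set, roughly $(nh_W)^n$ rather than $h_W^n$; this still gives superexponential decay (indeed $e^{-c|\log h_W|/h_W}$), which implies the stated $Ce^{-c/h_W}$.
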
 

\section{Learning linear operators between RKHSs}
\label{SEC:Learning-RKHS}

We now formalize the process of learning operators between RKHSs. This formulation is standard and has been developed for more general settings (see, for instance, ~\cite{BaDaHoOw-JCP24,LiDu-NMPDE26} and references therein), and will be used in later sections to highlight the main point of this work: one can reformulate the learning problem in ``good" RKHSs to make the learning more efficient. 

\subsection{The setup}

We fix discrete sets of sampling points $U=\{u_1,\ldots,u_{m_X}\}\subset\cX$ and $V=\{v_1,\ldots,v_{m_Y}\}\subset\cY$. Associated with $U$, we define the \emph{Gram matrix} $K_{UU}\in\bbR^{m_X\times m_X}$ by $(K_{UU})_{ij}=K_X(u_i,u_j)$, and the  \emph{evaluation vector} $\mathbf{f}_U=[f(u_1),\dots,f(u_{m_X})]^\top$ for any $f\in\cH_X$. 

Throughout the rest of the work, we assume the following about the operator $T$, its input space, the kernels, and the sampling points. 
\begin{itemize}[leftmargin=3.5em, itemsep=2pt, labelsep=0.4em]

\item[\hypertarget{Assum-A1}{\Ai\,\,}] 
Let $\cM\subset\cH_X$ denote the collection of source terms $f$. $\cM$ is bounded in $\cH_X$ in the sense that there exists $C_f>0$ such that $\|f\|_{\cH_X}\le C_f$ for all $f\in\cM$.

\item[\hypertarget{Assum-A2}{\Aii\,\,}] The operator $T:\cH_X\to\cH_Y$ satisfies the following conditions on $\cM$:
\begin{enumerate}[label=(\roman*)]
    \item \textbf{Boundedness:} $M_Y:=\sup_{f\in\cM}\|T(f)\|_{\cH_Y}<\infty$.
    \item \textbf{$L^\infty$-Lipschitz continuity:} There exists a constant $L_T>0$ such that
    \begin{equation*}
        \|T(f_1)-T(f_2)\|_{L^\infty(\cY)} \le L_T\|f_1-f_2\|_{L^\infty(\cX)},
        \quad \forall\, f_1,f_2\in \cB_{C_f},
    \end{equation*}
    with $\cB_{C_f}:=\{f\in\cH_X:\|f\|_{\cH_X}\le C_f\}$. 
\end{enumerate}

\item[\hypertarget{Assum-A3}{\Aiii\,\,}]  The kernels $K_X$ and $K_Y$ are strictly positive-definite, and the sampling points in $U$ and $V$ are distinct.

\item[\hypertarget{Assum-A4}{\Aiv\,\,}] 
Let $h_X$ and $h_Y$ be the fill-distances of $U\subset\cX$ and $V\subset\cY$, respectively. The power functions associated with $(K_X, U)$ and $(K_Y, V)$ satisfy
\begin{equation*}
\sup_{x\in\cX} P_{K_X,U}(x)\le\eps_X(h_X), 
\qquad 
\sup_{y\in\cY} P_{K_Y,V}(y)\le\eps_Y(h_Y),
\end{equation*}
with functions $\eps_X,\eps_Y:[0,\infty)\to[0,\infty)$ nondecreasing and satisfying $\eps_X(h),\eps_Y(h)\to 0$ as $h\to 0$.

\end{itemize}
Under the assumption~\hyperlink{Assum-A3}{\Aiii}, the Gram matrices $K_{UU}$ and $K_{VV}$ are strictly positive-definite. This 
positive-definiteness assumption implies the invertibility, which is an idealization adopted for clarity of exposition. In practice, the Gram matrix becomes increasingly ill-conditioned as the number of sampling points grows, and the encoders are stabilized by Tikhonov regularization.

With this setup, we define the kernel subspace  $S_X=\operatorname{span}\{K_X(\cdot,u_i)\}_{i=1}^{m_X}\subset\cH_X$ and the orthogonal projection onto $S_X$ as
\begin{equation*}
P_Xf=\sum_{i=1}^{m_X}(\bc_X)_i\, K_X(\cdot,u_i), \qquad \bc_X=K_{UU}^{-1}\mathbf{f}_U,
\end{equation*}
where the coefficient formula $\bc_X = K_{UU}^{-1}\mathbf{f}_U$ follows from the reproducing property and the normal equations for orthogonal projection onto $S_X$. Analogously, $V$ defines the Gram matrix $K_{VV}\in\bbR^{m_Y\times m_Y}$ with $(K_{VV})_{ij}=K_Y(v_i,v_j)$, the evaluation vector $\mathbf{g}_V=[g(v_1),\dots,g(v_{m_Y})]^\top$, and the subspace $S_Y=\operatorname{span}\{K_Y(\cdot,v_j)\}_{j=1}^{m_Y}\subset\cH_Y$ with orthogonal projection $P_Yg=\sum_j(\mathbf{d}_Y)_j K_Y(\cdot,v_j)$, $\mathbf{d}_Y=K_{VV}^{-1}\mathbf{g}_V$.

We represent these coefficient maps directly as operators. Specifically, the \emph{encoding operator} $\cP_X:\cH_X\to\bbR^{m_X}$ is defined as $\cP_X(f) :=\bc_X$. The \emph{decoding operator} $\cR_Y:\bbR^{m_Y}\to S_Y\subset\cH_Y$ mapping the coefficient vector back to a function in $\cH_Y$ is defined by
\begin{equation}
   \cR_Y(\mathbf{d}_Y):= \sum_{j=1}^{m_Y}(\mathbf{d}_Y)_j\, K_Y(\cdot,v_j), \qquad \mathbf{d}_Y\in \bbR^{m_Y}.
\end{equation}

Given a dataset $\{(f_\ell, u_\ell)\}_{\ell=1}^N$, we encode each pair to the coefficient vectors $\bc_X^\ell=\cP_X(f_\ell)$ and $\mathbf{d}_Y^\ell=K_{VV}^{-1} \mathbf{u}_{V}^\ell$, and fit the coordinate map $\bc_X\mapsto\mathbf{d}_Y$. Every operator learned in this paper is linear, so the coordinate map is itself linear and is fitted by a matrix  $\bM\in\bbR^{m_Y\times m_X}$.  The full approximate operator is then given by
\begin{equation}\label{EQ:Approx Opera}
    \wh T = \cR_Y \circ \bM \circ \cP_X.
\end{equation}
The linearity means that the error analysis has only two contributions: the projection of the output onto $S_Y$, $\cE_{\rm out}$, and the projection of the input onto $S_X$, $\cE_{\rm in}$. Indeed, define the
finite-dimensional target matrix (using the input-side decoding operator
$\cR_X(\bc):=\sum_{i=1}^{m_X}c_i\,K_X(\cdot,u_i)$)
\begin{equation}\label{EQ:GStar}
   \bM^\star:=K_{VV}^{-1}\bA,\qquad
   A_{ki}=T(K_X(\cdot,u_i))(v_k),
\end{equation}
so that, by linearity of $T$ and $\cR_X$,
$\cR_Y\bM^\star\cP_X=P_YTP_X$ exactly: the matrix class represents the projected operator with no residual. With $\bM=\bM^\star$, the
triangle inequality gives, for any $f\in\cM$,
\begin{equation}\label{EQ:Error Decomp}
\|T(f) - \wh T(f)\|_{L^\infty(\cY)}
\le \underbrace{\|T(f) - P_Y T(f)\|_{L^\infty(\cY)}}_{\cE_{\mathrm{out}}(f)} + \underbrace{\|P_Y T(f) - P_Y T P_X (f)\|_{L^\infty(\cY)}}_{\cE_{\mathrm{in}}(f)}.
\end{equation}
The two terms are bounded in Proposition~\ref{PROP:Output Error} and
Proposition~\ref{PROP:Input Error}. Whether the data determine $\bM^\star$ is the identifiability question settled in
Section~\ref{SUBSEC:Linear Identifiability}.

We measure the approximation errors in the $L^\infty$ norm, a choice that is natural in the RKHS setting for two reasons. First, the power function controlling the projection errors below is defined through pointwise kernel approximation and is therefore intrinsically uniform. The reproducing property further ensures that RKHS bounds transfer to $L^\infty$ bounds without loss. Second, different operators and approximation schemes typically live in different RKHSs, and $L^\infty$ provides a method-agnostic norm in which their guarantees remain comparable.

\subsection{The general error bounds}
 
Applying Theorem~\ref{THM:Power Func} to the output and input spaces directly yields the following two bounds.
\begin{proposition}\label{PROP:Output Error}
Under assumptions~\hyperlink{Assum-A1}{\Ai}--\hyperlink{Assum-A4}{\Aiv}, for any $f\in\cM$,
\begin{equation}\label{EQ:Output Error}
\cE_{\mathrm{out}}(f) \le M_Y\,\eps_Y(h_Y)\,.
\end{equation}
\end{proposition}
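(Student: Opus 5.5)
The bound follows by applying Theorem~\ref{THM:Power Func} in the output space $\cH_Y$ with the sampling set $V$, and then using the two uniform assumptions to make the constants independent of $f$.

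Fix $f\in\cM$ and set $g:=T(f)$. By \hyperlink{Assum-A2}{\Aii}(i), $g\in\cH_Y$ with $\|g\|_{\cH_Y}\le M_Y$. By \hyperlink{Assum-A3}{\Aiii}, $K_{VV}$ is invertible, so $P_Y$ is the $\cH_Y$-orthogonal projection onto $S_Y=\operatorname{span}\{K_Y(\cdot,v_j)\}_{j=1}^{m_Y}$. This is exactly the projection $P_W$ of Theorem~\ref{THM:Power Func}, with kernel $K_Y$, domain $\cY$ and $W=V$.

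Theorem~\ref{THM:Power Func} then gives
\[
\cE_{\rm out}(f)=\|g-P_Yg\|_{L^\infty(\cY)}\le \sup_{y\in\cY}P_{K_Y,V}(y)\,\|g\|_{\cH_Y}.
\]
Bounding the power function by $\eps_Y(h_Y)$ via \hyperlink{Assum-A4}{\Aiv}, and the norm by $M_Y$, yields \eqref{EQ:Output Error}.

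There is no real obstacle in this argument. The only point to check is that the coefficient formula $\mathbf{d}_Y=K_{VV}^{-1}\mathbf{g}_V$ defines the orthogonal projection. This holds because the normal equations $\langle g-P_Yg,K_Y(\cdot,v_k)\rangle_{\cH_Y}=0$ reduce, by the reproducing property, to the interpolation conditions $(P_Yg)(v_k)=g(v_k)$, which is the same as $K_{VV}\mathbf{d}_Y=\mathbf{g}_V$. Assumption \hyperlink{Assum-A1}{\Ai} is not needed here beyond guaranteeing that $\cM$ is the set over which $M_Y$ is taken.
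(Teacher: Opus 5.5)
Your proof is correct and follows the paper's argument: apply Theorem~\ref{THM:Power Func} to $T(f)$ with sampling set $V$, then bound the power function by $\eps_Y(h_Y)$ using \Aiv{} and $\|T(f)\|_{\cH_Y}$ by $M_Y$ using \Aii(i). Your additional check that the coefficients $K_{VV}^{-1}\mathbf{g}_V$ give the orthogonal projection is valid, and the paper already states it in its setup.
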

\begin{proof}
Since $f \in \cM$, assumption~\hyperlink{Assum-A2}{\Aii} gives $T(f) \in \cH_Y$ with
$\|T(f)\|_{\cH_Y} \le M_Y$. We then apply Theorem~\ref{THM:Power Func} to $T(f)$ with the sampling set $V$ to get
\[
\|T(f) - P_Y T(f)\|_{L^\infty(\cY)}
\le \Bigl(\sup_{y\in\cY} P_{K_Y,V}(y)\Bigr)\|T(f)\|_{\cH_Y}
\le \eps_Y(h_Y)\, M_Y\,
\]
which is the desired result.
\end{proof}

\begin{proposition}\label{PROP:Input Error}
Under assumptions~\hyperlink{Assum-A1}{\Ai}--\hyperlink{Assum-A4}{\Aiv}, for any $f\in\cM$, we have
\begin{equation}\label{EQ:Input Error}
\cE_{\mathrm{in}}(f)
\le C_P L_T C_f\,\eps_X(h_X),
\end{equation}
where $C_P := \|P_Y\|_{L^\infty(\cY)\to L^\infty(\cY)}$ is the 
$L^\infty$-stability constant of $P_Y$, while $L_T$ is the constant from assumption~\hyperlink{Assum-A2}{\Aii} and $C_f$ that from assumption~\hyperlink{Assum-A1}{\Ai}.
\end{proposition}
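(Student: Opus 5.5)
The plan is a three-step chain of inequalities: pull $P_Y$ out using its $L^\infty$-stability, reduce to the input projection error via the Lipschitz property of $T$, and then bound that error with the power function on the input side. Write
\[
\cE_{\mathrm{in}}(f)=\|P_Y\bigl(T(f)-T(P_Xf)\bigr)\|_{L^\infty(\cY)}\le C_P\,\|T(f)-T(P_Xf)\|_{L^\infty(\cY)},
\]
where we use that $P_Y$ is linear and that $C_P$ is by definition its $L^\infty(\cY)\to L^\infty(\cY)$ operator norm. Note that $P_Y$ is defined only on $\cH_Y$. Its $L^\infty$-stability should be read through the interpolation formula $P_Yg=\sum_j (K_{VV}^{-1}\mathbf g_V)_jK_Y(\cdot,v_j)$, which depends only on the samples $g(v_j)$. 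We apply it to $g=T(f)-T(P_Xf)\in\cH_Y$.

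Next we invoke the Lipschitz condition in~\hyperlink{Assum-A2}{\Aii}(ii). This requires both $f$ and $P_Xf$ to lie in $\cB_{C_f}$. For $f$ this is \hyperlink{Assum-A1}{\Ai}. For $P_Xf$ it follows because $P_X$ is an $\cH_X$-orthogonal projection, so $\|P_Xf\|_{\cH_X}\le\|f\|_{\cH_X}\le C_f$. Hence
\[
\|T(f)-T(P_Xf)\|_{L^\infty(\cY)}\le L_T\,\|f-P_Xf\|_{L^\infty(\cX)}.
\]

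Finally, Theorem~\ref{THM:Power Func} applied in $\cH_X$ with sampling set $U$ gives
\[
\|f-P_Xf\|_{L^\infty(\cX)}\le \sup_{x\in\cX}P_{K_X,U}(x)\,\|f\|_{\cH_X}.
\]
Assumption~\hyperlink{Assum-A4}{\Aiv} bounds the supremum by $\eps_X(h_X)$, and \hyperlink{Assum-A1}{\Ai} bounds $\|f\|_{\cH_X}$ by $C_f$. Chaining the three inequalities yields $\cE_{\mathrm{in}}(f)\le C_PL_TC_f\,\eps_X(h_X)$.

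The only delicate step is verifying the hypotheses of the Lipschitz condition, specifically that $P_Xf$ remains in the ball $\cB_{C_f}$. The norm-nonincreasing property of orthogonal projections settles this. The finiteness of $C_P$ is taken as given, as in the statement.
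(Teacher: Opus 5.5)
Your proposal is correct and follows the paper's proof essentially step for step: the $L^\infty$-stability of $P_Y$, then the Lipschitz bound of (A-ii), then Theorem~\ref{THM:Power Func} combined with (A-iv) and (A-i). Your explicit check that $\|P_Xf\|_{\cH_X}\le\|f\|_{\cH_X}\le C_f$, so that $P_Xf\in\cB_{C_f}$ and the Lipschitz condition applies, is a detail the paper leaves implicit.
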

\begin{proof}
By the $L^\infty$ stability of $P_Y$ and the Lipschitz continuity of $T$
(in assumption~\hyperlink{Assum-A2}{\Aii}), we have
\[
\|P_Y T(f) - P_Y T(P_X f)\|_{L^\infty(\cY)}
\le C_P\, \|T(f) - T(P_X f)\|_{L^\infty(\cY)}
\le C_P L_T\, \|f - P_X f\|_{L^\infty(\cX)}.
\]
We then apply Theorem~\ref{THM:Power Func} to $f$ with sampling set $U$ and
using $\|f\|_{\cH_X} \le C_f$ (assumption~\hyperlink{Assum-A1}{\Ai}) to have
\[
\|f - P_X f\|_{L^\infty(\cX)} \le \eps_X(h_X)\,\|f\|_{\cH_X} \le \eps_X(h_X)\, C_f.
\]
Combining the two inequalities gives the result.
\end{proof}

\begin{remark}
The constant $C_P=\|P_Y\|_{L^\infty(\cY)\to L^\infty(\cY)}$ entering Proposition~\ref{PROP:Input Error} is the Lebesgue constant of kernel interpolation on $V$. The orthogonal projection $P_Y$ coincides with the kernel interpolant at $V$: the residual $g-P_Yg$ is orthogonal to each $K_Y(\cdot,v_j)$, so by the reproducing property $g(v_j)=(P_Yg)(v_j)$ for all $j$. Writing $P_Yg=\sum_{j=1}^{m_Y}g(v_j)\,\chi_j$ in the cardinal basis $\chi_j\in S_Y$ with $\chi_j(v_k)=\delta_{jk}$, we obtain
\begin{equation*}
   C_P=\sup_{y\in\cY}\sum_{j=1}^{m_Y}|\chi_j(y)| .
\end{equation*}
Thus $C_P$ depends on $K_Y$ and $V$, not on the function being approximated, and is finite for fixed nodes when the cardinal functions are bounded. When stating rates with mesh-independent prefactors, we assume that the chosen interpolation family has $C_P\le C_{\mathrm{stab}}$ uniformly. Otherwise the factor $C_P$ must be retained, and convergence of the input term requires $C_P(h_Y)\eps_X(h_X)\to0$. Corollary~\ref{COR:Global Rates} below avoids this stability factor for the harmonic extension by using its boundedness between the chosen RKHSs.
\end{remark}

Combining the two propositions, we see that, under assumptions~\hyperlink{Assum-A1}{\Ai}-\hyperlink{Assum-A4}{\Aiv}, for any $f\in\cM$, the surrogate~\eqref{EQ:Approx Opera} with the exact matrix
$\bM=\bM^\star$ of~\eqref{EQ:GStar} obeys
\begin{equation}\label{EQ:Error Summary}
\|T(f)-\wh T(f)\|_{L^\infty(\cY)}
\;\le\;
M_Y\,\eps_Y(h_Y)
\;+\;
C_P L_T C_f\,\eps_X(h_X)\,.
\end{equation}

Let us emphasize again that the bound~\eqref{EQ:Error Summary} is an \emph{a priori approximation} bound: it controls the error of the best surrogate $\wh T=\cR_Y\circ \bM \circ\cP_X$ representable by the matrix class, and it is consequently free of the sample size $N$ of~\eqref{EQ:Data}. Indeed, the sampling sets $U,V$ are design choices made independently of the data, and the matrix $\bM^\star$ exists independently of the data.  The data enter only in fitting $\bM$, which the framework deliberately treats as a black box. If we write $\wh T_N$ for the surrogate actually fitted from $N$ pairs, the full error splits as
\[
   \|T-\wh T_N\|
   \;\le\;\underbrace{\|T-\wh T\|}_{\text{approximation}}
   \;+\;\underbrace{\|\wh T-\wh T_N\|}_{\text{estimation}},
\]
together with optimization and, for noisy data, variance contributions. Only the first term is analyzed here. Controlling the second is a statistical question, for which sample-complexity results of the type in~\cite{BaChHoOwSt-JCP25,JaOsHsHoOw-arXiv25,BoKiShTo-JMLR22} would be required and is left to future work.

\subsection{Identifiability from data}
\label{SUBSEC:Linear Identifiability}

The bound~\eqref{EQ:Error Summary} concerns the approximation error of
$P_YTP_X$, which is represented exactly by the matrix
$\bM^\star=K_{VV}^{-1}\bA$ defined in~\eqref{EQ:GStar}. Whether
$\bM^\star$ can be \emph{recovered from data} is a separate question, and it is the one point at which the sample size $N$ of~\eqref{EQ:Data} enters the
theory explicitly.
\begin{proposition}\label{PROP:Linear Identifiability}
Let $\bM^\star:=K_{VV}^{-1}\bA$ be the target matrix of~\eqref{EQ:GStar}, and let the pairs $\{(f_\ell, u_\ell)\}_{\ell=1}^N$ be noiseless, with sources
$f_\ell\in S_X=\operatorname{span}\{K_X(\cdot,u_i)\}_{i=1}^{m_X}$. Then $\bM^\star$ is determined by the data
if and only if
\begin{equation}\label{EQ:Design Rank}
   \operatorname{rank}\bigl[\,\cP_Xf_1,\ \dots,\ \cP_Xf_N\,\bigr]=m_X,
\end{equation}
which in particular requires $N\ge m_X$. Moreover, the minimal design is explicit: taking $N=m_X$ with $f_\ell=K_X(\cdot,u_\ell)$ gives $\cP_Xf_\ell=e_\ell$, so that $\mathbf{d}_Y^\ell$ is precisely the $\ell$-th column of $\bM^\star$ and the operator is recovered in a single pass.
\end{proposition}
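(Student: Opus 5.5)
The plan is to reduce identifiability to a linear system in the unknown matrix. First I will show that each noiseless data pair determines the action of $\bM^\star$ on one coefficient vector. Since $f_\ell\in S_X$, the input projection is trivial, $P_Xf_\ell=f_\ell$, so $\cR_X\cP_Xf_\ell=f_\ell$. The exactness identity $\cR_Y\bM^\star\cP_X=P_YTP_X$ then gives $\cR_Y\bM^\star\cP_Xf_\ell=P_YTf_\ell=P_Yu_\ell$.

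Next I use that $P_Y$ is the kernel interpolant at $V$, as noted in the remark after Proposition~\ref{PROP:Input Error}. Its coefficient vector is therefore $K_{VV}^{-1}\mathbf{u}_V^\ell=\mathbf{d}_Y^\ell$. Under \hyperlink{Assum-A3}{\Aiii} the functions $K_Y(\cdot,v_j)$ are linearly independent, so $\cR_Y$ is injective, and we conclude
\[
\bM^\star\bc_X^\ell=\mathbf{d}_Y^\ell,\qquad \ell=1,\dots,N.
\]
It is cleaner to verify this directly from~\eqref{EQ:GStar}. Write $f_\ell=\sum_i(\bc_X^\ell)_iK_X(\cdot,u_i)$. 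Linearity of $T$ gives $\mathbf{u}_V^\ell=\bA\bc_X^\ell$, hence $\mathbf{d}_Y^\ell=K_{VV}^{-1}\bA\bc_X^\ell=\bM^\star\bc_X^\ell$. Stacking the columns yields the matrix equation $\bM^\star C=D$, with $C=[\bc_X^1,\dots,\bc_X^N]\in\bbR^{m_X\times N}$ and $D$ the data matrix.

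The equivalence then follows from linear algebra. The data determine $\bM^\star$ precisely when $\bM^\star$ is the unique solution $\bM$ of $\bM C=D$. Equivalently, the only $\bM$ with $\bM C=0$ is $\bM=0$, that is, $\bM$ restricted to $\operatorname{ran}C$ vanishes only trivially. This holds iff $\operatorname{ran}C=\bbR^{m_X}$, i.e.\ iff~\eqref{EQ:Design Rank} holds.

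For the "only if" direction, suppose the rank is deficient. Pick a nonzero $z\perp\operatorname{ran}C$ and any nonzero $w\in\bbR^{m_Y}$. Then $\bM^\star+wz^\top$ fits the same data, so $\bM^\star$ is not determined. Since $C$ has $N$ columns, its rank is at most $N$, which forces $N\ge m_X$. In the full-rank case the recovery is explicit: $C$ has a right inverse, and $\bM^\star=DC^\top(CC^\top)^{-1}$.

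For the minimal design, take $N=m_X$ and $f_\ell=K_X(\cdot,u_\ell)$. Then $\cP_Xf_\ell=K_{UU}^{-1}(K_{UU})_{\cdot\ell}=e_\ell$, so $C=I$ and $\mathbf{d}_Y^\ell=\bM^\star e_\ell$ is the $\ell$-th column of $\bM^\star$.

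The main point requiring care is the precise meaning of "determined by the data". I will interpret it as uniqueness among all matrices consistent with the encoded pairs. I will also note that the injectivity of $\cR_Y$ is what allows identities between functions to be transferred to identities between coefficient vectors. Beyond that, the argument is routine.
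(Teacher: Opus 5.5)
Your proposal is correct and follows essentially the same route as the paper: since $f_\ell\in S_X$, noiseless data give the linear constraints $\mathbf{d}_Y^\ell=\bM\,\cP_Xf_\ell$; uniqueness holds iff the encoded inputs span $\bbR^{m_X}$; and the probing design yields $\cP_Xf_\ell=e_\ell$. Your direct derivation $\mathbf{u}_V^\ell=\bA\bc_X^\ell$ from~\eqref{EQ:GStar}, the explicit non-uniqueness witness $\bM^\star+wz^\top$ in the rank-deficient case, and the formula $\bM^\star=DC^\top(CC^\top)^{-1}$ spell out steps the paper's proof leaves implicit.
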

\begin{proof}
For $f_\ell\in S_X$ one has $P_Xf_\ell=f_\ell$, so encoding the output gives exactly the linear constraint $\mathbf{d}_Y^\ell=\bM\,\cP_Xf_\ell$. The rows of $\bM$ are determined exactly when the vectors $\{\cP_Xf_\ell\}_{\ell=1}^N$ span $\bbR^{m_X}$, which is~\eqref{EQ:Design Rank} and forces $N\ge m_X$. For the minimal design, $f_\ell=K_X(\cdot,u_\ell)$ gives $(\mathbf{f}_{\ell,U})_j=K_X(u_j,u_\ell)=(K_{UU})_{j\ell}$, i.e.\ $\mathbf{f}_{\ell,U}=K_{UU}e_\ell$,
whence $\cP_Xf_\ell=K_{UU}^{-1}K_{UU}e_\ell=e_\ell$ and $\mathbf{d}_Y^\ell=\bM^\star e_\ell$.
\end{proof} 

Condition~\eqref{EQ:Design Rank} ensures that the data uniquely determine $\bM^\star$. The learned operator reproduces $P_YTP_Xf$, with only output error
for $f\in S_X$ and an additional input error otherwise. This is consistent with the numerical observation in Section~\ref{SEC:Numer} that the error curves level off on a plateau set by the projection and discretization errors rather than decreasing to zero. Once it holds, further samples do not improve the bound~\eqref{EQ:Error Summary}, which is why $N$ is absent from it. The probing design $f_i=K_X(\cdot,u_i)$ also has the best possible conditioning, since the encoded inputs form the standard basis.

\section{Kernel decomposition for elliptic PDE}
\label{SEC:Elliptic} 

We now specialize the abstract RKHS framework of
Section~\ref{SEC:Learning-RKHS} to a concrete PDE family, where its rates become fully explicit, and its structure can be exploited. We work directly with
the general self-adjoint, uniformly elliptic, divergence-form operator in~\eqref{EQ:Diff} under the homogeneous Dirichlet boundary condition. The Green's function of $\cL$ decomposes analytically into a universal singular part and a domain-dependent corrector.

\subsection{The energy inner product}
\label{SUBSEC:PDE Setup}

The Dirichlet Green's function $G(\cdot,y)$ solves
\begin{equation}\label{EQ:Green Func Def}
   \cL_x G(x,y)=\delta(x-y)\ \ \text{in }\Omega,
   \qquad G(x,y)=0\ \ \text{on }\partial\Omega .
\end{equation}
Because $\cL$ is self-adjoint, $G$ is symmetric, $G(x,y)=G(y,x)$, and positive by
the maximum principle. It is the kernel associated with the operator $\cL^{-1}$. Moreover, it is positive-definite 
 in the sense that: $$\int_\Omega\int_\Omega f(x)\,G(x,y)\,f(y)\,dxdy=\langle\cL^{-1}f,f\rangle_{L^2(\Omega)}\ge0, 
 \quad \forall f\in L^2(\Omega).
 $$
Note that, for $d\ge2$, the kernel $G$ is not positive-definite in the pointwise sense of the Definition~\ref{DEF:PD Kernel}, since $G(x,x)=+\infty$. 
  
  Standard elliptic regularity~\cite{GiTr-Book00} gives $Tf\in H^{s+2}(\Omega)\cap H_0^1(\Omega)$ with
$\|Tf\|_{H^{s+2}}\lesssim\|f\|_{H^s}$. To guarantee bounded pointwise evaluation we require $s>d/2$ (Sobolev embedding into $\cC^0$, so that $H^s(\Omega)$ and
$H^{s+2}(\Omega)$ are RKHSs~\cite{BeTh-Book11,Wendland-Book04}). We set
\begin{equation*}
   \cH_{\mathrm{src}}:=H^s(\Omega),\qquad
   \cH_{\mathrm{sol}}:=H^{s+2}(\Omega)\cap H_0^1(\Omega),\qquad s>\tfrac{d}{2},
\end{equation*}
so that $T:\cH_{\mathrm{src}}\to\cH_{\mathrm{sol}}$ is a bounded linear operator between RKHSs.

Let us introduce the standard energy inner product, $a(u,v)$ (and the corresponding norm $\|\cdot\|_a$), of $\cL$ as
\begin{equation}\label{EQ:Energy IP}
   a(u,v):=\int_\Omega \bigl(\gamma\,\nabla u\cdot\nabla v + \sigma\,u\,v\bigr)\,dx ,
   \quad \|u\|_a^2:=a(u,u), \quad
   \forall\, u,v\in H^1(\Omega).
\end{equation}
By uniform ellipticity~\eqref{EQ:Coeff Cond}, $\sigma\ge0$, and the Poincar\'e inequality, $a(\cdot,\cdot)$ is an inner product on $H_0^1(\Omega)$ equivalent to the standard one. Throughout, $\Phi$ denotes
the \emph{Laplace} fundamental solution satisfying $-\Delta\Phi=\delta$ in $\bbR^d$:
\begin{equation}\label{EQ:Laplace Fundamental Sol}
   \Phi(x,y)=\begin{cases}
   -\dfrac{1}{2\pi}\ln|x-y|, & d=2,\\[1.5ex]
   \dfrac{1}{d(d-2)\alpha(d)}\dfrac{1}{|x-y|^{d-2}}, & d\ge3,
   \end{cases}
\end{equation}
with $\alpha(d)$ the volume of the unit ball in $\bbR^d$. For notation simplicity, we write $\Phi(x-y)$ interchangeably with $\Phi(x,y)$.

\begin{lemma}\label{LEM:Poisson Assumptions}
Let $\cM=\{f\in\cH_{\mathrm{src}}:\|f\|_{H^s(\Omega)}\le R\}$.
Then $T=\cL^{-1}$ satisfies assumptions~\hyperlink{Assum-A1}{\Ai} and~\hyperlink{Assum-A2}{\Aii} with
\[
M_Y=M_{\mathrm{sol}}:=\sup_{f\in\cM}\|Tf\|_{H^{s+2}(\Omega)}\le C_{\mathrm{ell}}\,R,
   \qquad 0<
   L_T=\|\tau\|_{L^\infty(\Omega)}<\infty,
\]
where $\tau$ is the torsion function solving 
\begin{equation}\label{EQ:Torsion}
\cL\tau=1\; \text{ in }\; \Omega\quad\text{ and }\quad \tau|_{\partial\Omega}=0.
\end{equation}
\end{lemma}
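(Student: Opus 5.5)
We check the three ingredients in turn: \Ai{} holds by construction, the bound on $M_Y$ follows from elliptic regularity, and the Lipschitz constant in \Aii{} is identified through the maximum principle. For \Ai, the set $\cM$ is the closed ball of radius $R$ in $\cH_{\mathrm{src}}=H^s(\Omega)$, so we may take $C_f=R$. For the boundedness part of \Aii, we invoke the elliptic regularity estimate $\|Tf\|_{H^{s+2}(\Omega)}\le C_{\mathrm{ell}}\|f\|_{H^s(\Omega)}$. This estimate is valid because $\partial\Omega$ and the coefficients $\gamma,\sigma$ are analytic (hence smooth) and \eqref{EQ:Coeff Cond} holds, with uniqueness of the solution coming from Lax--Milgram and the coercivity of $a(\cdot,\cdot)$ on $H_0^1(\Omega)$. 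Taking the supremum over $\cM$ gives $M_{\mathrm{sol}}\le C_{\mathrm{ell}}R$. Since $Tf\in H_0^1(\Omega)$ and $s+2>d/2$, we also have $Tf\in\cH_{\mathrm{sol}}$.

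The substantive step is the $L^\infty$-Lipschitz bound with the sharp constant $\|\tau\|_{L^\infty}$. By linearity it suffices to show $\|Tg\|_{L^\infty}\le\|\tau\|_{L^\infty}\|g\|_{L^\infty}$ for $g=f_1-f_2\in H^s(\Omega)\subset \cC^0(\overline\Omega)$. Put $M=\|g\|_{L^\infty}$ and consider $w_\pm:=M\tau\mp Tg$. Then $\cL w_\pm=M\mp g\ge0$ in $\Omega$ and $w_\pm=0$ on $\partial\Omega$. The weak maximum principle for $\cL=-\nabla\cdot(\gamma\nabla)+\sigma$ with $\sigma\ge0$ then gives $w_\pm\ge0$. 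Equivalently, one can use positivity of the Green's function, $G\ge0$: writing $|Tg(x)|=|\int_\Omega G(x,y)g(y)\,dy|\le M\int_\Omega G(x,y)\,dy=M\tau(x)$. Either route yields $|Tg|\le M\tau$ pointwise, hence the Lipschitz estimate with $L_T=\|\tau\|_{L^\infty}$. The estimate holds on all of $H^s$, in particular on $\cB_{C_f}$. Equality is attained for $g\equiv1$, which confirms that this constant is the operator norm of $T$ on $L^\infty$.

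It remains to verify $0<\|\tau\|_{L^\infty}<\infty$. Finiteness follows from elliptic regularity applied to the constant source $1\in H^s(\Omega)$: this gives $\tau\in H^{s+2}(\Omega)\hookrightarrow \cC^0(\overline\Omega)$. Positivity holds because $\tau\ge0$ by the maximum principle, and $\tau\not\equiv0$ since $\cL\tau=1\neq0$.

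The only real obstacle is justifying the maximum principle at the needed regularity. I would handle it in the weak form: test $a(w,v)=\int_\Omega(M\mp g)v\,dx$ with $v=w^-\in H_0^1(\Omega)$. Using $\sigma\ge0$, this gives $-a(w^-,w^-)\ge0$, so $w^-=0$. This avoids any classical $\cC^2$ requirement.
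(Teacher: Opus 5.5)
Your proof is correct, but your main argument for the Lipschitz bound differs from the paper's. The paper works purely through the kernel. By linearity and $G\ge0$ it writes $|Tg(x)|\le\|g\|_{L^\infty}\int_\Omega G(x,y)\,dy$, then identifies $\int_\Omega G(\cdot,y)\,dy$ with the torsion function $\tau$; this is the ``equivalent'' route you mention only in passing. Your primary route instead compares $Tg$ with the barriers $M\tau\mp Tg$ and proves the weak maximum principle directly in $H_0^1(\Omega)$ by testing with $w^-$.

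What your route buys is an argument that never touches the singular kernel. You need neither the representation $Tg=\int_\Omega G(\cdot,y)g(y)\,dy$ nor pointwise positivity of $G$. The latter, since $G(x,\cdot)\notin H^1(\Omega)$ for $d\ge2$, must itself be derived from the maximum principle via an approximation of $\delta_x$. You also verify two things the paper asserts without proof: $L_T>0$ (since $\tau\ge0$ and $\tau\not\equiv0$), and sharpness of the constant via $T1=\tau$. In addition, you obtain finiteness of $\|\tau\|_{L^\infty}$ from $H^{s+2}(\Omega)\hookrightarrow\cC^0(\overline\Omega)$ rather than from an unspecified maximum-principle bound.

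The paper's version is a one-liner given the Green's representation \eqref{EQ:Diff Sol}. It is also the form reused later, e.g.\ $\sup_x\int_\Omega G_*(x,y)\,dy=\|\tau_{\sigma_*}\|_{L^\infty}$ in Lemma~\ref{LEM:Chord Contraction}.

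A minor precision: the identity $a(w,w^-)=-a(w^-,w^-)$ holds regardless of the sign of $\sigma$. What $\sigma\ge0$ provides, together with $\gamma\ge\gamma_0$ and Poincar\'e, is that $a(w^-,w^-)\le0$ forces $w^-=0$.
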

\begin{proof}
The boundedness is a standard elliptic estimate $\|Tf\|_{H^{s+2}}\le C_{\mathrm{ell}}\|f\|_{H^s}$,
leading to $M_{\mathrm{sol}}\le C_{\mathrm{ell}}R$. For the $L^\infty$-Lipschitz
bound, $T$ is linear, so for $f_1,f_2\in\cM$ and $g:=f_1-f_2$,
\begin{equation}
   \|Tf_1-Tf_2\|_{L^\infty(\Omega)}
   =\Bigl\|\int_\Omega G(\cdot,y)\,g(y)\,dy\Bigr\|_{L^\infty(\Omega)}
   \le \|g\|_{L^\infty(\Omega)}\,\sup_{x\in\Omega}\int_\Omega G(x,y)\,dy,
\end{equation}
where $G\ge0$ by the maximum principle (valid since $\sigma\ge0$). 

The function
$\tau(x):=\int_\Omega G(x,y)\,dy$ is exactly the solution of \eqref{EQ:Torsion}, so
\[\sup_{x\in\Omega}\int_\Omega G(x,y)\,dy=\|\tau\|_{L^\infty(\Omega)}=:L_T,\]
which is finite by the maximum principle. Since $\cM$ is a bounded ball, the assumption~\hyperlink{Assum-A1}{\Ai} holds by definition.
\end{proof}

Applying the algebraic power-function estimate~\eqref{EQ:Power Func Bound Alge} to the input and output RKHSs gives
\[
   \eps_X(h_X)\le C_X h_X^{\,s-d/2},
   \qquad
   \eps_Y(h_Y)\le C_Y h_Y^{\,s+2-d/2}.
\]
Substituting these estimates into~\eqref{EQ:Error Summary} yields, for every $f\in\cM$ (with $C_f=R$ for the ball $\cM$ of Lemma~\ref{LEM:Poisson Assumptions}), the bound
\begin{equation}\label{EQ:Total Error Bound}
   \|T(f)-\wh T(f)\|_{L^\infty(\Omega)}
   \le M_{\mathrm{sol}}\,C_Y h_Y^{\,s+2-d/2} + C_P L_T C_f\,C_X h_X^{\,s-d/2}.
\end{equation}
This is the baseline rate we have for the direct learning of $T$ without kernel decomposition. 

\subsection{The energy RKHS structure} 

We now analyze the analytical structure of the Green's function and its induced RKHS. The development mirrors the Dirichlet-inner-product theory of Green's functions as reproducing kernels~\cite{ArSm-AJM57,FaYe-NM11}, with the standard inner product replaced by the energy inner product~\eqref{EQ:Energy IP}. The same energy-space viewpoint underlies operator-adapted wavelets and the kernel-learning recovery of elliptic Green's functions in~\cite{OwSc-Book19,OwYo-arXiv18}. The variable coefficients $\gamma,\sigma$ ride along inside the energy form.

\begin{theorem}\label{THM:HG Gen}
The space $H_0^1(\Omega)$ equipped with the energy inner product~\eqref{EQ:Energy IP} admits $G$ as a \emph{generalized} reproducing kernel in the sense that, for every $u\in C_c^\infty(\Omega)$ and $x\in\Omega$,
\begin{equation}\label{EQ:Repro Energy}
u(x)=a\bigl(u,\,G(x,\cdot)\bigr):=\int_\Omega(\gamma\,\nabla u\cdot\nabla_zG(x,z)+\sigma\,u\,G(x,z))\,dz
\end{equation}
where the right-hand side is understood as the absolutely convergent
integral.
\end{theorem}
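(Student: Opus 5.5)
The plan is to obtain \eqref{EQ:Repro Energy} by integrating by parts against the Green's function on a domain with a small ball around the pole removed, and then letting the ball shrink. Fix $u\in C_c^\infty(\Omega)$ and $x\in\Omega$. For $\rho>0$ small, set $\Omega_\rho:=\Omega\setminus\overline{B_\rho(x)}$ and $g:=G(x,\cdot)$. Since $\cL$ has real-analytic coefficients, $g$ is smooth (indeed analytic) on $\overline\Omega\setminus\{x\}$ and satisfies $\cL g=0$ there; this uses symmetry $G(x,z)=G(z,x)$ together with \eqref{EQ:Green Func Def}.

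The first step is absolute convergence of the integral. For this I would use the classical pointwise bounds (Gr\"uter--Widman / Littman--Stampacchia--Weinberger, or simply the splitting $G=\Phi^\cL-H^\cL$ with smooth $H^\cL$ near $x$):
\[
|G(x,z)|\lesssim \Phi(x-z)\ \ (\text{plus }|\ln|x-z||\text{ when }d=2),\qquad |\nabla_z G(x,z)|\lesssim |x-z|^{1-d}.
\]
Both bounds are locally integrable in $\bbR^d$. Combined with $\gamma,\sigma\in L^\infty$ and $u,\nabla u$ bounded, this makes the integrand of \eqref{EQ:Repro Energy} integrable. Hence the integral equals the limit as $\rho\to0$ of the same integral over $\Omega_\rho$, by dominated convergence.

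The second step is the integration by parts on $\Omega_\rho$, moving derivatives onto $g$. Since $u$ vanishes near $\partial\Omega$, only the inner sphere contributes, and with $\nu$ the normal pointing into $B_\rho(x)$ we get
\[
\int_{\Omega_\rho}(\gamma\nabla u\cdot\nabla g+\sigma ug)\,dz=\int_{\Omega_\rho}u\,\cL g\,dz+\int_{\partial B_\rho(x)}u\,\gamma\,\partial_\nu g\,dS=\int_{\partial B_\rho(x)}u\,\gamma\,\partial_\nu g\,dS.
\]

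The third step, which I expect to be the main obstacle, is to show that this boundary flux converges to $u(x)$. I would write $u(z)=u(x)+O(\rho)$ on the sphere; the error term is $O(\rho)\cdot\rho^{d-1}\cdot\rho^{1-d}\to0$. It then remains to show that
\[
\int_{\partial B_\rho(x)}\gamma\,\partial_\nu g\,dS\to1.
\]
I would get this without any fine asymptotics of $G$ by a flux argument. Take $\psi\in C_c^\infty(\Omega)$ with $\psi\equiv1$ near $x$. Applying the weak definition of \eqref{EQ:Green Func Def} with test function $\psi$ gives $a(\psi,g)=\psi(x)=1$. Applying the same integration-by-parts identity with $\psi$ in place of $u$ gives $a(\psi,g)=\lim_{\rho\to0}\int_{\partial B_\rho}\gamma\,\partial_\nu g\,dS$. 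Moreover, since $\psi\equiv1$ near $x$, the flux integral is in fact independent of $\rho$ for small $\rho$, because $\cL g=0$ on the annulus. Hence the flux equals $1$, and the result follows: $a(u,g)=u(x)$.

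One point needs care: making sense of \eqref{EQ:Green Func Def} weakly with test functions in $C_c^\infty$, given that $g\in W^{1,p}$ only for $p<d/(d-1)$. This is exactly the standard distributional formulation, so the identity $a(\psi,g)=\psi(x)$ is the definition of the Green's function rather than something that has to be proved.

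Alternatively, one can bypass the flux step entirely. Write $g=\Phi^\cL(x,\cdot)-H^\cL(x,\cdot)$ and use the known flux $\int_{\partial B_\rho}\gamma\,\partial_\nu\Phi^\cL\,dS\to1$ of the fundamental solution (the Laplace computation with $\gamma(x)$ frozen), while the smooth corrector $H^\cL$ contributes a flux of $O(\rho^{d-1})\to0$.
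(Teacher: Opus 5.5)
Your proof is correct, but it takes a different route from the paper's. The paper integrates by parts once over all of $\Omega$, writing $a(u,G(x,\cdot))=\int_\Omega u\,\cL_zG(x,z)\,dz$, and then reads off $\cL_zG(x,\cdot)=\delta_x$ distributionally. Its only analytic input is $|\nabla_zG(x,z)|\lesssim|x-z|^{1-d}\in L^1_{\mathrm{loc}}$. The passage across the pole is left implicit: it amounts to the pointwise gradient of $G(x,\cdot)$ being its distributional gradient.

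You instead excise $B_\rho(x)$ and integrate by parts only where $G(x,\cdot)$ is smooth and $\cL$-harmonic. You then locate $u(x)$ in the limiting conormal flux through $\partial B_\rho(x)$, normalized either by a single cutoff $\psi$ or by the frozen-coefficient asymptotics of $\Phi^\cL$. This makes the behaviour at the singularity fully explicit, at the cost of length.

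Two remarks. First, the aside that the flux is independent of $\rho$ is false when $\sigma\not\equiv0$. Away from $x$ one has $\nabla\cdot(\gamma\nabla g)=\sigma g$, so the fluxes at two radii differ by $\int\sigma g$ over the annulus. This is harmless, because you only use the limit $\rho\to0$.

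Second, and more substantively, the cutoff identity $a(\psi,g)=\psi(x)$ is the theorem itself for $u=\psi$. If you take that weak identity as the definition of $G$ for all test functions, the theorem is immediate and the excision does no work. The excision has content when $G$ is defined in the very weak sense $\int_\Omega G(x,z)\,\cL\phi(z)\,dz=\phi(x)$, or classically via $G=\Phi^\cL-H^\cL$.

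In the very weak case it is simpler to excise and move one derivative from $u$ onto $g$ directly. The only boundary term is $\int_{\partial B_\rho(x)}g\,\gamma\,\partial_\nu u\,dS=O(\rho(1+|\ln\rho|))$, so the theorem follows with no flux normalization at all; this is the rigorous form of the paper's one line. In the classical case, your alternative flux computation for $\Phi^\cL$ is the non-circular normalization.
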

\begin{proof}
For $u\in C_c^\infty(\Omega)$ and fixed $x$, integrating by parts and using that $u$ vanishes on $\partial\Omega$,
\[
   a\bigl(u,G(x,\cdot)\bigr)
   =\int_\Omega\bigl(\gamma\,\nabla u\cdot\nabla_z G(x,z)+\sigma\,u\,G(x,z)\bigr)dz
   =\int_\Omega u(z)\,\cL_z G(x,z)\,dz .
\]
The first integral converges absolutely since $|\nabla_zG(x,z)|\lesssim
|x-z|^{1-d}\in L^1_{\mathrm{loc}}$. By the symmetry of $G$ and~\eqref{EQ:Green Func Def}, $\cL_z G(x,z)=\delta(z-x)$, so the right-hand side is $u(x)$.
\end{proof}
Note that for $d\ge2$, $G$ cannot be treated as a reproducing kernel in the sense of Definition~\ref{DEF:RKHS}, since $G(x,\cdot)\notin H^1(\Omega)$ (its gradient fails to
be square-integrable at the pole) and point evaluation is unbounded on $H_0^1(\Omega)$. Therefore, $(H_0^1,a)$ is an RKHS only in the generalized sense given in Theorem~\ref{THM:HG Gen}; see, for instance,~\cite{ArSm-AJM57}. This distinction does not affect the approximation framework. The direct bound uses $\cH_{\mathrm{src}}$ and $\cH_{\mathrm{sol}}$, while the global corrector bounds use the compatible input and output RKHSs of Corollary~\ref{COR:Global Rates}. In particular, the output space is a regularity-controlled subspace of $\cV_\cL$, not necessarily $\cV_\cL$ with its energy norm.

Let $\cV_\cL$ denote the space of finite-energy $\cL$-harmonic functions,
\[
   \cV_\cL:=\bigl\{\,w\in H^1(\Omega):\ \cL w=0\ \text{in }\Omega\,\bigr\} .
\]
The following energy orthogonality result is useful.
\begin{proposition}\label{PROP:Energy Orthogonality Gen}
For any $u\in H_0^1(\Omega)$ and $w\in\cV_\cL$, $a(u,w)=0$.
\end{proposition}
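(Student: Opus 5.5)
The plan is to prove the identity first for test functions and then pass to all of $H_0^1(\Omega)$ by density and continuity of $a(\cdot,\cdot)$.

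\textbf{Weak form of $\cL$-harmonicity.} Fix $w\in\cV_\cL$. The condition $\cL w=0$ in $\Omega$ for an $H^1$ function is understood in the weak (distributional) sense. For every $\varphi\in C_c^\infty(\Omega)$ this reads
\[
\int_\Omega\bigl(\gamma\,\nabla w\cdot\nabla\varphi+\sigma\,w\,\varphi\bigr)\,dx=0 .
\]
This is exactly $a(\varphi,w)=0$ by the definition~\eqref{EQ:Energy IP} and the symmetry of $a$. If one instead prefers to read $\cL w=0$ classically, note that $\gamma,\sigma$ are real-analytic, so interior elliptic regularity makes $w$ smooth in $\Omega$. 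Integrating by parts against a compactly supported $\varphi$ then gives the same identity, with no boundary terms.

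\textbf{Continuity of the form.} Next I will show that $a(\cdot,w)$ is a continuous linear functional on $H^1(\Omega)$. By the bounds~\eqref{EQ:Coeff Cond}, together with boundedness of $\sigma$ (it is continuous on $\overline\Omega$), Cauchy--Schwarz gives
\[
|a(u,w)|\le \max(\gamma_1,\|\sigma\|_{L^\infty})\,\|u\|_{H^1}\|w\|_{H^1}.
\]

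\textbf{Density.} Given $u\in H_0^1(\Omega)$, choose $\varphi_n\in C_c^\infty(\Omega)$ with $\varphi_n\to u$ in $H^1$; this is possible because $H_0^1(\Omega)$ is by definition the $H^1$-closure of $C_c^\infty(\Omega)$. The continuity bound then yields
\[
a(u,w)=\lim_{n\to\infty} a(\varphi_n,w)=0 .
\]

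\textbf{Main obstacle.} The only delicate point is being precise about the meaning of $\cL w=0$ for a merely $H^1$ function $w$ whose trace on $\partial\Omega$ need not vanish. The argument must never integrate by parts on the $w$ side up to the boundary. The density step avoids this entirely, since all integration by parts happens against compactly supported test functions. Finiteness of the energy of $w$, which is built into the definition of $\cV_\cL$, is exactly what is needed for the continuity bound.
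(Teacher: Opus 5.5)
Your proof is correct and matches the paper's argument: $a(\varphi,w)=0$ for $\varphi\in C_c^\infty(\Omega)$ is the weak form of $\cL w=0$, and density of $C_c^\infty(\Omega)$ in $H_0^1(\Omega)$ together with continuity of $a$ on $H^1(\Omega)\times H^1(\Omega)$ extends it to all $u\in H_0^1(\Omega)$. Your explicit continuity constant is a small improvement on the paper's citation of~\eqref{EQ:Coeff Cond}, since the bound on $\sigma$ comes from its continuity on $\overline\Omega$ and not from that condition.
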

\begin{proof}
For $\phi\in C_c^\infty(\Omega)$, the identity $a(\phi,w)=0$ is precisely the weak form of $\cL w=0$: testing the equation against $\phi$ and integrating by parts produces no boundary term, because $\phi$ has compact support. Since $C_c^\infty(\Omega)$ is dense in $H_0^1(\Omega)$ and $a$ is continuous on $H^1(\Omega)\times H^1(\Omega)$ by~\eqref{EQ:Coeff Cond}, the identity extends to every $u\in H_0^1(\Omega)$. 
\end{proof}
Note that, for $w$ merely in $H^1(\Omega)$, the conormal derivative $\partial_\nu w$ is not a function, so the classical Green identity is not available. Therefore, we cannot prove the above result directly using Green's first identity.

\begin{theorem}\label{THM:HH Gen}
Assume $\sigma\not\equiv0$. The space $\cV_\cL$ of finite-energy
$\cL$-harmonic functions, equipped with the energy inner product~\eqref{EQ:Energy IP}, is a reproducing kernel Hilbert space on $\Omega$, whose elements are real-analytic in the interior. Analyticity up to the boundary requires additional regularity of their boundary data. Its reproducing kernel is the Neumann--minus--Dirichlet corrector
\begin{equation}\label{EQ:Reproducing_kernel}
   H^\cL_a(x,y):=N(x,y)-G(x,y),
\end{equation}
where $N$ is the Green's function of $\cL$ under the natural (conormal) boundary condition $\gamma\,\partial_\nu N(x,\cdot)|_{\partial\Omega}=0$. That is,
$H^\cL_a(x,\cdot)\in\cV_\cL$ for each $x$, and
\begin{equation}\label{EQ:Reproducing Gen}
   w(x)=a\bigl(w,\,H^\cL_a(x,\cdot)\bigr)\qquad\text{for all }w\in\cV_\cL .
\end{equation}
\end{theorem}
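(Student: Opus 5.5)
The plan has three parts: show that $(\cV_\cL,a)$ is a Hilbert space, prove interior analyticity, and verify the reproducing identity~\eqref{EQ:Reproducing Gen} for $H^\cL_a=N-G$ using the defining properties of $N$ and $G$ together with Proposition~\ref{PROP:Energy Orthogonality Gen}.

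\textbf{Hilbert space and analyticity.} The assumption $\sigma\not\equiv0$ is used to make $a(\cdot,\cdot)$ an inner product on all of $H^1(\Omega)$, not just on $H^1_0(\Omega)$. Suppose $a(w,w)=0$. Then $\nabla w=0$, so $w$ is constant on the connected domain $\Omega$. It also gives $\int\sigma w^2=0$, and since $\sigma\ge0$ is not identically zero, this constant must vanish. A standard compactness argument (Rellich plus contradiction) then shows $\|w\|_a^2\ge c\|w\|_{H^1}^2$, so the energy norm is equivalent to the $H^1$ norm on $H^1(\Omega)$.

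$\cV_\cL$ is the kernel of the bounded map $H^1\to H^{-1}$, $w\mapsto a(w,\cdot)|_{H_0^1}$, so it is closed and hence a Hilbert space under $a$. Because $\gamma$ and $\sigma$ are real-analytic, analytic hypoellipticity of $\cL$ makes every $w\in\cV_\cL$ real-analytic in $\Omega$. In particular, point evaluation makes sense on $\cV_\cL$.

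\textbf{Construction of $N$ and the kernel section.} For fixed $x\in\Omega$, I take $N(x,\cdot)$ to be the solution of $\cL_zN=\delta_x$ with $\gamma\partial_\nu N=0$ on $\partial\Omega$. This Neumann problem is uniquely solvable precisely because $\sigma\not\equiv0$, so the form is coercive on $H^1$. One way to build it is to set $N=\Phi^\cL$-type singular part plus the variational solution of a coercive problem on $H^1$.

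Both $N(x,\cdot)$ and $G(x,\cdot)$ have the same singularity and satisfy $\cL(\cdot)=\delta_x$. Their difference $H^\cL_a(x,\cdot)$ therefore solves $\cL H=0$ distributionally, and hence classically by removable singularity and elliptic regularity. It is smooth near $x$, and I check that it lies in $H^1(\Omega)$ by splitting off a neighborhood of the pole: away from $x$, both functions are $H^1$ by boundary regularity. So $H^\cL_a(x,\cdot)\in\cV_\cL$.

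\textbf{Reproducing identity.} This is the main obstacle, because the singular pieces cannot be handled inside $a(\cdot,\cdot)$ directly. Fix $w\in\cV_\cL$ and split
\[
a(w,H^\cL_a(x,\cdot))=a(w,N(x,\cdot))-a(w,G(x,\cdot)),
\]
where each term is interpreted as an absolutely convergent integral, which is valid since $\nabla_z N,\nabla_z G=O(|x-z|^{1-d})$ and $w$ is bounded near $x$.

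\emph{First term.} The weak Neumann formulation gives $a(v,N(x,\cdot))=v(x)$ for test functions $v$ smooth near $x$ and in $H^1$. Taking $v=w$ yields $w(x)$. To justify this, I excise a ball $B_\epsilon(x)$ and apply Green's identity there, where $w$ is analytic. The outer boundary term is absent by the conormal condition, interpreted weakly through the variational definition of $N$'s regular part. The sphere term tends to $w(x)$.

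\emph{Second term.} The term $a(w,G(x,\cdot))$ vanishes. Formally this is Proposition~\ref{PROP:Energy Orthogonality Gen}, but $G(x,\cdot)\notin H_0^1$. I instead write $G(x,\cdot)=\chi\,G(x,\cdot)+(1-\chi)G(x,\cdot)$ with a cutoff $\chi$ near $x$. The second piece lies in $H_0^1$, so the Proposition kills it. The first piece is compactly supported, and since $\cL w=0$ classically near $x$, integration by parts gives $\int (\cL w)\,\chi G=0$.

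Combining the two terms gives~\eqref{EQ:Reproducing Gen}. Uniqueness of the reproducing kernel then identifies $H^\cL_a$ as the kernel of $\cV_\cL$, and its symmetry follows from that of $N$ and $G$.
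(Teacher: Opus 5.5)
Your proof is correct, but it reaches the reproducing identity by a different route from the paper's.

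\textbf{The paper's route.} The paper never handles the singular kernels directly. It mollifies $\delta_x$ and obtains $N^\eps(x,\cdot)$ and $G^\eps(x,\cdot)\in H_0^1(\Omega)$ by Lax--Milgram. Then $a(w,N^\eps)=\int_\Omega w\,\delta_x^\eps$ and $a(w,G^\eps)=0$ hold immediately, by the variational definitions and Proposition~\ref{PROP:Energy Orthogonality Gen}. It passes to the limit using a uniform energy bound on $N^\eps-G^\eps$ (obtained from the maximum principle) and weak $H^1$ convergence to $H^\cL_a(x,\cdot)$. Boundedness of point evaluation is proved separately by an interior estimate.

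\textbf{Your route.} You work with the unmollified kernels through absolutely convergent integrals. For $N$ you use excision. For $G$ you use the cutoff splitting $G=\chi G+(1-\chi)G$: the $H_0^1$ piece is killed by Proposition~\ref{PROP:Energy Orthogonality Gen}, and the compactly supported singular piece by interior integration by parts against $\cL w=0$.

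\textbf{What each buys.} The paper needs no pointwise information about the singularities. However, it leans on the asserted convergence $N^\eps\to N$ to identify the weak limit. Yours needs a concrete construction $N(x,\cdot)=\chi\Phi^\cL(x,\cdot)+R$ with $R\in H^1(\Omega)$ defined variationally. In exchange it avoids any limiting argument.

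\textbf{Points to tighten.}
\begin{itemize}
\item State boundedness of $w\mapsto w(x)$ explicitly as a consequence of the identity via Cauchy--Schwarz: $|w(x)|\le\|H^\cL_a(x,\cdot)\|_a\|w\|_a$. Saying that point evaluation ``makes sense'' is not the same as saying it is bounded.
\item Use that same splitting of $N$ to make the first term rigorous. Compute $a(w,R)$ purely from the variational definition of $R$. Apply excision and Green's identity only to $\chi\Phi^\cL$, whose support is compact in $\Omega$, where $w$ is smooth. Then no boundary trace of $w$ is ever paired with a conormal derivative.
\item Absolute convergence of $a(w,N(x,\cdot))$ and $a(w,G(x,\cdot))$ needs boundedness of $\nabla w$ near $x$, not of $w$. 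Away from $x$ it comes from the $L^2$ pairing of gradients.
\end{itemize}
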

\begin{proof}
Recall ~\eqref{EQ:Energy IP} and standard coercivity and continuity arguments, we can easily get that $(\cV_\cL,a)$ is a Hilbert space where 
\[\cV_\cL:=\{w\in H^1(\Omega):a(w,\phi)=0\ \forall\phi\in C_c^\infty(\Omega)\}
\] is the intersection of kernels of the $H^1$-continuous functionals $w\mapsto a(w,\phi)$. Moreover, we have the norm equivalence $\|\cdot\|_a\simeq\|\cdot\|_{H^1}$ on $H^1(\Omega)$ with $a$ being a genuine inner product on the space. In particular, we let $C_{\mathrm{eq}}>0$ be the constant such that $\|\cdot\|_{H^1(\Omega)}\le C_{\mathrm{eq}}\|\cdot\|_a$.  
 
We now show that point evaluations are bounded, and elements of $(\cV_\cL,a)$ are analytic. A function $w\in\cV_\cL$ solves $\cL w=0$ with real-analytic coefficients, so by interior elliptic regularity $w\in \cC^\infty(\Omega)$, and by analytic regularity for elliptic operators with real-analytic coefficients~\cite{Morrey-Book66}, $w$ is real-analytic in $\Omega$. No uniform continuation across $\partial\Omega$ is asserted for the energy-space unit ball. For a fixed $x\in\Omega$, let us pick $r>0$ with $\overline{B(x,r)}\subset\Omega$.
The interior estimate for solutions of $\cL w=0$ supplies a constant
$C_{\mathrm{int}}=C_{\mathrm{int}}(x,r)$ with
\[
   |w(x)|\;\le\;C_{\mathrm{int}}\,\|w\|_{L^2(B(x,r))}
   \;\le\;C_{\mathrm{int}}\,\|w\|_{L^2(\Omega)}
   \;\le\;C_{\mathrm{int}}\,C_{\mathrm{eq}}\,\|w\|_a\,,
\]
where $C_{\mathrm{eq}}$ is the norm-equivalence constant of the first step
($\|\cdot\|_{L^2(\Omega)}\le\|\cdot\|_{H^1(\Omega)}\le C_{\mathrm{eq}}\|\cdot\|_a$). Thus $\delta_x$ is a bounded functional on $(\cV_\cL,a)$, which is therefore an RKHS.

Finally, the identification of the reproducing kernel can be established as follows. For $\eps>0$, let $\delta_x^\eps\in C_c^\infty(\Omega)$ be a standard mollification
of $\delta_x$, supported in $B(x,\eps)$ and with a unit mass. Since $\delta_x^\eps\in
(H^1(\Omega))^*$ and $a$ is coercive on $H^1(\Omega)$,
Lax--Milgram yields a unique $N^\eps(x,\cdot)\in H^1(\Omega)$ with
\[
a(N^\eps(x,\cdot),v)=\int_\Omega\delta_x^\eps v, \quad \forall\, v\in H^1(\Omega),
\]
i.e.\ $\cL N^\eps(x,\cdot)=\delta_x^\eps$ with the natural (conormal) boundary
condition $\gamma\,\partial_\nu N^\eps(x,\cdot)=0$. As $\eps\to 0$, classical potential theory gives $N^\eps(x,\cdot)\to N(x,\cdot)$, the Neumann-type Green's function, locally uniformly away from $x$. (Note that, for $d\ge 2$, $\delta_x\notin(H^1(\Omega))^*$). 
The self-adjointness of $\cL$ gives $N(x,y)=N(y,x)$. The function $H^\cL_a(x,\cdot)=N(x,\cdot)-G(x,\cdot)$ satisfies $\cL  H^\cL_a(x,\cdot)=\delta_x-\delta_x=0$, so
$H^\cL_a(x,\cdot)\in\cV_\cL$.
That is, with the two pole singularities canceling each other,  $H^\cL_a(x,\cdot)$ is in fact smooth in $\Omega$ and lies in $H^1(\Omega)$. For any $w\in\cV_\cL$ (and thus $w\in C^\infty(\Omega)$), with the Green's identity $a(w,v)=\int_\Omega w\,\cL v\,dz+\int_{\partial\Omega}\gamma\,w\,\partial_\nu v\,dS$ applied with $v=N^\eps(x,\cdot)$, we get 
\[
   a\bigl(w,N^\eps(x,\cdot)\bigr)
   =\int_\Omega w\,\delta^\eps_x\,dz+\int_{\partial\Omega}\gamma\,w\,
   \underbrace{\partial_\nu N^\eps(x,\cdot)}_{=\,0}\,dS \;\xrightarrow[\eps\to0]{}\;w(x).
\]  
The same mollified argument applied to $G$ gives $a(w,G^\eps(x,\cdot))=0$ for every $\eps$, since $G^\eps(x,\cdot)\in H_0^1(\Omega)$ (by Proposition~\ref{PROP:Energy Orthogonality Gen}). Moreover, the difference $N^\eps(x,\cdot)-G^\eps(x,\cdot)$ is $\cL$-harmonic, and its energy
\[
\|N^\eps(x,\cdot)-G^\eps(x,\cdot)\|_a^2
=\int_\Omega\delta_x^\eps\,\bigl(N^\eps(x,\cdot)-G^\eps(x,\cdot)\bigr)\,dz
\]
(with the cross term vanishing by the Proposition~\ref{PROP:Energy Orthogonality Gen}) remains bounded as $\eps\to0$: indeed, $N^\eps(x,\cdot)-G^\eps(x,\cdot)$ is $\cL$-harmonic in $\Omega$ with Dirichlet trace $N^\eps(x,\cdot)|_{\partial\Omega}$, which converges uniformly on $\partial\Omega$ as $\eps\to0$ for fixed interior $x$, so the weak maximum principle bounds $\|N^\eps(x,\cdot)-G^\eps(x,\cdot)\|_{L^\infty(\Omega)}$ uniformly in $\eps$, and the right-hand side above is bounded because $\delta_x^\eps$ has unit mass. Hence, $N^\eps(x,\cdot)-G^\eps(x,\cdot)$ converges weakly in $(H^1(\Omega),a)$ to $H^\cL_a(x,\cdot)$. Taking the difference of the two limits above then gives $a(w,H^\cL_a(x,\cdot))=w(x)$, which is~\eqref{EQ:Reproducing Gen}.
\end{proof}
 
The pointwise corrector $H^\cL=\Phi^\cL-G$ determines the target operator $T_{H^\cL}$ in the learning construction. The kernel $H^\cL_a=N-G$ in Theorem~\ref{THM:HH Gen} is the reproducing kernel of $(\cV_\cL,a)$, under the assumption $\sigma\not\equiv0$. The reproducing identity~\eqref{EQ:Reproducing Gen} applies to $H^\cL_a$ and does not follow for $H^\cL$ from the pointwise splitting alone. When $\gamma\equiv1$ and $\sigma\equiv0$, the energy form reduces to the Dirichlet inner product~\cite{ArSm-AJM57}, given by
\[
   \langle u,v\rangle_{\cD}:=\int_\Omega \nabla u\cdot\nabla v\,dx.
\]
Its induced norm $\|u\|_{\cD}=\|\nabla u\|_{L^2(\Omega)}$ is complete on harmonic functions modulo constants by the Poincar\'e inequality.
The distinction is genuine even in the pure-Laplacian case with $\gamma\equiv1$, $\sigma\equiv0$: on the unit disk, an explicit Fourier computation gives
$\langle w,H(x,\cdot)\rangle_{\cD}=\tfrac12\,w(x)$ for any harmonic $w$ (mod constants), i.e.\ the double-layer term is equal to $-\frac12w(x)$ up to a constant, and the reproducing kernel of $\cV_{\mathrm{harm}}/\bbR$ with respect to the Dirichlet inner product is $N-G$, which on the disk is equal to $2H$ modulo constants. Classical statements attributing the reproducing property to $H$ itself are therefore off by exactly this normalization. The construction of the harmonic kernel function from the Neumann and Green functions is classical; see, for instance, ~\cite{BeSc-Book53}.

\begin{theorem}\label{THM:Direct Sum Gen}
Assume $\sigma\not\equiv0$. Under the energy inner product~\eqref{EQ:Energy IP}, the space $H^1(\Omega)$ splits $a$-orthogonally into the zero-trace and $\cL$-harmonic subspaces, that is,
\begin{equation}\label{EQ:Direct Sum}
   \bigl(H^1(\Omega),a\bigr)
   =\bigl(H_0^1(\Omega),a\bigr)\ \oplus_a\ \bigl(\cV_\cL,a\bigr),
\end{equation}
and their energy reproducing kernels add, in the sense that
\begin{equation}\label{EQ:Kernel Sum}
   N=G+H^\cL_a,
   \qquad\text{i.e.}\qquad
   \cH_N\cong\cH_G\mathbin{\oplus_a}\cH_{H^\cL_a}
   \cong H_0^1(\Omega)\mathbin{\oplus_a}\cV_\cL ,
\end{equation}
where $N$ is the natural-boundary Green's function of Theorem~\ref{THM:HH Gen}, $H^\cL_a=N-G$, and $\cH_N$, $\cH_G$ denote the generalized energy spaces associated with $N$ and $G$, identified with $(H^1(\Omega),a)$ and $(H_0^1(\Omega),a)$, respectively, while $\cH_{H^\cL_a}$ denotes the RKHS generated by $H^\cL_a$, identified with $(\cV_\cL,a)$. 
Independently, the free-space fundamental solution $\Phi^\cL$ of $\cL$ (with
$\gamma,\sigma$ extended smoothly and uniformly elliptically to $\bbR^d$)
furnishes the \emph{pointwise} kernel splitting
\begin{equation}\label{EQ:Kernel-Decomp}
   G(x,y)=\Phi^\cL(x,y)-H^\cL(x,y),
   \qquad H^\cL(x,y):=\Phi^\cL(x,y)-G(x,y),
\end{equation}
in which $H^\cL(\cdot,y)\in\cV_\cL$ is the $\cL$-harmonic function with boundary
data $\Phi^\cL(\cdot,y)|_{\partial\Omega}$.
\end{theorem}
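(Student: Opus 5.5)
The proof splits into three parts: the orthogonal decomposition~\eqref{EQ:Direct Sum}, the kernel identity~\eqref{EQ:Kernel Sum}, and the pointwise splitting~\eqref{EQ:Kernel-Decomp}.

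\textbf{Step 1: the direct sum.} The assumption $\sigma\not\equiv0$ enters here. With $\sigma\ge0$ and $\sigma\not\equiv0$, a Friedrichs/Poincar\'e-type argument shows that $a$ is coercive on all of $H^1(\Omega)$, not just on $H_0^1$: if $\|u\|_a=0$ then $\nabla u=0$, so $u$ is constant, and $\int\sigma u^2=0$ forces $u=0$. A compactness argument upgrades this to norm equivalence, as already used in the proof of Theorem~\ref{THM:HH Gen}. Hence $(H^1(\Omega),a)$ is a Hilbert space. Since $H_0^1(\Omega)$ is a closed subspace, $H^1=H_0^1\oplus (H_0^1)^{\perp_a}$. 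It remains to identify $(H_0^1)^{\perp_a}$ with $\cV_\cL$.
\begin{itemize}
\item $\cV_\cL\subset(H_0^1)^{\perp_a}$ is Proposition~\ref{PROP:Energy Orthogonality Gen}.
\item Conversely, if $w\perp_a H_0^1$, then in particular $a(w,\phi)=0$ for all $\phi\in C_c^\infty(\Omega)$. This is the weak form of $\cL w=0$, so $w\in\cV_\cL$.
\end{itemize}
Concretely, each $u\in H^1$ decomposes as $u=(u-w)+w$, where $w$ is the $\cL$-harmonic extension of the trace of $u$, obtained by Lax--Milgram on $H_0^1$.

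\textbf{Step 2: the kernel identity.} Here I would follow the logic of Theorem~\ref{THM:Aronszajn Sum}, but prove the reproducing properties directly, since $N$ and $G$ are only generalized kernels. For $u=u_0+w$ with $u_0\in H_0^1$ and $w\in\cV_\cL$, I want to show
\[
a(u,N(x,\cdot)) = a(u_0,G(x,\cdot))+a(w,H^\cL_a(x,\cdot)) = u_0(x)+w(x) = u(x).
\]
The first equality follows from expanding $N=G+H^\cL_a$ and discarding the cross terms $a(u_0,H^\cL_a(x,\cdot))$ and $a(w,G(x,\cdot))$. The first cross term vanishes by Proposition~\ref{PROP:Energy Orthogonality Gen}. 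The second must be handled through the mollified $G^\eps$, exactly as in the proof of Theorem~\ref{THM:HH Gen}, because $G(x,\cdot)\notin H^1$. The remaining reproducing properties are Theorem~\ref{THM:HG Gen}, extended from $C_c^\infty$ by density against the mollified kernels, and Theorem~\ref{THM:HH Gen}.

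Because the summands are $a$-orthogonal and intersect only in $\{0\}$, the Aronszajn norm reduces to $\|u_0\|_a^2+\|w\|_a^2=\|u\|_a^2$. This gives the stated isometric identifications.

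\textbf{Step 3: the pointwise splitting.} Extend $\gamma,\sigma$ analytically and uniformly elliptically to $\bbR^d$, and take a fundamental solution $\Phi^\cL$, which exists by the classical parametrix/Levi construction. Define $H^\cL(\cdot,y)$ as the solution of $\cL H^\cL(\cdot,y)=0$ in $\Omega$ with Dirichlet data $\Phi^\cL(\cdot,y)|_{\partial\Omega}$. For fixed $y\in\Omega$ this boundary datum is smooth (indeed analytic), so $H^\cL(\cdot,y)\in H^1$, and hence $H^\cL(\cdot,y)\in\cV_\cL$. Then $\Phi^\cL(\cdot,y)-H^\cL(\cdot,y)$ solves $\cL u=\delta_y$ with zero trace, and uniqueness of the Dirichlet Green's function identifies it with $G(\cdot,y)$.

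\textbf{Main obstacle.} The hardest step is justifying the cross term $a(w,G(x,\cdot))=0$ and the limiting pairings in Step 2 when $G(x,\cdot)\notin H^1$. I would handle both by passing to $G^\eps,N^\eps\in H^1$, applying the genuine orthogonal decomposition to these, and taking $\eps\to0$ using the weak convergence and uniform energy bound already established in the proof of Theorem~\ref{THM:HH Gen}.
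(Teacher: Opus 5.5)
Your Steps~1 and~3 are essentially the paper's proof. The paper also gets $(H_0^1)^{\perp_a}=\cV_\cL$ from the weak formulation together with Proposition~\ref{PROP:Energy Orthogonality Gen}. It likewise reads off the pointwise splitting from $\cL\Phi^\cL=\delta=\cL G$ and $G|_{\partial\Omega}=0$.

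The only difference in Step~3 is the direction of the argument. The paper \emph{defines} $H^\cL:=\Phi^\cL-G$ and checks that it is $\cL$-harmonic with trace $\Phi^\cL|_{\partial\Omega}$; this tacitly uses elliptic regularity to see that the difference is smooth near $y$ and hence in $H^1(\Omega)$. You instead solve the Dirichlet problem first. That gives $H^\cL(\cdot,y)\in H^1(\Omega)$ for free, but you then need a uniqueness theorem for the Dirichlet Green's function in a class containing $\Phi^\cL-H^\cL$. Also, only a smooth extension of $\gamma,\sigma$ is needed there; an analytic, uniformly elliptic extension to all of $\bbR^d$ need not exist.

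Step~2 is where you depart from the paper, and the extra claim you aim for does not hold as stated. The paper proves no reproducing identity for $N$ on $H^1(\Omega)$. It \emph{identifies} $\cH_N$ and $\cH_G$ with $(H^1(\Omega),a)$ and $(H_0^1(\Omega),a)$, lets Theorems~\ref{THM:HG Gen} and~\ref{THM:HH Gen} supply the generalized kernels of the two summands, and notes that $N=G+H^\cL_a$ is just the definition of $H^\cL_a$. So \eqref{EQ:Kernel Sum} follows from \eqref{EQ:Direct Sum} with no limiting argument.

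Your identity $a(u,N(x,\cdot))=u(x)$, for all $u=u_0+w$ with $u_0\in H_0^1(\Omega)$ and every $x$, cannot be proved. For $d\ge2$ the value $u_0(x)$ is undefined. Since $\nabla_zG(x,\cdot)\in L^p$ only for $p<d/(d-1)\le 2$, the integral $a(u_0,G(x,\cdot))$ need not even converge. For the same reason, ``extending Theorem~\ref{THM:HG Gen} from $C_c^\infty$ by density'' fails: neither side is continuous in the $H^1$ norm, because point evaluation is unbounded, as the paper remarks after Theorem~\ref{THM:HG Gen}.

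If you want this additional content, there are two correct ways to state it.
\begin{itemize}
\item On the dense subspace $C_c^\infty(\Omega)+\cV_\cL$. There all pairings converge absolutely. The cross term $a(w,G(x,\cdot))=0$ follows by writing $G(x,\cdot)=\chi G(x,\cdot)+(1-\chi)G(x,\cdot)$ with a cutoff $\chi$ near $x$. The second piece lies in $H_0^1(\Omega)$, so Proposition~\ref{PROP:Energy Orthogonality Gen} applies. The first is handled by integrating by parts against $w$, which is smooth on $\operatorname{supp}\chi$.
\item At the mollified level. There $a(u,N^\eps(x,\cdot))=\int_\Omega\delta_x^\eps u$ holds exactly for every $u\in H^1(\Omega)$. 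Moreover, $N^\eps=G^\eps+(N^\eps-G^\eps)$ is precisely the $a$-orthogonal splitting of this Riesz representer along \eqref{EQ:Direct Sum}. Convergence to $u(x)$ then holds only at Lebesgue points.
\end{itemize}
Your ``main obstacle'' paragraph already points toward the second option. With that correction, or by simply dropping the extra claim, your proposal proves the theorem.
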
 
\begin{proof}
As in the proof of Theorem~\ref{THM:HH Gen}, we know that $a(\cdot,\cdot)$ is an inner product on $H^1(\Omega)$ equivalent to the $H^1$ norm. For $w\in H^1(\Omega)$, the condition that $a(w,\phi)=0$ for all $\phi\in H_0^1(\Omega)$ is, by the weak formulation used in the proof of Proposition~\ref{PROP:Energy Orthogonality Gen}, equivalent to $\cL w=0$ weakly, i.e.\ $w\in\cV_\cL$. Hence, $\cV_\cL=(H_0^1)^{\perp_a}$ and~\eqref{EQ:Direct Sum} holds. Theorems~\ref{THM:HG Gen} and~\ref{THM:HH Gen} identify the (generalized) reproducing kernels of the two summands as $G$ and $H^\cL_a=N-G$. The identity $N=G+H^\cL_a$ in~\eqref{EQ:Kernel Sum} is then the definition~\eqref{EQ:Reproducing_kernel}, and the isomorphism in~\eqref{EQ:Kernel Sum} is the direct sum~\eqref{EQ:Direct Sum} just established. Finally, subtracting $\cL\Phi^\cL=\delta$ from $\cL G=\delta$ gives $\cL H^\cL=0$ in $\Omega$, while $G|_{\partial\Omega}=0$ gives $H^\cL|_{\partial\Omega}=\Phi^\cL|_{\partial\Omega}$,
which is~\eqref{EQ:Kernel-Decomp}.
\end{proof}
It is important to observe that, for $\sigma\equiv0$, every statement in Theorem~\ref{THM:Direct Sum Gen} holds after quotienting by constants, on which the energy form degenerates.

\paragraph{The Poisson special case.} When $\gamma\equiv1$ and $\sigma\equiv0$, the energy inner product is the Dirichlet inner product and the fundamental solution $\Phi^\cL=\Phi$ of~\eqref{EQ:Laplace Fundamental Sol} is explicit. The corresponding Dirichlet Green's function decomposes as
\begin{equation}\label{EQ:Laplace Green Func Decomp}
   G(x,y)=\Phi(x,y)-H(x,y),
\end{equation}
where, for each fixed $y\in\Omega$, the corrector $H(\cdot,y)$ is the unique
solution of
\begin{equation}\label{EQ:Laplace Harmonic Correction}
   \begin{cases}
      -\Delta_x H(x,y)=0, & x\in\Omega,\\
      H(x,y)=\Phi(x,y), & x\in\partial\Omega,
   \end{cases}
\end{equation}
and is smooth throughout $\overline\Omega$. At the function-space level, the corresponding energy decomposition is
\begin{equation}\label{eq:HPhi_decomp}
   H^1(\Omega)/\bbR
   \cong H_0^1(\Omega)\oplus_{\cD}\cV_{\mathrm{harm}},
\end{equation}
where $\cV_{\mathrm{harm}}:=\{w\in H^1(\Omega):-\Delta w=0\}/\bbR$, and $H_0^1(\Omega)$ is identified with its image in $H^1(\Omega)/\bbR$. The sum is orthogonal under the Dirichlet inner product, whose gradient norm is complete on these spaces. This energy decomposition is distinct from the pointwise identity $\Phi=G+H$, which by itself does not identify $\cH_\Phi$ with the energy space or establish equality of the associated inner products. For RKHS formulations of harmonic function spaces, see~\cite{Auchmuty-SIAM09,EnLuPePe-JM04}.

\subsection{Kernel decomposition for improved rates}
\label{SUBSEC:Improved Rates}

The direct bound~\eqref{EQ:Total Error Bound} treats $T$ as a generic operator between the spaces $\cH_{\mathrm{src}}$ and $\cH_{\mathrm{sol}}$. Theorem~\ref{THM:Direct Sum Gen} shows, however, that the singular element of the Green's function is carried entirely by the universal kernel $\Phi^\cL$, while everything the domain contributes lives in the analytic space $\cV_\cL$. We now embed this structure into a reformulation of the learning problem and account for the rates that it helps to gain. Throughout this subsection, $\Phi^\cL$ is used as an analytical device. Realizing the construction computably when $\Phi^\cL$ is not explicit is the task of Section~\ref{SEC:Computable-Decom}.

\begin{corollary}\label{COR:Exact Reduction}
For $f\in\cH_{\mathrm{src}}$ let $T_{\Phi^\cL}f:=\int_\Omega\Phi^\cL(\cdot,y)\,f(y)\,dy$ denote the free-space potential of $f$. Then $T_{\Phi^\cL}f$ splits $a$-orthogonally along Theorem~\ref{THM:Direct Sum Gen} as
\[
   T_{\Phi^\cL}f \;=\; Tf \;+\; w_f^\Phi,
   \qquad Tf\in\cH_G=(H_0^1(\Omega),a),\quad w_f^\Phi\in\cV_\cL ,
\]
where the corrector $w_f^\Phi(x)=\int_\Omega H^\cL(x,y)\,f(y)\,dy$ is the unique $\cL$-harmonic function with boundary data $(T_{\Phi^\cL}f)|_{\partial\Omega}$. Moreover, the solution operator factors as
\begin{equation}\label{EQ:Exact Operator Splitting}
   T \;=\; T_{\Phi^\cL}\;-\;E_\cL\circ\mathfrak{R}\circ T_{\Phi^\cL}\,,
\end{equation}
where $\mathfrak{R}$ is the restriction to $\partial\Omega$ and $E_\cL:h\mapsto w$ ($\cL w=0\ \mbox{in}\ \Omega$, $w|_{\partial\Omega}=h$) is the $\cL$-harmonic extension operator.
\end{corollary}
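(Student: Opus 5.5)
The plan is to integrate the pointwise splitting \eqref{EQ:Kernel-Decomp} of Theorem~\ref{THM:Direct Sum Gen} against $f$ and then identify each piece with one summand of \eqref{EQ:Direct Sum}. For $f\in\cH_{\mathrm{src}}$, multiplying $\Phi^\cL(x,y)=G(x,y)+H^\cL(x,y)$ by $f(y)$ and integrating over $\Omega$ gives
\[
T_{\Phi^\cL}f=Tf+w_f^\Phi,\qquad w_f^\Phi(x):=\int_\Omega H^\cL(x,y)f(y)\,dy .
\]
All three integrals converge because $\Phi^\cL(x,\cdot)$ and $G(x,\cdot)$ are locally integrable ($|x-y|^{2-d}$ or logarithmic singularities) and $f\in H^s\subset \cC^0(\overline\Omega)$, since $s>d/2$. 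We already know $Tf\in H^{s+2}\cap H_0^1(\Omega)$. Next, $T_{\Phi^\cL}f$ solves $\cL(T_{\Phi^\cL}f)=f$ in $\Omega$. It lies in $H^1(\Omega)$, and indeed in $H^2_{\rm loc}$ of a neighborhood of $\overline\Omega$, by standard mapping properties of the Newtonian-type potential for uniformly elliptic operators with smooth coefficients on $\bbR^d$. Hence $w_f^\Phi=T_{\Phi^\cL}f-Tf\in H^1(\Omega)$ and satisfies $\cL w_f^\Phi=f-f=0$ weakly, so $w_f^\Phi\in\cV_\cL$.

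The boundary data follow from the fact that $Tf\in H_0^1$ has zero trace, which gives $\mathfrak R w_f^\Phi=\mathfrak R\,T_{\Phi^\cL}f$. Uniqueness of the $\cL$-harmonic function with prescribed trace holds because the difference of two such functions lies in $H_0^1\cap\cV_\cL$. By Proposition~\ref{PROP:Energy Orthogonality Gen} such a difference has zero energy norm, and $a$ is an inner product on $H_0^1$, so it vanishes. Equivalently, this is the trivial intersection in \eqref{EQ:Direct Sum}. Thus $w_f^\Phi=E_\cL\mathfrak R T_{\Phi^\cL}f$, and substituting $Tf=T_{\Phi^\cL}f-w_f^\Phi$ yields \eqref{EQ:Exact Operator Splitting}. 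The $a$-orthogonality of the two pieces, $a(Tf,w_f^\Phi)=0$, is again Proposition~\ref{PROP:Energy Orthogonality Gen}, so the splitting is exactly the $a$-orthogonal decomposition of $T_{\Phi^\cL}f\in H^1(\Omega)$ given by Theorem~\ref{THM:Direct Sum Gen}. Here we use $\sigma\not\equiv0$; in the case $\sigma\equiv0$ the same statement holds modulo constants.

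As a consistency check, $w_f^\Phi$ given by the integral formula is itself $\cL$-harmonic, since each $H^\cL(\cdot,y)$ is. Differentiating under the integral is justified locally in the interior because $H^\cL$ is smooth for $x$ away from $\partial\Omega$ uniformly in $y$ in compact sets. Near the boundary, however, $y$ can approach $x\in\partial\Omega$, so it is cleaner to deduce harmonicity from the difference argument above rather than directly.

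The main obstacle is the regularity step: showing $T_{\Phi^\cL}f\in H^1(\Omega)$, so that the trace is meaningful and $w_f^\Phi$ lands in the finite-energy space $\cV_\cL$. I would handle it by writing $T_{\Phi^\cL}f=\Phi^\cL*(\text{extension of }f)$ and invoking the parametrix estimate: $\Phi^\cL$ maps compactly supported $L^2$ functions to $H^2_{\rm loc}(\bbR^d)$. Since $\Omega$ is bounded, restricting to $\Omega$ then gives an $H^2(\Omega)$ function. In the Laplace case this reduces to the classical Calder\'on--Zygmund bound for the Newtonian potential.
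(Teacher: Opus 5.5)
Your proof is correct, and its skeleton matches the paper's: integrate the splitting \eqref{EQ:Kernel-Decomp} against $f$, read off the boundary data from $Tf|_{\partial\Omega}=0$, and take the $a$-orthogonality from Proposition~\ref{PROP:Energy Orthogonality Gen}. You differ in how you justify $\cL w_f^\Phi=0$.

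The paper argues at the kernel level: each $H^\cL(\cdot,y)$ is $\cL$-harmonic, so their superposition is too. You argue at the operator level, $\cL w_f^\Phi=\cL T_{\Phi^\cL}f-\cL Tf=f-f=0$, after first placing $T_{\Phi^\cL}f$ in $H^2(\Omega)$ via the mapping properties of the volume potential.

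Your route pays for that regularity input up front, but it delivers two facts that the paper's short proof leaves implicit. First, $w_f^\Phi$ has finite energy, which membership in $\cV_\cL$ requires; the paper only invokes $\|T_{\Phi^\cL}f\|_{H^{s+2}(\Omega)}\lesssim\|f\|_{H^s(\Omega)}$ later in that section. Second, the $\cL$-harmonic function with the given trace is unique. The statement asserts this, and you derive it cleanly from Proposition~\ref{PROP:Energy Orthogonality Gen}. The paper's route, in turn, obtains harmonicity without any global regularity of the potential.

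Your reservation about that direct argument in the consistency check is unnecessary, though harmless. The identity $\cL w_f^\Phi=0$ is an interior statement. For $x$ near a compact $K\Subset\Omega$, $H^\cL(x,y)=\Phi^\cL(x,y)-G(x,y)$ is bounded uniformly in $y\in\Omega$. Both terms are bounded once $y$ stays away from $K$, and $H^\cL$ is continuous on compact subsets of $\Omega\times\Omega$. Interior estimates for $\cL$-harmonic functions then justify differentiating under the integral, even as $y$ approaches $\partial\Omega$.
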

\begin{proof}
By the kernel splitting~\eqref{EQ:Kernel-Decomp} and linearity, $T_{\Phi^\cL}f-Tf=\int_\Omega H^\cL(\cdot,y)f(y)\,dy=:w_f^\Phi$. Each $H^\cL(\cdot,y)$ is $\cL$-harmonic (Theorem~\ref{THM:Direct Sum Gen}), so $\cL w_f^\Phi=0$ in $\Omega$, and $w_f^\Phi|_{\partial\Omega}=(T_{\Phi^\cL}f)|_{\partial\Omega}$ because $Tf|_{\partial\Omega}=0$. Hence $w_f^\Phi=E_\cL[\mathfrak{R}\,T_{\Phi^\cL}f]$, which is~\eqref{EQ:Exact Operator Splitting}. The $a$-orthogonality of the two summands is Proposition~\ref{PROP:Energy Orthogonality Gen}.
\end{proof}

In~\eqref{EQ:Exact Operator Splitting} the potential $T_{\Phi^\cL}$ and the restriction $\mathfrak{R}$ are known. The only unknown is the $\cL$-harmonic extension $E_\cL$. Learning $T$ is therefore equivalent to the following reformulated problem.

\medskip
\noindent 
\textbf{Reduced learning problem (A):}\ \ Learn the $\cL$-harmonic extension $E_\cL:h\mapsto w$ from the re-processed data $\{(h_\ell,\,w_\ell)\}_{\ell=1}^N$, where $h_\ell:=\mathfrak{R}\,T_{\Phi^\cL}f_\ell$ and $w_\ell:=T_{\Phi^\cL}f_\ell-u_\ell$. Then recover $Tf=T_{\Phi^\cL}f-E_\cL\bigl[\mathfrak{R}\,T_{\Phi^\cL}f\bigr]$. 
\medskip

The reformulation replaces the operator $T:\cH_{\mathrm{src}}\to\cH_{\mathrm{sol}}$
by $E_\cL:\cH_\partial^\cL\to\cV_\cL$, with $\cH_\partial^\cL$ the boundary trace space identified below. It improves the input rate, while additional regularity of the boundary data permits faster output convergence in a suitably chosen output RKHS.

We first identify the boundary-input regularity, then state the global rates in a separate corollary. The output RKHS will be a regularity-controlled subspace of $\cV_\cL$. It does not need to carry the energy norm.

The input of $E_\cL$ is the trace $h=\mathfrak{R}\,T_{\Phi^\cL}f$.  The potential solves $\cL(T_{\Phi^\cL}f)=f$ in $\Omega$. With the coefficient extension of Theorem~\ref{THM:Direct Sum Gen}, the one-sided regularity of volume potentials gives $\|T_{\Phi^\cL}f\|_{H^{s+2}(\Omega)}\le C\|f\|_{H^s(\Omega)}$~\cite{McLean-Book00}. No such cross-boundary Sobolev estimate is asserted for the zero-extended source. The trace theorem therefore gives $h\in H^{s+3/2}(\partial\Omega)$. The induced trace space satisfies the continuous embedding
\[
   \cH_\partial^\cL:=\bigl\{\mathfrak{R}\,T_{\Phi^\cL}f\ :\ f\in H^s(\Omega)\bigr\}
   \;\hookrightarrow\;H^{s+3/2}(\partial\Omega)
\]
under the minimum-source norm $\|h\|_{\cH_\partial^\cL}:=\min\{\|f\|_{H^s(\Omega)}:\mathfrak R T_{\Phi^\cL}f=h\}$. It is a complete image RKHS. Surjectivity onto the full Sobolev trace space is not needed. Theorem~\ref{THM:H Partial} gives the corresponding construction for the computable potential. The Sobolev sampling inequality on the $(d-1)$-dimensional boundary, applied to the kernel projection residual, then gives~\cite{FuWr-SINUM12}
\begin{equation}\label{EQ:Power Partial Exact}
   \sup_{\omega\in\partial\Omega}P_{\cH_\partial^\cL,W}(\omega)
   \;\le\;C\,h_W^{\,s+3/2-\frac{d-1}{2}}
   \;=\;C\,h_W^{\,s+2-\frac{d}{2}} ,
\end{equation}
two full orders above the volume rate $\cO(h_W^{\,s-d/2})$ of the direct input: the potential lifts the Sobolev order by $2$, the trace costs $1/2$, and the drop in dimension from $d$ to $d-1$ contributes the remaining $1/2$.

We summarize the observations into the following corollary, in which $\eps_Y$ denotes an arbitrary bound for the output power function. The two concrete choices of $\eps_Y$ used later are stated as parts (i) and (ii). The assumptions behind part (ii) are discussed in Remark~\ref{REM:Rate Regimes} below.
\begin{corollary}\label{COR:Global Rates}
Let $s>d/2$, and let $\cH_X$ and $\cH_Y$ be RKHSs satisfying
\[
   \cH_X\hookrightarrow H^{s+3/2}(\partial\Omega),
   \qquad \cH_Y\subset\cV_\cL,
\]
such that $E_\cL:\cH_X\to\cH_Y$ is bounded. Let $P_X,P_Y$ be their kernel interpolation projections on the chosen sampling sets, with invertible Gram matrices and fill distances $h_X$ and $h_Y$, and set $\wh E_\cL=P_YE_\cL P_X$, represented by the ideal matrix $\bM^\star$ of~\eqref{EQ:GStar}. Let $\eps_Y(h_Y)$ be any bound for the output power function, $\sup_{x\in\overline\Omega}P_{K_Y,V}(x)\le\eps_Y(h_Y)$. Then, for every $h\in\cH_X$ with $\|h\|_{\cH_X}\le R_\partial$,
\begin{equation}\label{EQ:Total Error Exact}
   \|E_\cL h-\wh E_\cL h\|_{L^\infty(\Omega)}
   \le M_{\cV}\,\eps_Y(h_Y)
   +L_E R_\partial\,\widetilde C_X h_X^{\,s+2-d/2},
\end{equation}
where $M_{\cV}:=R_\partial\|E_\cL\|_{\cH_X\to\cH_Y}$, $L_E\le1$ is the constant of Lemma~\ref{LEM:Harmonic Contraction}, and $\widetilde C_X$ is the constant of~\eqref{EQ:Power Partial Exact}. In particular:
\begin{enumerate}[label=(\roman*)]
\item \emph{Sobolev regime.} If $\cH_Y\hookrightarrow H^t(\Omega)$ for some $t\ge s+2$, then~\eqref{EQ:Total Error Exact} holds with $\eps_Y(h_Y)=\widetilde C_Y h_Y^{\,t-d/2}$, that is,
\[
   \|E_\cL h-\wh E_\cL h\|_{L^\infty(\Omega)}
   \le M_{\cV}\widetilde C_Y\,h_Y^{\,t-d/2}
   +L_E R_\partial\,\widetilde C_X h_X^{\,s+2-d/2}.
\]
\item \emph{Analytic regime.} If the output kernel $K_Y$ and the sampling family $V$ satisfy the exponential power-function estimate
\begin{equation}\label{EQ:Power Exp Global}
   \sup_{x\in\overline\Omega}P_{K_Y,V}(x)\le\widetilde C_Y\,e^{-c_Y/h_Y}
\end{equation}
with constants $\widetilde C_Y,c_Y>0$ independent of $h_Y$, then~\eqref{EQ:Total Error Exact} holds with $\eps_Y(h_Y)=\widetilde C_Y e^{-c_Y/h_Y}$, that is,
\[
   \|E_\cL h-\wh E_\cL h\|_{L^\infty(\Omega)}
   \le M_{\cV}\widetilde C_Y\,e^{-c_Y/h_Y}
   +L_E R_\partial\,\widetilde C_X h_X^{\,s+2-d/2}.
\]
\end{enumerate}
\end{corollary}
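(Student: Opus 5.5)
The plan is to follow the two-term splitting~\eqref{EQ:Error Decomp} of the abstract framework, with $T$ replaced by $E_\cL$, $\cX=\partial\Omega$, and $\cY=\overline\Omega$. By linearity of $E_\cL$ and the exact representation $\cR_Y\bM^\star\cP_X=P_YE_\cL P_X$, the triangle inequality gives
\[
\|E_\cL h-\wh E_\cL h\|_{L^\infty(\Omega)}\le \|E_\cL h-P_YE_\cL h\|_{L^\infty(\Omega)}+\|P_YE_\cL(h-P_Xh)\|_{L^\infty(\Omega)}.
\]
The rearrangement of the second term differs from the one used in Proposition~\ref{PROP:Input Error}: the Lebesgue constant $C_P$ should not appear. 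The first term is handled exactly as in Proposition~\ref{PROP:Output Error}. Since $\|E_\cL h\|_{\cH_Y}\le\|E_\cL\|_{\cH_X\to\cH_Y}R_\partial=M_\cV$, Theorem~\ref{THM:Power Func} applied in $\cH_Y$ gives the bound $M_\cV\,\eps_Y(h_Y)$.

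For the input term, I would exploit that $E_\cL$ is bounded into $\cH_Y\subset\cV_\cL$. The idea is to write $P_YE_\cL(h-P_Xh)=E_\cL(h-P_Xh)-(I-P_Y)E_\cL(h-P_Xh)$. The first piece is bounded by the maximum-principle contraction of Lemma~\ref{LEM:Harmonic Contraction}: $\|E_\cL g\|_{L^\infty(\Omega)}\le L_E\|g\|_{L^\infty(\partial\Omega)}$ with $L_E\le1$, which holds since $\sigma\ge0$. Combining this with Theorem~\ref{THM:Power Func} on $\partial\Omega$ and the boundary sampling bound~\eqref{EQ:Power Partial Exact} gives $L_ER_\partial\widetilde C_Xh_X^{\,s+2-d/2}$. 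Here I use that $\cH_X\hookrightarrow H^{s+3/2}(\partial\Omega)$, so that the Sobolev sampling inequality on the $(d-1)$-dimensional analytic boundary applies with exponent $s+3/2-(d-1)/2$.

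The second piece, $(I-P_Y)E_\cL(h-P_Xh)$, is an output-projection residual. Its size is at most $\eps_Y(h_Y)\,\|E_\cL\|\,\|h-P_Xh\|_{\cH_X}\le\eps_Y(h_Y)\,M_\cV$, because $P_X$ is an orthogonal projection and so $\|h-P_Xh\|_{\cH_X}\le R_\partial$. This would produce an extra constant $2M_\cV$ in front of $\eps_Y$. I expect this to be the main obstacle: matching the stated constant exactly. There are two ways to try to avoid the doubling. The first is to decompose directly as $E_\cL h-P_YE_\cL P_Xh=(I-P_Y)E_\cL P_Xh+E_\cL(h-P_Xh)$; since $\|P_Xh\|_{\cH_X}\le\|h\|_{\cH_X}$, the first summand is bounded by $M_\cV\eps_Y(h_Y)$ and the second by the contraction estimate, which gives precisely~\eqref{EQ:Total Error Exact}. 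I would adopt this ordering of the triangle inequality from the start, as it removes $C_P$ and gives the stated constants.

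The two regimes then follow by substituting for $\eps_Y$. In regime (i), $\cH_Y\hookrightarrow H^t(\Omega)$ with $t\ge s+2>d/2$, so Theorem~\ref{THM:Power Func Decay}(i) gives $\eps_Y=\widetilde C_Yh_Y^{\,t-d/2}$. In regime (ii), the bound is just the hypothesis~\eqref{EQ:Power Exp Global}. A minor point to check is that all power functions are taken over $\overline\Omega$ and $\partial\Omega$, so the $L^\infty$ bounds are global. The requirement $h_X\le h_0$ from Theorem~\ref{THM:Power Func Decay} can be absorbed into $\widetilde C_X$.
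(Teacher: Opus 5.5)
Your final argument is exactly the paper's proof: you split $E_\cL h-P_YE_\cL P_Xh=(I-P_Y)E_\cL P_Xh+E_\cL(h-P_Xh)$, bound the first summand by $\eps_Y(h_Y)\|E_\cL\|\,\|P_Xh\|_{\cH_X}\le M_\cV\eps_Y(h_Y)$, bound the second by the maximum-principle contraction combined with the boundary power-function estimate, and then substitute for $\eps_Y$ in the two regimes. The preliminary route through $P_YE_\cL(h-P_Xh)$, which you discard, would indeed only give the constant $2M_\cV$, so committing to the second ordering from the start is the right choice.
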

\begin{proof}
The input embedding gives the estimate~\eqref{EQ:Power Partial Exact} with $\cH_X$ in place of $\cH_\partial^\cL$. We write
\[
   E_\cL h-P_YE_\cL P_Xh
   =E_\cL(h-P_Xh)+(I-P_Y)E_\cL P_Xh.
\]
The maximum principle bounds the first term by $L_E R_\partial\widetilde C_Xh_X^{s+2-d/2}$ (Lemma~\ref{LEM:Harmonic Contraction}). The second is at most $\eps_Y(h_Y)\|E_\cL P_Xh\|_{\cH_Y}\le M_{\cV}\eps_Y(h_Y)$ because $\|P_Xh\|_{\cH_X}\le R_\partial$ (Theorem~\ref{THM:Power Func}), which proves~\eqref{EQ:Total Error Exact}. In case (i), Theorem~\ref{THM:Power Func Decay}(i) applied to $\cH_Y\hookrightarrow H^t(\Omega)$ shows that $\eps_Y(h_Y)=\widetilde C_Yh_Y^{\,t-d/2}$ is an admissible bound. In case (ii), the admissible bound $\eps_Y(h_Y)=\widetilde C_Ye^{-c_Y/h_Y}$ is the hypothesis~\eqref{EQ:Power Exp Global} itself.
\end{proof}

\begin{remark}\label{REM:Rate Regimes} 
Part (ii) is conditional: the exponential decay of the output term is not derived but assumed through~\eqref{EQ:Power Exp Global}, which is a property of the pair $(K_Y,V)$. The regime is called analytic because an estimate of the form~\eqref{EQ:Power Exp Global} can be expected only when the unit ball of $\cH_Y$ consists of uniformly analytic functions; analytic geometry and analytic coefficients alone do not imply it. The natural sufficient setting is the following. Assume that $\partial\Omega$, $\gamma$ and $\sigma$ are analytic, and that every $w\in\cH_Y$ extends analytically to a function $\bar w$ on a fixed neighborhood $D\Supset\overline\Omega$ with the Cauchy-type bound
\[
   \|D^\alpha\bar w\|_{L^\infty(D)}
   \le A\rho^{-|\alpha|}\,|\alpha|!\,\|w\|_{\cH_Y}
   \quad\text{for every multi-index }\alpha,
\]
where $A,\rho>0$ are independent of $w$. For analytic regularity up to the boundary with analytic data, see~\cite{MoNi-CPAM57} and~\cite[Proposition~2.1]{CoDaNi-M3AS12}. Under this condition the output class is uniformly analytic, which is the situation in which exponential kernel-interpolation estimates are known for analytic kernels (compare Theorem~\ref{THM:Power Func Decay}(ii) for the Gaussian kernel).
\end{remark}

\section{Computable decomposition via Laplace parametrix}
\label{SEC:Computable-Decom}

The single obstruction to adopting the exact splitting~\eqref{EQ:Kernel-Decomp} as a learning strategy is that, unlike the Laplace fundamental solution $\Phi$, the operator $\Phi^\cL$ is not available in closed form for variable $\gamma$. We now develop a computable decomposition via the standard Laplace parametrix. The construction is rooted in classical potential theory and boundary integral methods, where elliptic boundary-value problems are represented through volume and layer potentials~\cite{McLean-Book00}. Locally approximating the fundamental solution by freezing the principal coefficient is the classical Levi parametrix method~\cite{Levi-RCMP07,Miranda-Book70}. Parametrix-based boundary-domain formulations have also been developed for variable-coefficient elliptic problems~\cite{Mikhailov-EABE02}. A data-driven alternative learns the domain-independent free-space kernel of a variable-coefficient operator numerically~\cite{NeChKrYiLi-CMAME24}. Here, that kernel is replaced by an explicit parametrix. Our use of this construction is different: we employ it to reformulate the operator-learning data and analyze the resulting approximation rates.

\subsection{Decomposition via Laplace parametrix}
\label{SUBSEC:Laplace Para}

Let us start by freezing the principal coefficient at the source point and replacing $\Phi^\cL$ by the explicit Laplace parametrix
\begin{equation}\label{EQ:P0}
   P_0(x,y):=\frac{1}{\gamma(y)}\,\Phi(x-y),
\end{equation}
which carries exactly the same principal singularity as $\Phi^\cL$. A direct distributional computation, using $\nabla_x P_0=\gamma(y)^{-1}\nabla\Phi(x-y)$, $\Delta\Phi(x-y)=-\delta(x-y)$, and $\gamma(x)\delta(x-y)=\gamma(y)\delta(x-y)$, gives
\begin{equation}\label{EQ:LP0}
   \cL_x P_0(x,y)=\delta(x-y)+F_0(x,y),
   \qquad
   F_0(x,y)=-\frac{\nabla\gamma(x)\cdot\nabla\Phi(x-y)}{\gamma(y)}
   +\frac{\sigma(x)\,\Phi(x-y)}{\gamma(y)}.
\end{equation}
The term $F_0$, referred to as the defect, is the price of using the Laplace fundamental solution for an operator with a variable $\gamma$ and a zeroth-order term. It splits into a piece proportional to $\nabla\gamma$, singular of order $|x-y|^{1-d}$, and a strictly milder piece proportional to $\sigma$, singular only of order $|x-y|^{2-d}$ (logarithmic when $d=2$). The defect vanishes identically when $\gamma$ is constant and $\sigma\equiv0$, in which case $P_0=\Phi$ is the exact fundamental solution and we are back in the Poisson decomposition~\eqref{EQ:Laplace Green Func Decomp}.

Let us now define the computable corrector:
\begin{equation}
   H_0(x,y):=P_0(x,y)-G(x,y),
   \qquad\text{so that}\qquad G=P_0-H_0 .
\end{equation}
It is straightforward to verify that $H_0$ is $\cL$-harmonic up to the defect in the sense that:
\begin{equation}\label{EQ:H0 Equation}
   \cL_x H_0(x,y)=F_0(x,y)\ \text{in }\Omega,
   \qquad H_0(x,y)=P_0(x,y)\ \text{on }\partial\Omega,
\end{equation}
so it splits as $H_0(\cdot,y)=w(\cdot,y)+p(\cdot,y)$ with 
\[\cL w=0,\; w|_{\partial\Omega}=P_0(\cdot,y)|_{\partial\Omega},\; \text{ and }\; \cL p=F_0(\cdot,y), \; p|_{\partial\Omega}=0\]
where $p$ is the defect potential. When $\gamma$ is constant and $\sigma\equiv0$, $F_0\equiv0$, so $p\equiv0$ and $H_0=H$ is exactly harmonic:~\eqref{EQ:H0 Equation} collapses to~\eqref{EQ:Laplace Green Func Decomp}. For variable coefficients, the only new object is the smooth defect potential $p$, and it is benign: since $|\nabla\Phi(z)|\sim|z|^{1-d}$, the leading part of $F_0(\cdot,y)$ lies in $L^r_{\mathrm{loc}}$ for $r<d/(d-1)$. The interior elliptic regularity makes $p(\cdot,y)$ more regular than $G$ itself near the pole.

The leading $\nabla\gamma$-singularity of $F_0$ can be canceled by appending the explicit first-order term
\[
   P_1(x,y):=-\frac{\nabla\gamma(y)\cdot(x-y)}{2\,\gamma(y)^2}\,\Phi(x-y),
\]
which leaves a defect one power less singular. The cancellation mechanism is elementary: for any fixed vector $a\in\bbR^d$ one has $\Delta\bigl((a\cdot z)\Phi(z)\bigr)=2\,a\cdot\nabla\Phi(z)$ for $z\neq0$, so, freezing the coefficients at the source point,
\[
   -\gamma(y)\,\Delta_x P_1(x,y)
   =\frac{\nabla\gamma(y)\cdot\nabla\Phi(x-y)}{\gamma(y)},
   \qquad x\neq y,
\]
which is the exact negative of the coefficient-frozen leading part of $F_0$ in~\eqref{EQ:LP0}. Taylor-expanding $\gamma$ at $y$ in the remaining terms shows that the new defect $F_1:=\cL_x (P_0+P_1)(\cdot,y)-\delta(\cdot-y)$ is of order $\cO(|x-y|\,|\nabla\Phi(x-y)|)=\cO(|x-y|^{2-d})$.

The construction extends to arbitrary order $m\ge1$ by the classical Levi iteration~\cite{Friedman-Book08}, which we record explicitly. Define the iterated defect kernels
\[
   F_0^{\ast1}:=F_0,
   \qquad
   F_0^{\ast(j+1)}(x,y):=\int_\Omega F_0(x,z)\,F_0^{\ast j}(z,y)\,dz,
   \quad j\ge1,
\]
and the $m$-th order parametrix
\begin{equation}\label{EQ:Pm}
   P^{(m)}(x,y):=P_0(x,y)
   +\sum_{j=1}^{m}(-1)^j\int_\Omega P_0(x,z)\,F_0^{\ast j}(z,y)\,dz .
\end{equation}
Applying~\eqref{EQ:LP0} to each term of~\eqref{EQ:Pm} telescopes the defects:
\begin{equation}\label{EQ:Fm}
   \cL_x P^{(m)}(x,y)=\delta(x-y)+F_m(x,y),
   \qquad
   F_m:=(-1)^m F_0^{\ast(m+1)} .
\end{equation}
We note that each composition with the weakly singular kernel $F_0=\cO(|x-y|^{1-d})$ raises the order of the singularity by one, so $F_m=\cO(|x-y|^{m+1-d})$ (bounded, up to logarithmic factors, once $m\ge d-1$). That is, each order of the expansion buys exactly one power of the defect, consistent with the first-order computation above. 

Two practical points are worth noting. First, every term in~\eqref{EQ:Pm} is an explicit iterated integral of the \emph{known} kernels $P_0$ and $F_0$, evaluable by quadrature. 
There is no requirement to solve the problem with the operator $\cL$, so $P^{(m)}$ is computable in exactly the same sense as $P_0$. Second, at low order, the iterated integrals may be traded for closed-form Taylor corrections: carrying the Taylor expansion of $\gamma$ and $\sigma$ at $y$ to the order $m$ and appending polynomial-modulated terms $q_j(x-y;y)\,\Phi(x-y)$, with $q_j$ being a polynomial of degree $j$ in $x-y$ (of which $P_1$ above is the degree-one instance), realizes the same order-by-order reduction. The recursion~\eqref{EQ:Pm} represents a systematic formulation. The effect of $P^{(m)}$ on the learning rates is quantified in the following subsection.

\subsection{The decomposed learning strategy and its rates}
\label{SUBSEC:Improved Algo}

The parametrix is explicit, so the \emph{parametrix potential}
\begin{equation}\label{eq:TP0_gen}
   (T_{P_0}f)(x):=\int_\Omega P_0(x,y)\,f(y)\,dy
   =\bigl(\Phi*(\gamma^{-1}f)\bigr)(x)
\end{equation}
is computable for any $f$, which for $\gamma\equiv1$ is the Newtonian potential $T_\Phi f=\int_\Omega\Phi(x,y)f(y)\,dy$. We define the corrector field \[
    w_f:=T_{P_0}f-Tf\,.
\]
From here on $w_f$ denotes the parametrix corrector, whose harmonic and defect-potential parts are separated below. It differs from the exact corrector $w^{\Phi}_f$ of Corollary~\ref{COR:Exact Reduction}. The two coincide when $P_0=\Phi^\cL$. 

Using $G=P_0-H_0$, we have $w_f(x)=\int_\Omega H_0(x,y)\,f(y)\,dy$. Applying $\cL$ through~\eqref{EQ:LP0} shows that $w_f$ solves the fully explicit boundary-value problem
\begin{equation}\label{EQ:wf BVP}
   \cL\,w_f=\mathcal{F}_0 f\ \text{in }\Omega,
   \qquad w_f=\bigl(T_{P_0}f\bigr)\big|_{\partial\Omega}\ \text{on }\partial\Omega,
\end{equation}
where $\mathcal{F}_0:=\cL\,T_{P_0}-I$ is the defect operator and we used $Tf|_{\partial\Omega}=0$. Both the forcing and the boundary data are computable from $f$ and the known $\Phi,\gamma,\sigma$. Splitting as in~\eqref{EQ:H0 Equation},
\begin{equation}\label{EQ:Wf Split}
w_f=\underbrace{E_\cL\bigl[(T_{P_0}f)|_{\partial\Omega}\bigr]}_{\cL\text{-harmonic extension}}
   \;+\;\underbrace{S_\cL\bigl[\mathcal{F}_0 f\bigr]}_{\text{defect potential}},
\end{equation}
where $E_\cL:h\mapsto(\cL\text{-harmonic }w,\ w|_{\partial\Omega}=h)$ is the $\cL$-harmonic extension operator and $S_\cL:=\cL^{-1}$ with zero boundary data. For $\gamma\equiv1,\sigma\equiv0$ one has $\mathcal{F}_0\equiv0$, $E_\cL=E$ the harmonic extension, and $w_f=E[(T_\Phi f)|_{\partial\Omega}]$. To be more concrete, for each pair $(f_\ell,u_\ell)$, the boundary data $h_\ell:=(T_\Phi f_\ell)|_{\partial\Omega}$ and interior corrector $w_\ell:=(T_\Phi f_\ell)-u_\ell$ satisfy the harmonic extension system
\begin{equation}
   \Delta w_\ell=0\ \text{in }\Omega,\qquad w_\ell=h_\ell\ \text{on }\partial\Omega,
\end{equation}
and $T$ is represented as $T=T_\Phi-E\circ\mathfrak{R}\circ T_\Phi$ with $\mathfrak{R}$ the restriction to $\partial\Omega$.

\medskip
\noindent
\textbf{Reduced learning problem (B):}\ \ Learn $f\mapsto w_f$ from the re-expressed
data $\{(f_\ell,\,w_{f_\ell})\}_{\ell=1}^N$, $w_{f_\ell}=T_{P_0}f_\ell-u_\ell$, then recover
$Tf=T_{P_0}f-w_f$. The unknown element is the $\cL$-harmonic extension $E_\cL$ plus
the smooth defect potential $S_\cL\circ\mathcal{F}_0$.
\medskip

The harmonic-extension term in~\eqref{EQ:Wf Split} is treated by Corollary~\ref{COR:Global Rates}, subject to its input--output compatibility and regularity assumptions. Its input is the boundary trace $(T_{P_0}f)|_{\partial\Omega}$, whose Sobolev regularity follows from the next result.
 
\begin{theorem}\label{THM:H Partial}
Let $s>d/2$, and assume sufficient boundary smoothness for the volume-potential and trace estimates below. Define $\Lambda f=(\Phi*f)|_{\partial\Omega}$ and $\cH_\partial:=\Lambda(H^s(\Omega))$, equipped with the minimum-source norm
\[
\|h\|_{\cH_\partial}:=\min\{\|f\|_{H^s(\Omega)}:\Lambda f=h\}.
\]
Then $\cH_\partial$ is an RKHS continuously embedded in $H^{s+3/2}(\partial\Omega)$, with reproducing kernel
\[
   K_\partial(x,x')=\int_\Omega\int_\Omega\Phi(x,y)\,K_s(y,y')\,\Phi(x',y')\,dy\,dy',
   \qquad x,x'\in\partial\Omega,
\]
where $K_s$ is the reproducing kernel of the chosen Sobolev RKHS $H^s(\Omega)$. \end{theorem}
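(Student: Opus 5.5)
The argument has three steps: $\Lambda$ is a bounded linear map into $H^{s+3/2}(\partial\Omega)$, the image of an RKHS under a bounded linear map is an RKHS with a pushed-forward kernel, and we identify that kernel explicitly.

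\emph{Step 1: boundedness of $\Lambda$.} Write $\Lambda=\mathfrak R\circ N_\Phi$, where $N_\Phi f=\Phi*f$ is the Newtonian potential restricted to $\Omega$. The one-sided volume-potential estimate already used for~\eqref{EQ:Power Partial Exact} (see~\cite{McLean-Book00}) gives $\|N_\Phi f\|_{H^{s+2}(\Omega)}\le C\|f\|_{H^s(\Omega)}$. The trace theorem, under the assumed boundary smoothness, then gives
\[
\|\Lambda f\|_{H^{s+3/2}(\partial\Omega)}\le C'\|f\|_{H^s(\Omega)} .
\]
Hence $\Lambda:H^s(\Omega)\to H^{s+3/2}(\partial\Omega)$ is bounded, and in particular $\ker\Lambda$ is closed.

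\emph{Step 2: the image space is an RKHS.} The minimum in the norm is attained, uniquely, at the minimal-norm preimage. Let $Q$ be the orthogonal projection onto $(\ker\Lambda)^\perp$; then $\|h\|_{\cH_\partial}=\|Qf\|_{H^s}$ for any $f$ with $\Lambda f=h$. Consequently $\Lambda$ restricts to an isometric isomorphism $(\ker\Lambda)^\perp\to\cH_\partial$, so $\cH_\partial$ is a Hilbert space. Combining this isometry with Step 1 gives $\|h\|_{H^{s+3/2}(\partial\Omega)}\le C'\|h\|_{\cH_\partial}$, which is the claimed continuous embedding. Since $s+3/2>(d-1)/2$, Sobolev embedding on the $(d-1)$-manifold $\partial\Omega$ makes point evaluation bounded on $H^{s+3/2}(\partial\Omega)$. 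By the embedding, it is therefore bounded on $\cH_\partial$, so $\cH_\partial$ is an RKHS.

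\emph{Step 3: identifying the kernel.} For $x\in\partial\Omega$, the functional $f\mapsto(\Lambda f)(x)=\int_\Omega\Phi(x,y)f(y)\,dy$ is bounded on $H^s(\Omega)$ by Step 1 and the point-evaluation bound. Let $g_x\in H^s(\Omega)$ be its Riesz representer. Applying the reproducing property of $K_s$ inside the integral gives
\[
g_x(y')=\langle g_x,K_s(\cdot,y')\rangle_{H^s}=\int_\Omega\Phi(x,y)K_s(y,y')\,dy .
\]
Since $g_x\perp\ker\Lambda$ automatically, for $h=\Lambda f$ with $f\in(\ker\Lambda)^\perp$ we get $h(x)=\langle f,g_x\rangle_{H^s}=\langle h,\Lambda g_x\rangle_{\cH_\partial}$. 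Therefore the reproducing kernel is
\[
K_\partial(x,\cdot)=\Lambda g_x,\qquad K_\partial(x,x')=\int_\Omega\Phi(x',y')\,g_x(y')\,dy',
\]
and substituting the formula for $g_x$ yields the stated double integral.

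\emph{Main obstacle.} The delicate point is justifying the interchange used in Step 3. That interchange requires $y\mapsto\Phi(x,y)$ to define, via integration, a bounded functional on $H^s$. This follows from $\Phi(x,\cdot)\in L^1(\Omega)$ together with $H^s\hookrightarrow \cC^0$ (since $s>d/2$), and it makes the double integral absolutely convergent because $K_s$ is bounded and continuous. The other delicate point is the one-sided $H^s\to H^{s+2}(\Omega)$ potential estimate in Step 1, which I will take from~\cite{McLean-Book00}.
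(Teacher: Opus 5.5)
Your proposal is correct and follows the paper's proof: you use the same volume-potential-plus-trace bound, closedness of $\ker\Lambda$, and isometric identification of $\cH_\partial$ with $(\ker\Lambda)^\perp$ (equivalently, the paper's quotient $H^s(\Omega)/\ker\Lambda$) to get the embedding and bounded point evaluation. Your Step 3 ($g_x(y')=\int_\Omega\Phi(x,y)K_s(y,y')\,dy$ as Riesz representer, then $K_\partial(x,\cdot)=\Lambda g_x$) just writes out the pushforward-kernel fact that the paper cites from Saitoh--Sawano.
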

\begin{proof}
The one-sided regularity of the Newtonian volume potential~\cite{McLean-Book00} and the trace theorem~\cite[Section 5.5]{Evans-Book10} give
\[
\|\Lambda f\|_{H^{s+3/2}(\partial\Omega)}
\le C\|\Phi*f\|_{H^{s+2}(\Omega)}
\le C'\|f\|_{H^s(\Omega)}.
\]
Thus $\ker\Lambda$ is closed. Each preimage has a unique minimum-norm representative in $(\ker\Lambda)^\perp$, so $\cH_\partial$ is isometric to the complete quotient $H^s(\Omega)/\ker\Lambda$. Taking the minimum over preimages proves the claimed continuous embedding. Since $s+3/2>(d-1)/2$, point evaluation on $\cH_\partial$ is bounded. Applying $\Lambda$ in each argument of $K_s$ gives the displayed reproducing kernel~\cite{SaSa-Book16}.
\end{proof}

For the weighted potential $T_{P_0}$, the same result applies to $\Lambda(\gamma^{-1}f)$: multiplication by the smooth, bounded function $\gamma^{-1}$ is bounded on $H^s(\Omega)$. The continuous embedding, together with the Sobolev zeros estimate on $\partial\Omega$ used in Corollary~\ref{COR:Global Rates}, gives
\begin{equation}
   \sup_{\omega\in\partial\Omega}P_{\cH_\partial,W}(\omega)
   \le C\,h_W^{\,s+3/2-\frac{d-1}{2}},
\end{equation}
where $d-1$ is the dimension of $\partial\Omega$ and $W\subset\partial\Omega$ is the input sampling set ($h_W=h_X$ in this subsection). The exponent exceeds the direct $H^s(\Omega)$ rate by $2$, reflecting the smoothness gained by applying the Newtonian potential before restricting to the boundary. Besides this input-regularity gain, the extension operator is stable in $L^\infty$.

More precisely, it is an $L^\infty$-contraction, as recorded below.
\begin{lemma}\label{LEM:Harmonic Contraction} 
The extension operator $E_\cL$, sending boundary data $h\in C(\partial\Omega)$ to the solution $w$ of $\cL w=0$, $w|_{\partial\Omega}=h$, is linear and satisfies
$\|E_\cL h\|_{L^\infty(\Omega)}\le\|h\|_{L^\infty(\partial\Omega)}$, which gives the $L^\infty$-Lipschitz part of assumption~\hyperlink{Assum-A2}{\Aii} with constant $L_E\le1$. Boundedness from $\cH_X$ into a chosen output RKHS $\cH_Y$ is a separate assumption of Corollary~\ref{COR:Global Rates}.
\end{lemma}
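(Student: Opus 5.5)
The plan is to derive the estimate from the weak maximum principle for $\cL=-\nabla\cdot(\gamma\nabla)+\sigma$ with $\sigma\ge0$, and to get linearity from uniqueness of the Dirichlet problem.

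\emph{Linearity.} The boundary value problem $\cL w=0$ in $\Omega$, $w|_{\partial\Omega}=h$, has at most one solution, since the difference of two solutions solves the homogeneous problem and vanishes by the maximum principle. Hence $E_\cL(\alpha h_1+\beta h_2)=\alpha E_\cL h_1+\beta E_\cL h_2$. For well-posedness with merely continuous data, I will use the analytic boundary together with Perron's method, or approximate $h$ uniformly by smooth data for which $H^1$ solutions exist by Lax--Milgram.

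\emph{Contraction, smooth data.} Set $M:=\|h\|_{L^\infty(\partial\Omega)}$ and take $h$ smooth. Apply the weak maximum principle to $v=w-M$. This gives $\cL v=-\sigma M\le0$ in $\Omega$ and $v\le0$ on $\partial\Omega$. Testing with $v^+\in H_0^1(\Omega)$ yields
\[
a(v,v^+)=\int_\Omega\bigl(\gamma|\nabla v^+|^2+\sigma v\,v^+\bigr)\,dx\le0 .
\]
Both integrands are nonnegative on the support of $v^+$, so $\nabla v^+=0$. Since $v^+\in H_0^1(\Omega)$, this forces $v^+=0$, i.e.\ $w\le M$. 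The same argument applied to $-w$ gives $w\ge-M$, so $\|E_\cL h\|_{L^\infty(\Omega)}\le M$.

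\emph{Contraction, continuous data.} For general continuous $h$, take smooth $h_k\to h$ uniformly. By linearity and the bound just proved, $\|E_\cL h_k-E_\cL h_j\|_{L^\infty}\le\|h_k-h_j\|_{L^\infty}$. The sequence is therefore uniformly Cauchy, and its limit is $\cL$-harmonic by interior estimates. The bound passes to the limit.

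\emph{Conclusion.} By linearity, $\|E_\cL h_1-E_\cL h_2\|_{L^\infty(\Omega)}\le\|h_1-h_2\|_{L^\infty(\partial\Omega)}$, which is the Lipschitz requirement in \hyperlink{Assum-A2}{\Aii} with $L_E\le1$.

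The only delicate point is the role of $\sigma\ge0$. The nonnegativity of the zeroth-order term is exactly what makes the constant $M$ a supersolution, and without it the bound could exceed $M$. The approximation step for continuous data is routine.
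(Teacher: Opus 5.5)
Your proposal is correct and follows the same route as the paper: the weak maximum principle for $\cL$ with $\sigma\ge0$ gives $\|E_\cL h\|_{L^\infty(\Omega)}\le\|h\|_{L^\infty(\partial\Omega)}$, and linearity turns this into the Lipschitz bound with $L_E\le1$. You also supply the details the paper leaves as standard, namely the $v^+$ test-function argument and the density step for continuous data.
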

\begin{proof}
Linearity is immediate. Since $w=E_\cL h$ solves $\cL w=0$ with $\sigma\ge0$, the weak maximum principle for $\cL$ gives $\|w\|_{L^\infty(\Omega)}\le\|h\|_{L^\infty(\partial\Omega)}$, with equality when $\sigma\equiv0$. Applying this to $h_1-h_2$ and using linearity yields the Lipschitz bound.
\end{proof}

If we choose compatible input and output RKHSs as in Corollary~\ref{COR:Global Rates}, and assume the induced boundary-data class is bounded by $R_\partial$ in $\cH_X$, then for the ideal projected extension operator, that corollary gives
\begin{equation}\label{EQ:Total Error Corrector Form}
   \|E_\cL(h)-\wh E_\cL(h)\|_{L^\infty(\Omega)}
   \le M_{\cV}\,\eps_Y(h_Y)
   + L_E R_\partial\,\widetilde C_X\,h_X^{\,s+2-d/2},
\end{equation}
where $M_{\cV}=R_\partial\|E_\cL\|_{\cH_X\to\cH_Y}$. For the Sobolev regime one may take $\cH_X=\cH_\partial$ and $\cH_Y=\cV_\cL\cap H^{s+2}(\Omega)$ with its $H^{s+2}$ norm. Theorem~\ref{THM:H Partial} then supplies $R_\partial$ from a bounded source class ($R_\partial\le R$ for the ball of radius $R$, by the minimum-source norm), and $\eps_Y(h_Y)\le C_Yh_Y^{s+2-d/2}$. Higher output regularity improves this algebraic exponent; the exponential choice in~\eqref{EQ:Power Exp Global} requires the analytic regime of Corollary~\ref{COR:Global Rates}(ii), not merely analytic coefficients and boundary. The defect potential is estimated separately.

\begin{proposition}\label{PROP:Defect}
The defect operator $\mathcal{F}_0=\cL\,T_{P_0}-I$ of~\eqref{EQ:wf BVP} satisfies the pointwise identity
\begin{equation}\label{EQ:F0 Identity}
   \mathcal{F}_0 f=-\,\nabla\gamma\cdot\nabla\bigl(T_{P_0}f\bigr)+\sigma\,T_{P_0}f,
\end{equation}
and $\mathcal{F}_0:H^s(\Omega)\to H^{s+1}(\Omega)$ is bounded. Consequently the defect potential $p_f:=S_\cL\mathcal{F}_0 f=T\mathcal{F}_0 f$ satisfies $p_f\in H^{s+3}(\Omega)\cap H_0^1(\Omega)$ with $\|p_f\|_{H^{s+3}(\Omega)}\lesssim\|f\|_{H^s(\Omega)}$, i.e.\ it is \emph{one Sobolev order smoother} than the solution $Tf\in H^{s+2}(\Omega)$.
\end{proposition}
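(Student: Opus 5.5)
The proof has three parts: derive the pointwise identity \eqref{EQ:F0 Identity} from the product rule, bound $\mathcal{F}_0$ on Sobolev spaces, and then apply elliptic regularity of $S_\cL=T$.

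\emph{The identity.} Set $v:=T_{P_0}f=\Phi*(\gamma^{-1}f)$, with $\gamma^{-1}f$ extended by zero outside $\Omega$. Since $-\Delta\Phi=\delta$, we have $-\Delta v=\gamma^{-1}f$ in $\Omega$ in the distributional sense. Expanding $\cL v=-\nabla\cdot(\gamma\nabla v)+\sigma v$ by the product rule gives
\[
\cL v=-\gamma\Delta v-\nabla\gamma\cdot\nabla v+\sigma v=f-\nabla\gamma\cdot\nabla v+\sigma v .
\]
Subtracting $f$ yields \eqref{EQ:F0 Identity}. This is the operator-level version of the kernel computation \eqref{EQ:LP0}: integrating $F_0(x,y)f(y)$ in $y$ reproduces the same two terms. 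For $f\in H^s$ with $s>d/2$ all terms are functions, so no distributional subtlety remains.

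\emph{Boundedness $H^s\to H^{s+1}$.} Multiplication by $\gamma^{-1}$ is bounded on $H^s(\Omega)$ because $\gamma$ is real-analytic on $\overline\Omega$ and $\gamma\ge\gamma_0>0$. The one-sided volume-potential estimate used in Theorem~\ref{THM:H Partial} (see~\cite{McLean-Book00}) gives $\|v\|_{H^{s+2}(\Omega)}\le C\|f\|_{H^s(\Omega)}$. Then $\nabla v\in H^{s+1}(\Omega)$, and multiplication by the smooth functions $\nabla\gamma$ and $\sigma$ is bounded on $H^{s+1}$ and $H^{s+2}\subset H^{s+1}$ respectively. Hence $\|\mathcal{F}_0 f\|_{H^{s+1}}\lesssim\|f\|_{H^s}$.

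\emph{Smoothness of the defect potential.} We have $p_f=T\mathcal{F}_0 f$. Apply the elliptic regularity statement of Lemma~\ref{LEM:Poisson Assumptions} with $s$ replaced by $s+1$: $T:H^{s+1}(\Omega)\to H^{s+3}(\Omega)\cap H^1_0(\Omega)$ is bounded, and this uses only the analytic boundary and coefficients. Composing the two bounds gives $\|p_f\|_{H^{s+3}}\lesssim\|f\|_{H^s}$, and $p_f|_{\partial\Omega}=0$ by construction.

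\emph{Main obstacle.} The only delicate point is the volume-potential estimate $H^s(\Omega)\to H^{s+2}(\Omega)$ for the zero-extended density, which is discontinuous across $\partial\Omega$. I would justify it exactly as in Theorem~\ref{THM:H Partial}, using the one-sided mapping property from~\cite{McLean-Book00} rather than a whole-space Sobolev estimate, and this requires the boundary-smoothness assumption already made there. Everything else reduces to product rules and standard elliptic regularity.
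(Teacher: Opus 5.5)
Your proposal is correct and follows the paper's proof essentially step for step. Both arguments expand $\mathcal{L}$ by the product rule using $-\Delta(T_{P_0}f)=\gamma^{-1}f$, then use the two-derivative gain $T_{P_0}:H^s\to H^{s+2}$ together with multiplication by the smooth fields $\nabla\gamma,\sigma$ to get $H^s\to H^{s+1}$, and finally apply elliptic regularity of $T$ at level $s+1$. Your remark that the volume-potential bound for the zero-extended density is a one-sided (transmission-property) estimate is the same justification the paper relies on.
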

\begin{proof}
Since $T_{P_0}f=\Phi*(\gamma^{-1}f)$ satisfies $-\Delta(T_{P_0}f)=\gamma^{-1}f$,
\[
   \cL\,T_{P_0}f=-\nabla\gamma\cdot\nabla(T_{P_0}f)-\gamma\,\Delta(T_{P_0}f)+\sigma\,T_{P_0}f
   =f-\nabla\gamma\cdot\nabla(T_{P_0}f)+\sigma\,T_{P_0}f,
\]
which is~\eqref{EQ:F0 Identity} after subtracting $f$. The weighted Newtonian potential gains two derivatives, $T_{P_0}:H^s\to H^{s+2}$, so $\nabla(T_{P_0}f)\in H^{s+1}$ and $T_{P_0}f\in H^{s+2}\subset H^{s+1}$. Multiplication by the smooth fields $\nabla\gamma,\sigma$ preserves $H^{s+1}$, giving $\mathcal{F}_0:H^s\to H^{s+1}$.

Finally $p_f=T\mathcal{F}_0 f$ has forcing in $H^{s+1}$ and zero Dirichlet data, so the elliptic regularity yields $p_f\in H^{s+3}\cap H_0^1$.
\end{proof}

The $m$-th order parametrix of Section~\ref{SUBSEC:Laplace Para} increases the Sobolev regularity of the defect potential by one order for each added parametrix term, without otherwise changing the learning strategy: one simply replaces $T_{P_0}$ by $T_{P^{(m)}}f:=\int_\Omega P^{(m)}(\cdot,y)f(y)\,dy$ in the definition of the corrector field and in the re-expressed data $w^{(m)}_{f_\ell}:=T_{P^{(m)}}f_\ell-u_\ell$.

\begin{corollary}\label{COR:Defect-m}
Let $P^{(m)}$ be the $m$-th order parametrix~\eqref{EQ:Pm} and $\mathcal{F}_m:=\cL\,T_{P^{(m)}}-I$ the associated defect operator. Then
\begin{equation}\label{EQ:Fm-operator}
   \mathcal{F}_m=(-1)^m\,\mathcal{F}_0^{\,m+1},
\end{equation}
so $\mathcal{F}_m:H^s(\Omega)\to H^{s+m+1}(\Omega)$ is bounded, and the corresponding defect potential $p^{(m)}_f:=S_\cL\mathcal{F}_m f=T\mathcal{F}_m f$
satisfies
\[
   p^{(m)}_f\in H^{s+2+(m+1)}(\Omega)\cap H_0^1(\Omega),
   \qquad
   \|p^{(m)}_f\|_{H^{s+2+(m+1)}(\Omega)}\lesssim\|f\|_{H^s(\Omega)},
\]
i.e.\ it is $m+1$ Sobolev orders smoother than the solution $Tf\in H^{s+2}(\Omega)$.
\end{corollary}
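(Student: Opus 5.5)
We first prove the operator identity~\eqref{EQ:Fm-operator} at the level of integral operators and then read off the regularity from Proposition~\ref{PROP:Defect}. Denote by $\mathcal{K}_0$ the integral operator with kernel $F_0(x,y)$, so that~\eqref{EQ:LP0} reads $\cL\,T_{P_0}=I+\mathcal{K}_0$ on $H^s(\Omega)$. Comparing with the definition $\mathcal{F}_0=\cL T_{P_0}-I$ gives $\mathcal{K}_0=\mathcal{F}_0$, the pointwise formula~\eqref{EQ:F0 Identity} being the explicit form of this kernel operator. The iterated kernel $F_0^{\ast j}$ is the kernel of $\mathcal{F}_0^{\,j}$, by Fubini, which is justified by the weak singularity $|F_0(x,y)|\lesssim|x-y|^{1-d}$ and the standard composition estimate for weakly singular kernels on the bounded set $\Omega$. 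Likewise, the $j$-th correction term in~\eqref{EQ:Pm} is the kernel of $T_{P_0}\mathcal{F}_0^{\,j}$. Hence
\[
   T_{P^{(m)}}=T_{P_0}\sum_{j=0}^{m}(-1)^j\mathcal{F}_0^{\,j}.
\]

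Applying $\cL$ and using $\cL T_{P_0}=I+\mathcal{F}_0$, we obtain a telescoping sum:
\[
   \cL\,T_{P^{(m)}}=(I+\mathcal{F}_0)\sum_{j=0}^{m}(-\mathcal{F}_0)^j=I-(-\mathcal{F}_0)^{m+1}
   = I+(-1)^m\mathcal{F}_0^{\,m+1}.
\]
This gives~\eqref{EQ:Fm-operator}. It is the operator version of~\eqref{EQ:Fm}, and it avoids any pointwise manipulation of distributions. The identity $\cL T_{P_0}g=g+\mathcal{F}_0 g$ is applied with $g=\mathcal{F}_0^{\,j}f\in H^s$, where it holds by Proposition~\ref{PROP:Defect}.

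For the mapping property, Proposition~\ref{PROP:Defect} gives $\mathcal{F}_0:H^{r}(\Omega)\to H^{r+1}(\Omega)$ boundedly. That proof only used $T_{P_0}:H^r\to H^{r+2}$ and multiplication by the smooth, indeed analytic, fields $\nabla\gamma,\sigma$, so it holds for every $r\ge s$, and not just for $r=s$. Composing $m+1$ times yields $\mathcal{F}_m:H^s\to H^{s+m+1}$. Finally, $p_f^{(m)}=T\mathcal{F}_mf$ solves $\cL p=\mathcal{F}_mf\in H^{s+m+1}$ with zero Dirichlet data. Elliptic regularity on the analytic, hence smooth, domain with smooth coefficients then gives $p^{(m)}_f\in H^{s+m+3}\cap H_0^1$ with $\|p^{(m)}_f\|_{H^{s+m+3}}\lesssim\|\mathcal{F}_mf\|_{H^{s+m+1}}\lesssim\|f\|_{H^s}$.

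The main obstacle is the step where Proposition~\ref{PROP:Defect} is used at higher orders. We need the one-sided volume-potential estimate $\|\Phi*(\gamma^{-1}g)\|_{H^{r+2}(\Omega)}\lesssim\|g\|_{H^r(\Omega)}$ for the zero-extended $g$ at every order $r\le s+m$. As with Theorem~\ref{THM:H Partial}, we invoke the one-sided regularity of volume potentials from~\cite{McLean-Book00} under the standing assumption of sufficient boundary smoothness, which the analytic boundary supplies. We also note that the constants depend on $m$.
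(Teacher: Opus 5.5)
Your proposal is correct and follows the paper's proof essentially step for step. You identify $F_0^{\ast j}$ as the kernel of $\mathcal{F}_0^{\,j}$ to get $T_{P^{(m)}}=T_{P_0}\sum_{j=0}^m(-1)^j\mathcal{F}_0^{\,j}$, telescope using $\cL T_{P_0}=I+\mathcal{F}_0$, apply the one-order gain of Proposition~\ref{PROP:Defect} at every level $r\ge s$, and finish with elliptic regularity; your extra remarks (Fubini for weakly singular kernels, the one-sided volume-potential estimate at each order, $m$-dependent constants) only make explicit what the paper leaves implicit.
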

\begin{proof}
By~\eqref{EQ:LP0} the Schwartz kernel of $\mathcal{F}_0$ is $F_0$, so the kernel of the operator power $\mathcal{F}_0^{\,j}$ is the iterated kernel $F_0^{\ast j}$, and the $j$-th term of~\eqref{EQ:Pm} is the kernel of $T_{P_0}\mathcal{F}_0^{\,j}$. Hence,   $T_{P^{(m)}}=T_{P_0}\sum_{j=0}^{m}(-1)^j\mathcal{F}_0^{\,j}$. Applying $\cL\,T_{P_0}=I+\mathcal{F}_0$ termwise telescopes:
\[
   \cL\,T_{P^{(m)}}
   =(I+\mathcal{F}_0)\sum_{j=0}^{m}(-1)^j\mathcal{F}_0^{\,j}
   =\sum_{j=0}^{m}(-1)^j\bigl(\mathcal{F}_0^{\,j}+\mathcal{F}_0^{\,j+1}\bigr)
   =I+(-1)^m\mathcal{F}_0^{\,m+1},
\]
which is~\eqref{EQ:Fm-operator}. This is the operator form of the kernel identity~\eqref{EQ:Fm}. The proof of Proposition~\ref{PROP:Defect} applies at every intermediate smoothness level (the weighted Newtonian potential smooths by two orders on each $H^{s'}$, $s'\ge s$, and multiplication by the smooth fields $\nabla\gamma,\sigma$ preserves $H^{s'+1}$), so each of the $m+1$ factors of $\mathcal{F}_0$ in~\eqref{EQ:Fm-operator} gains one order and $\mathcal{F}_m:H^s\to H^{s+m+1}$. 

Finally, $p^{(m)}_f=T\mathcal{F}_m f$ has forcing in $H^{s+m+1}$ and zero Dirichlet data, so elliptic regularity yields $p^{(m)}_f\in H^{s+m+3}\cap H_0^1=H^{s+2+(m+1)}\cap H_0^1$.
\end{proof}

Applying the projection estimates separately to the two terms of~\eqref{EQ:Wf Split} gives Table~\ref{TAB:Rate Comparison}. The harmonic part has output factor $\eps_Y(h_Y)$ and boundary-input rate $\cO(h_X^{s+2-d/2})$. Its baseline output rate is $\cO(h_Y^{s+2-d/2})$, improved under the additional regularity assumptions of Corollary~\ref{COR:Global Rates}. The defect potential has the higher output rate $\cO(h_Y^{s+3-d/2})$ by Proposition~\ref{PROP:Defect}, but retains the volume-input rate $\cO(h_X^{s-d/2})$. For each fixed $m$, Corollary~\ref{COR:Defect-m} further improves the defect output exponent to $s+m+3-d/2$, with constants that may depend on $m$. These regularity gains alone imply neither convergence of the parametrix series as $m\to\infty$ nor a smaller total error for every sampling configuration.
\begin{table}[!htb]
\centering
\renewcommand{\arraystretch}{1.4}
\resizebox{\textwidth}{!}{%
\begin{tabular}{lccc}
\hline
 & \textbf{Direct} & \textbf{Decomposed: $\cL$-harmonic} & \textbf{Decomposed: defect} \\
\hline
Target operator              & $T:\cH_{\mathrm{src}}\to\cH_{\mathrm{sol}}$ & $E_\cL:\cH_X\to\cH_Y$ & $S_\cL\mathcal{F}_0:\cH_{\mathrm{src}}\to H^{s+3}(\Omega)\cap H_0^1(\Omega)$ \\
Training data                & $\{(f_\ell,u_\ell)\}_{\ell=1}^N$ & \multicolumn{2}{c}{$\{(f_\ell,\,w_{f_\ell})\}_{\ell=1}^N,\quad w_{f_\ell}=T_{P_0}f_\ell-u_\ell$} \\
Input space                  & $H^s(\Omega)$ & $\cH_X\hookrightarrow H^{s+3/2}(\partial\Omega)$ & $H^s(\Omega)$ \\
Output space                 & $H^{s+2}(\Omega)\cap H_0^1(\Omega)$ & $\cH_Y\subset\cV_\cL$ & $H^{s+3}(\Omega)\cap H_0^1(\Omega)$ \\
Learned kernel               & $G(x,y)$ & $E_\cL(x,\theta)$ & $p(x,y)$ \\
Kernel domain                & $\Omega\subset\bbR^d$ & $\partial\Omega$ (dimension $d-1$) & $\Omega\subset\bbR^d$ \\
Output proj.\ rate (in $h_Y$)& $\cO(h_Y^{s+2-d/2})$ & $\cO(\eps_Y(h_Y))$ & $\cO(h_Y^{s+3-d/2})$ \\
Input proj.\ rate (in $h_X$) & $\cO(h_X^{s-d/2})$ & $\cO(h_X^{s+2-d/2})$ & $\cO(h_X^{s-d/2})$ \\ 
\hline
\end{tabular}%
}
\caption{Comparison of direct versus decomposed learning of solution operator of~\eqref{EQ:Diff} using matched RKHSs and the sampling assumptions of Corollary~\ref{COR:Global Rates}. The $m$-th order parametrix $P^{(m)}$ (Corollary~\ref{COR:Defect-m}) lifts the defect output rate to $\cO(h_Y^{s+2+(m+1)-d/2})$. Its volume-input rate and the harmonic column are unchanged, and convergence of the Levi series as $m\to\infty$ is not asserted.
}
\label{TAB:Rate Comparison}
\end{table}

For the Poisson problem, the defect vanishes. In the analytic regime, the stated upper bounds admit the following asymptotic comparison; this does not compare the actual errors or assert optimality of the direct baseline.
\begin{proposition}\label{PROP:Bound Comparison}
Assume $\gamma\equiv1$, $\sigma\equiv0$, and the analytic regime of Corollary~\ref{COR:Global Rates}, including its compatible spaces and sampling estimates, for the induced boundary-data class. Use ideal matrices $\bM^\star$ and assume the direct interpolation stability constants are uniformly bounded. Compare volume and boundary input grids at a common fill-distance scale $h_X$, and output grids at scale $h_Y$. Let $\cE_{\mathrm{dir}}$ and $\cE_{\mathrm{dec}}$ denote the right-hand sides of~\eqref{EQ:Total Error Bound} and~\eqref{EQ:Total Error Corrector Form}, with fixed positive upper-bound constants and $\eps_Y(h_Y)=\widetilde C_Y e^{-c_Y/h_Y}$. Then
\[
   {e^{-c_Y /h_Y}}/{h_Y^{\,s+2-d/2}}\xrightarrow[h_Y\to0]{}0,
   \qquad
   {h_X^{\,s+2-d/2}}/{h_X^{\,s-d/2}}=h_X^{2}\xrightarrow[h_X\to0]{}0 ,
\]
so there is $h_0>0$ with $\cE_{\mathrm{dec}}\le\cE_{\mathrm{dir}}$ for all $h_X,h_Y\le h_0$.
\end{proposition}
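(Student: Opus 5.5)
The plan is to write both bounds in the common form of two nonnegative terms, one depending only on $h_Y$ (output) and one only on $h_X$ (input), and to compare them term by term. With $\gamma\equiv1$, $\sigma\equiv0$, the defect $F_0$ vanishes, so $P_0=\Phi$ and $w_f=E[(T_\Phi f)|_{\partial\Omega}]$. The decomposed bound \eqref{EQ:Total Error Corrector Form} therefore consists only of the harmonic-extension term:
\[
\cE_{\mathrm{dec}}=a_Y e^{-c_Y/h_Y}+a_X h_X^{\,s+2-d/2},\qquad a_Y:=M_{\cV}\widetilde C_Y,\quad a_X:=L_E R_\partial\widetilde C_X.
\]
The direct bound \eqref{EQ:Total Error Bound} reads
\[
\cE_{\mathrm{dir}}=b_Y h_Y^{\,s+2-d/2}+b_X h_X^{\,s-d/2},\qquad b_Y:=M_{\mathrm{sol}}C_Y,\quad b_X:=C_PL_TC_fC_X.
\]
All of $a_Y,a_X,b_Y,b_X$ are fixed positive constants. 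In particular, $b_X$ does not depend on $h_Y$, because of the assumed uniform bound on $C_P$.

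First I verify the two limits. For the output ratio, put $t=1/h_Y\to\infty$ and $\beta:=s+2-d/2>0$ (positive since $s>d/2$). Then $e^{-c_Y/h_Y}/h_Y^{\beta}=t^{\beta}e^{-c_Yt}\to0$, because the exponential dominates any polynomial. For the input ratio, the identity $h_X^{s+2-d/2}/h_X^{s-d/2}=h_X^2$ is immediate, and $h_X^2\to0$.

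Next I choose thresholds. Because $a_Y e^{-c_Y/h_Y}/(b_Yh_Y^{\beta})\to0$, there is $h_1>0$ with $a_Ye^{-c_Y/h_Y}\le b_Yh_Y^{\beta}$ for all $0<h_Y\le h_1$. Likewise $a_Xh_X^{s+2-d/2}\le b_Xh_X^{s-d/2}$ holds exactly when $h_X^2\le b_X/a_X$, which gives $h_2:=(b_X/a_X)^{1/2}$. I also take $h_0\le\min(h_1,h_2)$ and small enough that every fill-distance restriction underlying \eqref{EQ:Total Error Bound}, \eqref{EQ:Power Partial Exact}, and \eqref{EQ:Power Exp Global} is satisfied, for instance the $h\le h_0$ hypothesis of Theorem~\ref{THM:Power Func Decay}. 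Adding the two termwise inequalities then gives $\cE_{\mathrm{dec}}\le\cE_{\mathrm{dir}}$ for all $h_X,h_Y\le h_0$. Since the input and output parts are decoupled, no joint condition linking $h_X$ and $h_Y$ is needed.

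The main point requiring care is bookkeeping, not analysis. I must confirm that each constant is independent of the fill distances. For $a_X$ this relies on the minimum-source norm giving $R_\partial\le R$ (Theorem~\ref{THM:H Partial}) together with $L_E\le1$ (Lemma~\ref{LEM:Harmonic Contraction}). For $b_X$ it relies on the stated uniform bound $C_P\le C_{\mathrm{stab}}$. If any of these constants were allowed to vary with the mesh, the termwise comparison would fail. The conclusion should therefore be stated only as a comparison of upper bounds, as the proposition does.
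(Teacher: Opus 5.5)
Your proposal is correct and follows essentially the same route as the paper. Both split each bound into an $h_Y$-dependent output term and an $h_X$-dependent input term, compare them termwise using $e^{-c_Y/h}=o(h^k)$ and the exact ratio $h_X^2$, and then add the two inequalities. Your explicit thresholds ($h_1$, and $h_2=(b_X/a_X)^{1/2}$ in your naming) and your check that the constants are mesh-independent are harmless refinements of the paper's argument.
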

\begin{proof}
Let us write $\cE_{\mathrm{dir}}=A_Y+A_X$ and $\cE_{\mathrm{dec}}=B_Y+B_X$, where $A_Y=a_Yh_Y^{s+2-d/2}$, $A_X=a_Xh_X^{s-d/2}$, $B_Y=b_Ye^{-c_Y/h_Y}$, and $B_X=b_Xh_X^{s+2-d/2}$. The hypotheses allow fixed positive constants $a_Y,a_X,b_Y,b_X$, with $b_Y=M_{\cV}\widetilde C_Y$ and $b_X=L_E R_\partial\widetilde C_X$. Since $e^{-c_Y/h}=o(h^k)$ for every $k>0$, $B_Y/A_Y\to0$, while $B_X/A_X=(b_X/a_X)h_X^2\to0$. Thus $B_Y\le A_Y$ and $B_X\le A_X$ for sufficiently small fill distances, proving the comparison.
\end{proof} 

Let us finish this section with the following remark. The energy inner product~\eqref{EQ:Energy IP} is symmetric precisely because $\cL$ is self-adjoint. This is what makes $G$ symmetric and a reproducing kernel (Theorem~\ref{THM:HG Gen}). The construction therefore extends unchanged to a symmetric matrix coefficient, $\cL=-\nabla\cdot(A\nabla)+\sigma$ with $A=A^\top$ uniformly elliptic and $\sigma\ge0$, using the energy form $a(u,v)=\int_\Omega(A\nabla u\cdot\nabla v+\sigma uv)\,dx$. With a genuine drift term $b\cdot\nabla$, the operator is no longer self-adjoint. Therefore, $G$ loses its symmetry, and the orthogonal direct sum of Theorem~\ref{THM:Direct Sum Gen} fails. The decomposition $G=P_0-H_0$ and the learning reformulation~\eqref{EQ:wf BVP}, nonetheless, still make sense, but the analysis must then proceed in classical Sobolev spaces, with the adjoint Green's function playing the role of the second factor.

\section{Extension to nonlinear solution operators}
\label{SEC:Nonlinear}
 
We now generalize the kernel decomposition framework to learning a weakly nonlinear (semi-linear) solution operator. We show that the kernel decomposition continues to carry the singular part analytically, leaving only a smooth nonlinear correction. The main contrast to the linear case is that the coordinate map of the full solution operator is now nonlinear, so it is no longer represented exactly by a matrix. Rather than approximating it with a nonlinear function class, we exploit the decomposition iteratively: the only learned object remains a linear operator, fitted exactly as in Section~\ref{SEC:Learning-RKHS}. 
The nonlinearity is resolved by a contractive fixed-point iteration whose error bound is set by the linear surrogate alone.

\subsection{Nonlinear setup}

With the same assumptions on $\Omega$ and $\gamma$, let $u$ be the solution to the semilinear Dirichlet problem
\begin{equation}\label{EQ:Diff Nonl}
   \cL_0 u + q(u) = -\nabla\cdot(\gamma\nabla u) + q(u) = f\ \text{ in }\Omega,
   \qquad u=0\ \text{ on }\partial\Omega,
\end{equation}
with a nonlinearity $q\in \cC^\infty(\bbR)$ that is monotone and pins the trivial state, namely,
\begin{equation}\label{EQ:Monotone q}
   q(0)=0,\quad \mbox{and}\quad q'(t)\ge 0\quad\forall\,\, t\in\bbR.
\end{equation}
A canonical example of $q$ is the cubic $q(u)=u^3$. More generally, 
the choice $q(u)=\sum_{j\ge1}\beta_j\,u^{2j-1}$, with $\beta_j\ge0$, appears as the semilinear diffusion model in quantitative photoacoustics~\cite{KrLiReScZh-SIAM23,ReZh-SIAM26}.

We retain the source and solution spaces as before, 
$\cH_{\mathrm{src}}=H^s(\Omega)$ and $\cH_{\mathrm{sol}}=H^{s+2}(\Omega)\cap H_0^1(\Omega)$ with $s>d/2$, and write $T:f\mapsto u$ for the (now nonlinear) solution operator of~\eqref{EQ:Diff Nonl}. Throughout the section, $\tau$ denotes the torsion function of $\cL_0$, that is, the function of Lemma~\ref{LEM:Poisson Assumptions} with $\sigma\equiv0$ (the comparison bounds below require this larger torsion function).

\begin{proposition}\label{PROP:Well-posedness Nonl}
Under the assumption~\eqref{EQ:Monotone q}, ~\eqref{EQ:Diff Nonl} has a unique solution for every $f\in\cH_{\mathrm{src}}$. Moreover, we have
\begin{enumerate}[label=(\roman*)]
\item $T:\cH_{\mathrm{src}}\to\cH_{\mathrm{sol}}$ is bounded on bounded sets and $L^\infty$-Lipschitz with $L_T\le\|\tau\|_{L^\infty(\Omega)}$. Hence, $T$ satisfies assumptions~\hyperlink{Assum-A1}{\Ai}--\hyperlink{Assum-A2}{\Aii} on any bounded $\cM\subset\cH_{\mathrm{src}}$;
\item $T$ is Fr\'echet $\cC^\infty$, with $DT(f)=(\cL_0+q'(Tf))^{-1}$. If $q$ is real-analytic, so is $T$.
\end{enumerate}
\end{proposition}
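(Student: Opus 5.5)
I would prove existence and uniqueness by monotone-operator theory, then upgrade regularity by bootstrapping, and finally obtain the Lipschitz and smoothness statements from comparison principles and the implicit function theorem.

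\textbf{Existence and uniqueness.} For $f\in H^s(\Omega)\subset L^\infty(\Omega)$ (since $s>d/2$), consider on $H_0^1(\Omega)$ the operator $A(u)=\cL_0u+q(u)$. The principal part $\cL_0$ is coercive on $H_0^1$, and $q$ is nondecreasing. The difficulty is that for a general smooth $q$ (e.g.\ exponential growth), $q(u)$ need not lie in $H^{-1}$.

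To handle this, I would first truncate $q$ outside $[-M,M]$, with $M:=\|\tau\|_{L^\infty}\|f\|_{L^\infty}$. The truncated $q_M$ is monotone and Lipschitz, so Browder--Minty gives a unique weak solution of the truncated problem. Comparison with $\pm\|f\|_{L^\infty}\tau$ then shows $|u|\le M$, because $\cL_0(\|f\|_\infty\tau)+q_M(\|f\|_\infty\tau)\ge\|f\|_\infty\ge f$, using $q_M\ge0$ on $[0,\infty)$ since $q(0)=0$ and $q'\ge0$. Hence $q_M(u)=q(u)$, and $u$ solves the original problem. Uniqueness follows by testing the difference of two solutions against $(u_1-u_2)$ and using monotonicity of $q$.

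\textbf{Regularity.} Since $u\in L^\infty$ and $q$ is smooth, $q(u)\in H^1$ and $u\in H^2$. I would then bootstrap with the elliptic estimate $\|u\|_{H^{k+2}}\lesssim\|f-q(u)\|_{H^k}$ and the Moser-type composition bound: $q(u)\in H^k$ whenever $u\in H^k\cap L^\infty$, with constant depending on $\|u\|_{L^\infty}$ and $\|u\|_{H^k}$. This gives $u\in H^{s+2}$, with a bound depending only on $\|f\|_{H^s}$. That is boundedness on bounded sets.

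\textbf{$L^\infty$-Lipschitz bound.} Set $w=u_1-u_2$. Then $\cL_0w+c(x)w=f_1-f_2$, where $c=\int_0^1q'(u_2+t w)\,dt\ge0$ is bounded. Comparing $w$ with $\pm\|f_1-f_2\|_\infty\tau$ through the weak maximum principle for $\cL_0+c$ gives $|w|\le\|f_1-f_2\|_\infty\tau$; the case $w>0$ uses $c\tau\ge0$. This yields $L_T\le\|\tau\|_\infty$. It also explains why the torsion function with $\sigma\equiv0$ is the one used: $c$ cannot be controlled from below.

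\textbf{Fréchet smoothness.} I expect this step to need the most care. I would apply the implicit function theorem to
\[
\cF:\cH_{\mathrm{sol}}\times\cH_{\mathrm{src}}\to\cH_{\mathrm{src}},\qquad \cF(u,f)=\cL_0u+q(u)-f .
\]
Because $H^{s+2}$ is an algebra embedded in $L^\infty$, the Nemytskii map $u\mapsto q(u)$ is $C^\infty$ from $H^{s+2}$ into $H^{s+2}\hookrightarrow H^s$. It is analytic if $q$ is analytic, via its power-series expansion and the algebra property.

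The partial derivative $D_u\cF=\cL_0+q'(u)$ has nonnegative bounded smooth potential. It is therefore injective by the maximum principle, and an isomorphism $\cH_{\mathrm{sol}}\to H^s$ by Fredholm theory and elliptic regularity. The implicit function theorem, together with the uniqueness established above, then gives $T\in C^\infty$ (respectively analytic) with
\[
DT(f)=\bigl(\cL_0+q'(Tf)\bigr)^{-1}.
\]
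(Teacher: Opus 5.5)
Your proposal is correct and follows essentially the paper's route: truncate $q$ at level $\|\tau\|_{L^\infty}\|f\|_{L^\infty}$, solve the truncated problem by monotone/variational methods, remove the truncation by comparison with the torsion function, bootstrap elliptic regularity, use a mean-value linearization with a comparison principle for $L_T\le\|\tau\|_{L^\infty}$, and apply the implicit function theorem. The differences are cosmetic. You use Browder--Minty where the paper minimizes a convex functional, an explicit barrier $\pm\|f\|_\infty\tau$ where the paper uses the domination $G_c\le G_0$, and Fredholm theory plus the maximum principle where the paper uses Lax--Milgram coercivity to invert $\cL_0+q'(u)$; your explicit use of uniqueness to identify the local implicit-function branch with $T$ is a small improvement in care.
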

\begin{proof}
Fix $M\ge\|\tau\|_{L^\infty}\|f\|_{L^\infty}$ and let $q_M(t):=q(\max\{-M,\min\{t,M\}\})$, a bounded, Lipschitz, nondecreasing function with $q_M=q$ on $[-M,M]$ and $q_M(0)=0$. Let $Q_M'=q_M$ with $Q_M(0)=0$, so that $Q_M\ge0$ is convex with at most quadratic growth. The functional $u\mapsto\int_\Omega\bigl(\tfrac{\gamma}{2}|\nabla u|^2+Q_M(u)-fu\bigr)dx$ is finite, strictly convex, coercive and continuous on $H_0^1(\Omega)$, hence has a unique minimizer $u_M\in H_0^1(\Omega)$, which is the unique weak solution of $\cL_0u_M+q_M(u_M)=f$~\cite{Evans-Book10}. The comparison argument below uses only that the nonlinearity is nondecreasing and Lipschitz on the range of the solutions (with $c$ the corresponding difference quotient), so it applies to $q_M$. With $f_2=0$, it gives $\|u_M\|_{L^\infty}\le\|\tau\|_{L^\infty}\|f\|_{L^\infty}\le M$. Hence $q_M(u_M)=q(u_M)$ and $u_M$ solves~\eqref{EQ:Diff Nonl}. Uniqueness for~\eqref{EQ:Diff Nonl} follows from the same comparison argument applied to two solutions, both of which are bounded by the same argument. No growth condition on $q$ is needed. Elliptic regularity for $\cL_0 u=f-q(u)$ and the Banach-algebra property of $H^s(\Omega)$ ($s>d/2$) give $u\in\cH_{\mathrm{sol}}$ with a bound on bounded sets. 

For the Lipschitz estimate, we set $v=Tf_1-Tf_2$. Subtracting the two equations and using the mean value theorem, $\cL_0 v + c\,v=f_1-f_2$ with $c(x)=q'(\xi(x))\ge0$ and $v|_{\partial\Omega}=0$. The Green's function $G_c$ of $\cL_0+c$ is nonnegative and, by the comparison principle, dominated by that of $\cL_0$, so \[\|v\|_{L^\infty}\le\bigl(\sup_x\!\int_\Omega G_c(x,y)\,dy\bigr)\|f_1-f_2\|_{L^\infty} \le\|\tau\|_{L^\infty}\|f_1-f_2\|_{L^\infty},\]
leading to $L_T\le\|\tau\|_{L^\infty}$. Finally, the map $\Psi(u,f):=\cL_0 u+q(u)-f$ has $\partial_u\Psi(u,f)=\cL_0+q'(u)$, which is boundedly invertible from $H^{s+2}(\Omega)\cap H_0^1(\Omega)$ onto $H^s(\Omega)$ (coercivity with $q'(u)\ge0$ gives invertibility at the $H_0^1$ level, and elliptic regularity lifts it to the stated spaces, which is the setting the implicit function theorem between $\cH_{\mathrm{src}}$ and $\cH_{\mathrm{sol}}$ requires). The implicit function theorem yields $T\in \cC^\infty$ (analytic if $q$ is), with the stated derivative.
\end{proof}

\subsection{Iterative decomposition}
\label{SUBSEC:Iterative Decomp}
 
We now develop an iterative procedure assuming that the nonlinear effect is weak. Fix a \emph{frozen coefficient} $\sigma_*\in \cC^\infty(\overline\Omega)$ with
$\sigma_*\ge0$ and set
\begin{equation}\label{EQ:Chord Operator}
   \cL_*:=\cL_0+\sigma_*=-\nabla\cdot(\gamma\nabla)+\sigma_*,
   \qquad S_*:=\cL_*^{-1}\ \text{ with zero Dirichlet data}.
\end{equation} 
We consider the chord (frozen-coefficient Newton) iteration for~\eqref{EQ:Diff Nonl}: 
$u^{(k+1)}=u^{(k)}-S_*\bigl(\cL_0u^{(k)}+q(u^{(k)})-f\bigr)$. Applying $S_*\cL_*=I$ on $H_0^1(\Omega)$ removes the differential operator from the right-hand side and leaves the implementable source form
\begin{equation}\label{EQ:Chord Source Form}
   u^{(k+1)}=S_*\bigl[\,f-q(u^{(k)})+\sigma_*\,u^{(k)}\,\bigr],
   \qquad k=0,1,2,\dots
\end{equation}
which requires only point values of $u^{(k)}$. The fixed point of~\eqref{EQ:Chord Source Form} is $u=S_*[f-q(u)+\sigma_*u]$, which is equivalent to $\cL_0u+q(u)=f$.

\begin{lemma}\label{LEM:Chord Contraction}
Let $J\subset\bbR$ be an interval containing the ranges of $u$ and of all iterates $\{u^{(k)}\}_{k=0}^\infty$, and set
\begin{equation}\label{EQ:Kappa}
\kappa_*:=\|\tau_{\sigma_*}\|_{L^\infty(\Omega)}\,
\sup_{x\in\overline\Omega,\,t\in J}\bigl|\sigma_*(x)-q'(t)\bigr| ,
\end{equation}
where $\tau_{\sigma_*}$ solves $\cL_*\tau_{\sigma_*}=1$ in $\Omega$, $\tau_{\sigma_*}|_{\partial\Omega}=0$. Then the exact iteration~\eqref{EQ:Chord Source Form} satisfies 
\[\|u^{(k+1)}-u\|_{L^\infty}\le\kappa_*\|u^{(k)}-u\|_{L^\infty}.\] Moreover, $\|\tau_{\sigma_*}\|_{L^\infty}\le\|\tau\|_{L^\infty}$ with $\tau$ the torsion function of Lemma~\ref{LEM:Poisson Assumptions}.
\end{lemma}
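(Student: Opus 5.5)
The plan is to subtract the fixed-point identity from the iteration and reduce the error to a single application of $S_*$ with a pointwise-bounded source. Write $e^{(k)}:=u^{(k)}-u$. The exact solution satisfies $u=S_*[f-q(u)+\sigma_*u]$, and the iteration~\eqref{EQ:Chord Source Form} is linear in $f$ through $S_*$. Subtracting the two relations gives
\[
   e^{(k+1)}=S_*\bigl[\sigma_*\,e^{(k)}-\bigl(q(u^{(k)})-q(u)\bigr)\bigr].
\]
By the mean value theorem, pointwise in $x$ we have $q(u^{(k)}(x))-q(u(x))=q'(\xi_k(x))\,e^{(k)}(x)$, with $\xi_k(x)$ between $u(x)$ and $u^{(k)}(x)$. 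Hence $\xi_k(x)\in J$, since $J$ is an interval containing both ranges. Therefore
\[
   e^{(k+1)}=S_*\bigl[(\sigma_*-q'(\xi_k))\,e^{(k)}\bigr],
\]
and the source is bounded pointwise by $\sup_{x,t\in J}|\sigma_*(x)-q'(t)|\,\|e^{(k)}\|_{L^\infty}$.

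The second step is the $L^\infty$ bound for $S_*$, argued exactly as in Lemma~\ref{LEM:Poisson Assumptions} with $\sigma$ replaced by $\sigma_*\ge0$. The Green's function $G_*$ of $\cL_*$ is nonnegative by the maximum principle, so
\[
   |S_*g(x)|\le\int_\Omega G_*(x,y)|g(y)|\,dy\le\|g\|_{L^\infty}\,\tau_{\sigma_*}(x),
\]
because $\int_\Omega G_*(x,y)\,dy=\tau_{\sigma_*}(x)$. Combining this with the pointwise bound on the source yields $\|e^{(k+1)}\|_{L^\infty}\le\kappa_*\|e^{(k)}\|_{L^\infty}$. One measurability remark is needed: $\xi_k$ need not be measurable. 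I will avoid this by using the difference quotient $c_k:=(q(u^{(k)})-q(u))/e^{(k)}$, set equal to $q'(u)$ where $e^{(k)}=0$. This function is measurable and takes values in $\{q'(t):t\in J\}$, and the argument goes through with $c_k$ in place of $q'(\xi_k)$.

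For the final claim, $\|\tau_{\sigma_*}\|_{L^\infty}\le\|\tau\|_{L^\infty}$ with $\tau$ the torsion function of $\cL_0$, I will use comparison. Both $\tau_{\sigma_*}$ and $\tau$ vanish on $\partial\Omega$, and $\tau_{\sigma_*}\ge0$ by the maximum principle. Their difference satisfies
\[
   \cL_0(\tau-\tau_{\sigma_*})=1-(1-\sigma_*\tau_{\sigma_*})=\sigma_*\tau_{\sigma_*}\ge0,
\]
so the weak maximum principle for $\cL_0$ gives $\tau\ge\tau_{\sigma_*}\ge0$, which gives the bound. Equivalently, $G_*\le G_0$, as already used in the proof of Proposition~\ref{PROP:Well-posedness Nonl}.

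I expect no serious obstacle. The only delicate points are ensuring the intermediate values of $q'$ lie in $J$, handled by the interval hypothesis, and the measurability of the coefficient, handled by the difference quotient. I would also state that the regularity of $u^{(k)}$ needed to apply $S_*$ to a bounded source is covered by elliptic theory, since the sources lie in $L^\infty$.
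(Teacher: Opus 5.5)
Your proof is correct and follows the paper's argument: subtract the fixed-point identity, write the nonlinear difference as $q'(\xi)e^{(k)}$, and bound $S_*$ in $L^\infty$ by $\|\tau_{\sigma_*}\|_{L^\infty}$ via the nonnegativity of $G_*$ (your measurable difference quotient is a harmless refinement). For the torsion comparison the paper simply cites $G_*\le G_0$, which is equivalent to your direct maximum-principle argument applied to $\cL_0(\tau-\tau_{\sigma_*})=\sigma_*\tau_{\sigma_*}\ge0$.
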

\begin{proof}
Let us write $e^{(k)}=u^{(k)}-u$. Subtracting $u=S_*[f-q(u)+\sigma_*u]$ from~\eqref{EQ:Chord Source Form} and using the mean value theorem, we have that $q(u^{(k)})-q(u)=q'(\xi)e^{(k)}$ pointwise for some $\xi$ between $u^{(k)}$ and $u$. Therefore, we have
\[
   e^{(k+1)}=S_*\bigl[(\sigma_*-q'(\xi))\,e^{(k)}\bigr].
\]
Since $\sigma_*\ge0$, the Green's function $G_*$ of $\cL_*$ is nonnegative. This gives us
\[
    \|S_*g\|_{L^\infty}\le\bigl(\sup_x\int_\Omega G_*(x,y)dy\bigr)\|g\|_{L^\infty}
=\|\tau_{\sigma_*}\|_{L^\infty}\|g\|_{L^\infty}\,.
\]
Applying this with
$g=(\sigma_*-q'(\xi))e^{(k)}$ gives the contraction. The comparison
$\|\tau_{\sigma_*}\|_\infty\le\|\tau\|_\infty$ follows from $G_*\le G_0$. The latter is a consequence of the maximum principle and $\sigma_*\ge0$.
\end{proof}

Let $\wh S_*$ denote the decomposed surrogate for $S_*$ built in Section~\ref{SUBSEC:Improved Algo}, and run
\begin{equation}\label{EQ:Chord Perturbed}
   \wh u^{(k+1)}=\wh S_*\bigl[\,g^{(k)}\,\bigr],
   \qquad g^{(k)}:=f-q(\wh u^{(k)})+\sigma_*\,\wh u^{(k)} .
\end{equation}
We write $\psi(x,t):=q(t)-\sigma_*(x)\,t$. The source reads $g^{(k)}=f-\psi(\cdot,\wh u^{(k)})$. We assume the natural invariance hypothesis, which holds whenever $\kappa_*<1$ and the surrogate is accurate enough:

\smallskip
\noindent ($\bH$)\ \ There is a bounded set $\cM_*\subset\cH_{\mathrm{src}}$ with
$g^{(k)}\in\cM_*$ for all $k$, and the surrogate satisfies the uniform bound
\begin{equation}\label{EQ:Eta}
   \eta_*:=\sup_{g\in\cM_*}\bigl\|S_*g-\wh S_*g\bigr\|_{L^\infty(\Omega)}<\infty .
\end{equation}
\smallskip
Hypothesis ($\bH$) is an invariance statement. While we pose it as a hypothesis, in Appendix~\ref{SEC:Proof of Hypothesis  (H)}, we provide a short proof that this hypothesis actually holds for small data $f$.

Under hypothesis ($\bH$), the convergence of the learned chord iteration follows from the standard argument.
\begin{theorem}\label{THM:Chord Convergence}
Assume $\kappa_*<1$, that the ranges of $Tf$ and of all perturbed iterates $\{\wh u^{(k)}\}_{k\ge0}$ lie in the interval $J$ of~\eqref{EQ:Kappa}, and {\rm ($\bH$)}. Then for every $k\ge0$,
\begin{equation}\label{EQ:Chord Error}
   \bigl\|\wh u^{(k)}-Tf\bigr\|_{L^\infty(\Omega)}
   \;\le\;\kappa_*^{\,k}\,\bigl\|\wh u^{(0)}-Tf\bigr\|_{L^\infty(\Omega)}
   \;+\;\frac{1-\kappa_*^{\,k}}{1-\kappa_*}\;\eta_* ,
\end{equation}
and consequently
\begin{equation}\label{EQ:Chord Limsup}
   \limsup_{k\to\infty}\bigl\|\wh u^{(k)}-Tf\bigr\|_{L^\infty(\Omega)}
   \;\le\;\frac{\eta_*}{1-\kappa_*} .
\end{equation}
\end{theorem}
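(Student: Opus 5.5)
The plan is the standard perturbed-contraction argument: derive a one-step recursion for the error $e^{(k)}:=\wh u^{(k)}-Tf$ and unroll it. Write $u=Tf$, which is the fixed point $u=S_*[f-q(u)+\sigma_*u]=S_*[f-\psi(\cdot,u)]$ of~\eqref{EQ:Chord Source Form}. For each $k$, insert the exact chord step applied to the perturbed iterate:
\[
   \wh u^{(k+1)}-u=\bigl(\wh S_*g^{(k)}-S_*g^{(k)}\bigr)+\bigl(S_*g^{(k)}-S_*[f-\psi(\cdot,u)]\bigr).
\]
Both terms are controlled in $L^\infty(\Omega)$.

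For the first term, hypothesis ($\bH$) gives $g^{(k)}\in\cM_*$, so by~\eqref{EQ:Eta} its $L^\infty$ norm is at most $\eta_*$.

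For the second term, use linearity of $S_*$ to write it as $S_*\bigl[\psi(\cdot,u)-\psi(\cdot,\wh u^{(k)})\bigr]=S_*\bigl[(\sigma_*-q'(\xi))\,e^{(k)}\bigr]$. Here the mean value theorem is applied pointwise, with $\xi(x)$ between $u(x)$ and $\wh u^{(k)}(x)$. Since both ranges lie in the interval $J$, we have $\xi(x)\in J$, so $|\sigma_*-q'(\xi)|\le\sup_{x,t\in J}|\sigma_*(x)-q'(t)|$. The positivity bound from the proof of Lemma~\ref{LEM:Chord Contraction}, $\|S_*g\|_{L^\infty}\le\|\tau_{\sigma_*}\|_{L^\infty}\|g\|_{L^\infty}$, then gives the bound $\kappa_*\|e^{(k)}\|_{L^\infty}$. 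This is exactly the contraction estimate of Lemma~\ref{LEM:Chord Contraction}, now applied along the perturbed orbit; that is why the hypothesis requires the perturbed iterates, and not only the exact ones, to lie in $J$.

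Combining the two terms gives
\[
   \|e^{(k+1)}\|_{L^\infty}\le\kappa_*\|e^{(k)}\|_{L^\infty}+\eta_*.
\]
Induction on $k$ gives $\|e^{(k)}\|\le\kappa_*^k\|e^{(0)}\|+\eta_*\sum_{j=0}^{k-1}\kappa_*^j$, and the geometric sum is $(1-\kappa_*^k)/(1-\kappa_*)$, which is~\eqref{EQ:Chord Error}. Since $\kappa_*<1$, letting $k\to\infty$ sends the first term to zero and gives~\eqref{EQ:Chord Limsup}.

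The only delicate point is the second-term bound. One must check that $\xi$ is measurable and that the mean-value representation is legitimate pointwise. One way is to write $\psi(\cdot,u)-\psi(\cdot,\wh u)=\bigl(\int_0^1(q'-\sigma_*)(\cdot)\,d\theta\bigr)e^{(k)}$ with $q'$ evaluated along the segment between $u$ and $\wh u^{(k)}$; this avoids measurable selection. One must also confirm that $S_*$ acts on the resulting bounded source through its nonnegative Green's function. Both follow as in Lemma~\ref{LEM:Chord Contraction}, so I expect no real obstacle beyond this bookkeeping.
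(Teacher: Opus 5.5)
Your proposal is correct and is essentially the paper's proof. The paper uses the same splitting of $\wh u^{(k+1)}-Tf$ into the surrogate error $(\wh S_*-S_*)g^{(k)}$, bounded by $\eta_*$ via ($\bH$), and the exact-chord term $S_*[(\sigma_*-q'(\xi))e^{(k)}]$, bounded by $\kappa_*\|e^{(k)}\|_{L^\infty}$ as in Lemma~\ref{LEM:Chord Contraction} because the ranges lie in $J$; it then unrolls the scalar recursion. One harmless sign slip: in your integral-form remark the averaged factor should be $\int_0^1(\sigma_*-q')\,d\theta$ rather than $\int_0^1(q'-\sigma_*)\,d\theta$, which does not matter since only its absolute value enters the bound.
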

\begin{proof}
Let $e^{(k)}=\wh u^{(k)}-u$ with $u=Tf=S_*[f-q(u)+\sigma_*u]$. Splitting
\[
   \wh u^{(k+1)}-u=\underbrace{\bigl(\wh S_*-S_*\bigr)g^{(k)}}_{\text{surrogate error}}
   \;+\;\underbrace{S_*\bigl[g^{(k)}-\bigl(f-q(u)+\sigma_*u\bigr)\bigr]}_{\text{contraction}},
\]
the first term is bounded by $\eta_*$ using ($\bH$), and the second equals $S_*[(\sigma_*-q'(\xi))e^{(k)}]$ exactly as in Lemma~\ref{LEM:Chord Contraction} (Here, the mean-value point $\xi$ lies between $\wh u^{(k)}(x)$ and $u(x)$, which is why the ranges of the perturbed iterates and of $u=Tf$ are assumed to lie in the interval $J$ defining $\kappa_*$. Both are guaranteed under the small-data hypothesis of Appendix~\ref{SEC:Proof of Hypothesis  (H)}. The latter follows because the map $v\mapsto S_*[f-\psi(\cdot,v)]$ is a $\kappa_*$-contraction of the $L^\infty$-ball of radius $\rho$ into itself, so its fixed point $u$ lies in that ball). Hence it is bounded by $\kappa_*\|e^{(k)}\|_{L^\infty}$. Therefore, $\|e^{(k+1)}\|_{L^\infty}\le\eta_*+\kappa_*\|e^{(k)}\|_{L^\infty}$, and iterating this scalar recursion gives the geometric sum~\eqref{EQ:Chord Error}. Letting $k\to\infty$ with $\kappa_*<1$ yields~\eqref{EQ:Chord Limsup}.
\end{proof}

The inequality~\eqref{EQ:Chord Limsup} bounds the nonlinear error by the linear approximation error $\eta_*$, amplified by $(1-\kappa_*)^{-1}$. The linear rates therefore transfer whenever their hypotheses hold uniformly for all transformed inputs generated by the iteration, including the defect-potential contribution.
\begin{corollary}\label{COR:Chord Rates}
Let $\wh S_*$ be the decomposed approximation of Section~\ref{SUBSEC:Improved Algo} for $\cL_*$. Assume the hypotheses of Corollary~\ref{COR:Global Rates} hold uniformly for the boundary traces $h_g=(T_{P_0}g)|_{\partial\Omega}$, $g\in\cM_*$, with $\|h_g\|_{\cH_X}\le R_\partial$, and use the ideal projected matrix for the harmonic extension. Let $D_*g=S_*\mathcal F_{0,*}g$ denote the defect-potential operator, where $\mathcal F_{0,*}=\cL_*T_{P_0}-I$, and let $\wh D_*$ be its approximation. Set
\[
\Delta_{\mathrm{def}}:=\sup_{g\in\cM_*}
\|D_*g-\wh D_*g\|_{L^\infty(\Omega)}.
\]
For the componentwise reconstruction $\wh S_*=T_{P_0}-\wh E_{\cL_*}\mathfrak R T_{P_0}-\wh D_*$,~\eqref{EQ:Total Error Corrector Form} and the triangle inequality give
\[
   \eta_*\;\le\;M_{\cV}\,\eps_Y(h_Y)
   +L_E R_\partial\,\widetilde C_X\,h_X^{\,s+2-d/2}
   +\Delta_{\mathrm{def}},
\]
where $M_{\cV}=R_\partial\|E_{\cL_*}\|_{\cH_X\to\cH_Y}$. The Sobolev regime of Corollary~\ref{COR:Global Rates}(i) gives $\eps_Y(h_Y)\le C_Yh_Y^{t-d/2}$. The exponential regime (ii) additionally requires analytic $\partial\Omega$, $\gamma$, and $\sigma_*$, together with the uniform analytic input, output and sampling assumptions for the entire class $\{h_g:g\in\cM_*\}$. 
Moreover, under the hypotheses of Theorem~\ref{THM:Chord Convergence}, with the solution and all perturbed iterates taking values in the interval $J$ of~\eqref{EQ:Kappa}, we have
\begin{equation}\label{EQ:Chord Total}
   \limsup_{k\to\infty}\bigl\|\wh u^{(k)}-Tf\bigr\|_{L^\infty(\Omega)}
   \;\le\;\frac{1}{1-\kappa_*}
   \Bigl[\,M_{\cV}\eps_Y(h_Y)
   +L_E R_\partial\,\widetilde C_X\,h_X^{\,s+2-d/2}
   +\Delta_{\mathrm{def}}\,\Bigr].
\end{equation}
\end{corollary}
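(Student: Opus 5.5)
The proof has two parts. First we bound the linear surrogate error $\eta_*$ uniformly over $\cM_*$. Then we insert that bound into Theorem~\ref{THM:Chord Convergence}.

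For the first part, fix $g\in\cM_*$. Apply the splitting~\eqref{EQ:Wf Split} with $\cL_*$ in place of $\cL$, using that $S_*g$ has zero Dirichlet data and $T_{P_0}g$ is explicit:
\[
S_*g=T_{P_0}g-E_{\cL_*}\mathfrak R\,T_{P_0}g-D_*g .
\]
Both $S_*$ and $\wh S_*=T_{P_0}-\wh E_{\cL_*}\mathfrak R T_{P_0}-\wh D_*$ contain the same explicit term $T_{P_0}g$, so it cancels in the difference. This gives
\[
S_*g-\wh S_*g=-\bigl(E_{\cL_*}-\wh E_{\cL_*}\bigr)h_g-\bigl(D_*g-\wh D_*g\bigr),\qquad h_g=\mathfrak R T_{P_0}g .
\]
By the triangle inequality in $L^\infty(\Omega)$, the second piece is at most $\Delta_{\mathrm{def}}$ by its definition, after taking the supremum over $g$.

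For the first piece we use Corollary~\ref{COR:Global Rates}, applied to $\cL_*$ with $\sigma_*$ as the zeroth-order coefficient. Its hypotheses are assumed to hold uniformly over the class $\{h_g:g\in\cM_*\}$, with $\|h_g\|_{\cH_X}\le R_\partial$. The corollary then yields~\eqref{EQ:Total Error Corrector Form} with constants independent of $g$, namely $M_{\cV}\eps_Y(h_Y)+L_ER_\partial\widetilde C_Xh_X^{s+2-d/2}$.

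Two points need checking here:
\begin{itemize}
\item $L_E\le 1$ still holds for $E_{\cL_*}$, because Lemma~\ref{LEM:Harmonic Contraction} only needs $\sigma_*\ge0$.
\item The ideal projected matrix reproduces $P_YE_{\cL_*}P_X$ exactly, as in~\eqref{EQ:GStar}.
\end{itemize}
Taking the supremum over $\cM_*$ gives the stated bound on $\eta_*$. The two regime statements then follow directly. In the Sobolev regime, Corollary~\ref{COR:Global Rates}(i) gives $\eps_Y\le C_Yh_Y^{t-d/2}$. In the exponential regime, Corollary~\ref{COR:Global Rates}(ii) applies once its analytic and sampling hypotheses are imposed on the whole trace class; these are exactly the listed additional assumptions, with $\sigma_*$ replacing $\sigma$.

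For the second part, the hypotheses of Theorem~\ref{THM:Chord Convergence} are assumed: $\kappa_*<1$, all ranges lie in $J$, and ($\bH$) holds. Then~\eqref{EQ:Chord Limsup} gives $\limsup_k\|\wh u^{(k)}-Tf\|_{L^\infty}\le\eta_*/(1-\kappa_*)$. The factor $1/(1-\kappa_*)$ is positive, so substituting the bound on $\eta_*$ yields~\eqref{EQ:Chord Total}.

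The main obstacle is the uniformity in the first part. Corollary~\ref{COR:Global Rates} is stated for a ball of radius $R_\partial$ in $\cH_X$, so we must confirm that every iteration source $g^{(k)}\in\cM_*$ produces a trace inside that ball. Here $R_\partial$ can be taken as the $H^s$-radius of $\cM_*$, with the minimum-source norm of Theorem~\ref{THM:H Partial} applied to $\Lambda(\gamma^{-1}g)$. We must also confirm that $M_{\cV}$ is finite uniformly, which follows from the assumed boundedness of $E_{\cL_*}:\cH_X\to\cH_Y$. Everything else is a direct combination of earlier results.
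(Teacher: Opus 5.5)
Your proposal is correct and follows the paper's argument: cancel the common explicit term $T_{P_0}g$, bound the harmonic-extension error by \eqref{EQ:Total Error Corrector Form} and the defect error by $\Delta_{\mathrm{def}}$ via the triangle inequality, take the supremum over $\cM_*$ to bound $\eta_*$, and substitute into \eqref{EQ:Chord Limsup}. One remark: the bound $\|h_g\|_{\cH_X}\le R_\partial$ is a hypothesis of the corollary, not an obstacle you need to resolve, and identifying $R_\partial$ with the $H^s$-radius of $\cM_*$ would in any case require the multiplier constant of $\gamma^{-1}$ on $H^s(\Omega)$.
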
 

Table~\ref{TAB:Chord Comparison} summarizes the conditional rates for the semilinear problem. Only linear corrections are learned: their finite-dimensional projections, not the full inverse $S_*$, admit exact matrix representations. The iteration adds a contraction requirement and logarithmically many applications per query.
\begin{table}[!htb]
\centering
\renewcommand{\arraystretch}{1.4}
\begin{tabular}{lc}
\hline
 & \textbf{Iterative decomposition for nonlinear problem} \\
\hline
Learned object            & Projected linear corrections (ideal matrices) \\
Harmonic output rate      & $\cO(\eps_Y(h_Y))$ \\
Boundary input rate       & $\cO(h_X^{\,s+2-d/2})$ \\
Defect contribution       & $\Delta_{\mathrm{def}}$ \\
Cost per query            & $\cO(\log(1/\eta_*))$ applications of $\wh S_*$ \\
Limitation               & $\kappa_*<1$ (small data; see Appendix~\ref{SEC:Proof of Hypothesis  (H)}) \\
\hline
\end{tabular}
\caption{Rates for~\eqref{EQ:Diff Nonl} under Corollary~\ref{COR:Chord Rates}. The linear approximation hypotheses must hold uniformly along the iteration. The total limiting error includes the defect contribution and is amplified by $(1-\kappa_*)^{-1}$.}
\label{TAB:Chord Comparison}
\end{table}

By~\eqref{EQ:Chord Error}, the transient $\kappa_*^k\|\wh u^{(0)}-Tf\|_\infty$ falls
below the floor $\eta_*/(1-\kappa_*)$ once
\[
   k\;\ge\;\frac{\log\bigl(\|\wh u^{(0)}-Tf\|_{L^\infty}(1-\kappa_*)/\eta_*\bigr)}
                {\log(1/\kappa_*)}
   \;=\;\cO\!\left(\frac{\log(1/\eta_*)}{\log(1/\kappa_*)}\right).
\]
When the algebraic input term dominates $\eta_*$ this is $\cO(\log(1/h_X))$, so only
\emph{logarithmically many} applications of $\wh S_*$ are needed. Each learned correction is evaluated by an encode--matrix--decode product, together with the computable potential in the reconstructed solve. Relative to one such linear solve, the iteration adds only a logarithmic factor.

Let us end this section with the following remark. Full Newton replaces $\sigma_*$ by $q'(u^{(k)})$ at every step and converges quadratically, so the iteration count improves to $\cO(\log\log(1/\eta_*))$ iterations. The linearized operator is $DT(f)^{-1}=\cL_0+q'(Tf)$, again of the form treated in Section~\ref{SEC:Elliptic}, so each individual solve remains decomposable. As a \emph{learning} scheme, however, the operator changes from step to step, so one must learn the coefficient-parametric family  $(\sigma,g)\mapsto(\cL_0+\sigma)^{-1}g$, which is nonlinear in $\sigma$ and therefore falls outside the exactly-representable linear framework of this paper, reinstating a genuine approximation error. Since the gain is only $\log(1/\eta_*)\to\log\log(1/\eta_*)$ in the iteration count while the loss is the entire exact-representation advantage, freezing the coefficient is the better trade, and we adopt it. Learned Newton steps and Picard-type iterations built on neural operators have been studied in~\cite{HaLiYa-arXiv24,TaSo-arXiv26}. The scheme above differs in that the iterated object is an exactly representable linear surrogate, whose approximation rates the iteration inherits.

\section{Numerical experiments}
\label{SEC:Numer}

We now present numerical experiments illustrating the approximation gains of the kernel decomposition strategy.

\subsection{Setup for simulations} 

We consider the problem on the unit disk $\Omega = \{x \in \bbR^2 : |x| < 1\}$, which has an analytic boundary. For illustration, we discretize $\Omega$ on a Cartesian grid of $N_{1d}\!\times\!N_{1d}$ 
points masked to the disk, with $N_\Omega\approx \pi N_{1d}^2/4$ interior points, mesh spacing $h_{\mathrm g}=2/(N_{1d}-1)$ (not to be confused with the fill distances $h_X,h_Y$), and quadrature weight $h_{\mathrm g}^2$ per cell. The boundary $\partial\Omega$ is sampled at $m_b$ uniformly spaced angles $\theta_k=2\pi k/m_b$ with arc-length weight $2\pi/m_b$, so that trapezoidal integration of periodic boundary data is spectrally accurate.

We compare the direct learning approach with our kernel decomposition approach. Both methods parameterize a separable kernel 
\[
\wh K(x,\xi)=\Psi_{\mathrm{out}}(x)^\top C\,\Psi_{\mathrm{in}}(\xi),
\]
with $\Psi_{\mathrm{out}}$ a tensor-product Chebyshev basis on $\Omega$ of order $p_{\mathrm{out}}$ in each spatial direction, hence $p_{\mathrm{out}}^2$ volume modes. They differ in the input-side basis: tensor-product Chebyshev on $\Omega$ of order $p_{\mathrm{in}}$ ($p_{\mathrm{in}}^2$ volume modes) for the direct method, and Fourier truncation of order $q_{\mathrm{bdy}}$ on $\partial\Omega$ ($2q_{\mathrm{bdy}}+1$ boundary modes) for the decomposed method. These basis choices are dictated by the input domains, not tuned for performance: tensor-product Chebyshev is the standard spectral basis on the bounded, non-periodic disk, whereas on the boundary circle $S^1$ the Fourier basis is the natural choice, being the eigenbasis of the boundary Laplacian and exact for smooth periodic data. Using the same basis for both would unfairly penalize one of them. The change of basis instead reflects the dimensional reduction of Section~\ref{SUBSEC:Improved Algo}. 

Training and test inputs are Gaussian source bumps $f_\ell(x)=\exp(-\beta\,|x-c_\ell|^2)$ with bandwidth $\beta=40$ and random centers $c_\ell$ drawn uniformly from a disk of radius $0.6$, so the centers remain separated from $\partial\Omega$; the Gaussian sources are not compactly supported inside $\Omega$. With fixed bandwidth and bounded centers, this family is uniformly bounded in each finite-order Sobolev norm and has uniform analytic bounds on fixed neighborhoods of $\overline\Omega$. Both methods are trained on the same $N=300$ source-solution pairs $\{(f_\ell,u_\ell)\}$: for each training source $f_\ell$, we generate the response $u_\ell = T(f_\ell)$ by applying the discrete Green's matrix to $f_\ell$. Identical quadrature is used for training and test inputs, with any quadrature error shared by both methods. The sample size is held fixed while the discretization parameters are swept to examine approximation effects. The condition $N\ge m_X$ alone does not establish negligible fitting error: recovery also depends on the encoded-data rank and conditioning, as discussed in Proposition~\ref{PROP:Linear Identifiability}.

The coefficient matrix $C$ is obtained in a closed form by Tikhonov-regularized Galerkin least squares. The regularization parameter is fixed at $\lambda=10^{-10}$ throughout. Outside the convergence studies described below, we set $p_{\mathrm{in}}=q_{\mathrm{bdy}}=6$ and $p_{\mathrm{out}}=12$.

The bounds in Sections~\ref{SEC:Learning-RKHS} and~\ref{SEC:Elliptic} concern kernel-translate subspaces, whereas these experiments use Chebyshev and Fourier bases. They illustrate the role of regularity and boundary encoding, but do not directly verify the kernel power-function estimates: changing the approximation subspace need not preserve its convergence rate. In particular, Mercer's theorem does not imply equal rates for arbitrary subspaces of the same dimension. The Gaussian data are analytic, so an algebraic fit over the tested range should not be read as the optimal asymptotic rate of direct learning.

\subsection{Direct vs decomposition} 

We start with the case of $\gamma=1$ and $\sigma=0$, that is, learning the solution operator of the Dirichlet Poisson problem. On the unit disk, this problem admits a closed-form Green's function and decomposition
\begin{equation}\label{EQ:Poisson Kernel Disk}
\begin{aligned}
    \Phi(x,y) &= -\frac{1}{2\pi}\log|x-y|, \qquad
    G(x,y) = \Phi(x,y) - H(x,y), \\
    H(x,y)    &= -\frac{1}{4\pi}\log\bigl(1 - 2\,x\!\cdot\!y + |x|^2|y|^2\bigr).
\end{aligned}
\end{equation}

\begin{figure}[!htb]
  \centering
  \includegraphics[width=\linewidth]{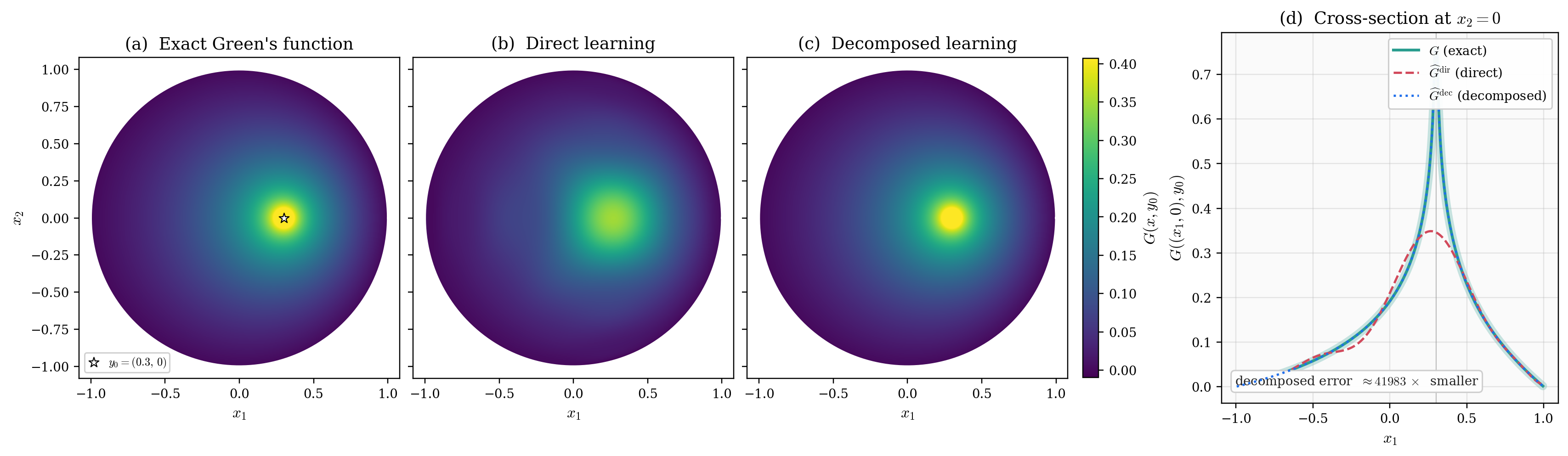}
  \caption{Learned Dirichlet Green's function on the unit disk at a fixed source point $y_0=(0.3,0)$.  (a)-(c) are, respectively, the analytical $G(x,y_0)$, $\wh G^{\mathrm{dir}}(x,y_0)$ (direct learning result), and $\wh G^{\mathrm{dec}}(x,y_0)$ (decomposition learning result). (d) overlays the three kernels along the horizontal line $\{x_2=0\}$ through $y_0$. 
  }
    \label{FIG:Visualization}
\end{figure}
In Figure~\ref{FIG:Visualization}, the kernels recovered by these different methods are visualized. Taking a fixed source point $y_0 \in \Omega$, we plot the column $\wh G(\,\cdot\,, y_0)$ of the learned kernel for both methods alongside the analytical form of the ground truth $G(\,\cdot\,, y_0)$ from~\eqref{EQ:Poisson Kernel Disk} on a common color scale (Figure~\ref{FIG:Visualization}). The smooth Chebyshev basis adopted by the direct method fails to reproduce the logarithmic peak of $G$ at $y_0$, leaving a visibly flattened approximation in a neighborhood of the source point. The method based on the decomposition, by contrast, reproduces the local structure of $G$ to high accuracy, via the implicit reconstruction $\wh G^{\mathrm{dec}} = \Phi - \int_{\partial\Omega}\wh E(\cdot,\theta)\,\Phi(\theta,y_0)\,dS(\theta)$, in which the singular part is supplied analytically by $\Phi$ and only the smooth corrector is learned.

\begin{figure}[!htb]
  \centering
  \includegraphics[width=\linewidth]{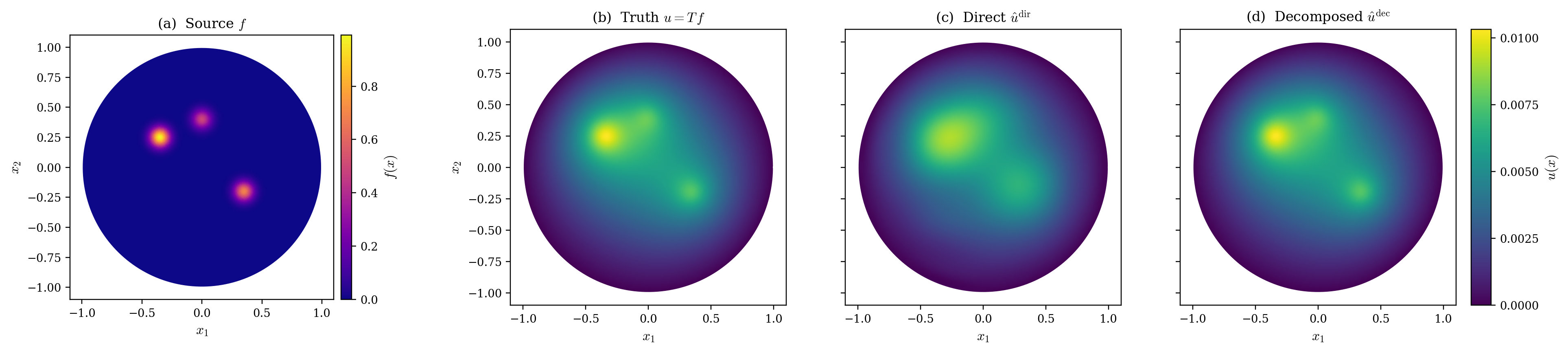}
 \caption{Solution-operator comparison on an out-of-training source of three sharp positive Gaussian bumps, $f(x)=\exp(-\beta_{\mathrm{src}}|x-c_1|^2) +0.7\,\exp(-\beta_{\mathrm{src}}|x-c_2|^2)+0.5\,\exp(-\beta_{\mathrm{src}}|x-c_3|^2)$, with $c_1=(-0.35,0.25)$, $c_2=(0.35,-0.20)$, $c_3=(0,0.40)$ and bandwidth $\beta_{\mathrm{src}}=160$. Panel~(a) shows the source $f$. Panels~(b)--(d) show the truth $u=T f$, the direct prediction $\wh u^{\mathrm{dir}}$, and the decomposed prediction $\wh u^{\mathrm{dec}}$ on a common color scale. 
 }
    \label{FIG:Const Operator Three Point}%
\end{figure}
Next, we compare the two learning strategies at the solution-operator level, applying each learned operator to test sources outside the training family and plotting $\wh u = \wh T(f)$ against the ground truth $u = T(f)$. In Figure~\ref{FIG:Const Operator Three Point}, the source is three sharp positive Gaussian bumps: the direct prediction $\wh u^{\mathrm{dir}}$ blurs the sharp features together, while the decomposition-based prediction $\wh u^{\mathrm{dec}}$ keeps them distinct. In Figure~\ref{FIG:Const Operator Oscillatory} the source is the globally oscillatory $f(x)=\sin(8\pi x_1)\sin(8\pi x_2)$ (panel~(a)), whose response $u=Tf$ retains the oscillatory pattern at reduced amplitude (panel~(b)). The direct estimate $\wh u^{\mathrm{dir}}$ loses the oscillation entirely, replacing it with a few large smooth regions and visible errors near the boundary (panel~(c)), whereas the decomposed estimate $\wh u^{\mathrm{dec}}$ reproduces the checkerboard pattern and stays visibly close to $u$ across the domain (panel~(d)).
\begin{figure}[!htb]
 \centering
 \includegraphics[width=\linewidth]{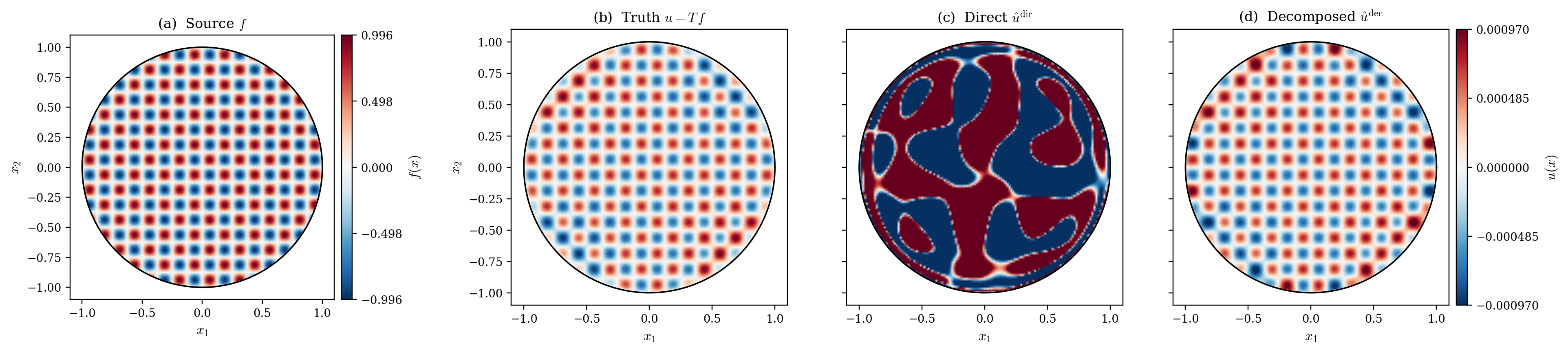}
\caption{Solution-operator comparison on the out-of-training oscillatory source $f(x)=\sin(8\pi x_1)\sin(8\pi x_2)$. (a): source $f$. (b)--(d): the truth $u=Tf$, the direct prediction $\wh u^{\mathrm{dir}}$, and the decomposed prediction $\wh u^{\mathrm{dec}}$ on a common symmetric scale $9.7\times10^{-4}$.
}
 \label{FIG:Const Operator Oscillatory}%
\end{figure}

\subsection{Input and output rates}

To examine the input-side convergence, we hold $p_{\mathrm{out}}{=}12$ fixed and fit the range before the output and computational error floors become visible. We sweep the input-side basis size for both methods and show the mean $L^\infty$ test error $\cE$ against the input degrees of freedom $N_{\mathrm{in}}$ in the left plot of Figure~\ref{FIG:Input-Output Conv}. Here $N_{\mathrm{in}}$ counts the active input-side basis modes: it equals $p_{\mathrm{in}}^2$ volume Chebyshev modes for the direct method and $2q_{\mathrm{bdy}}+1$ boundary Fourier modes for the decomposition-based method. We summarize the power-law portion by a finite-range fitted index, not by a maximal Sobolev regularity of the sources.
\begin{figure}[!htb]
  \centering  \includegraphics[width=0.475\linewidth]{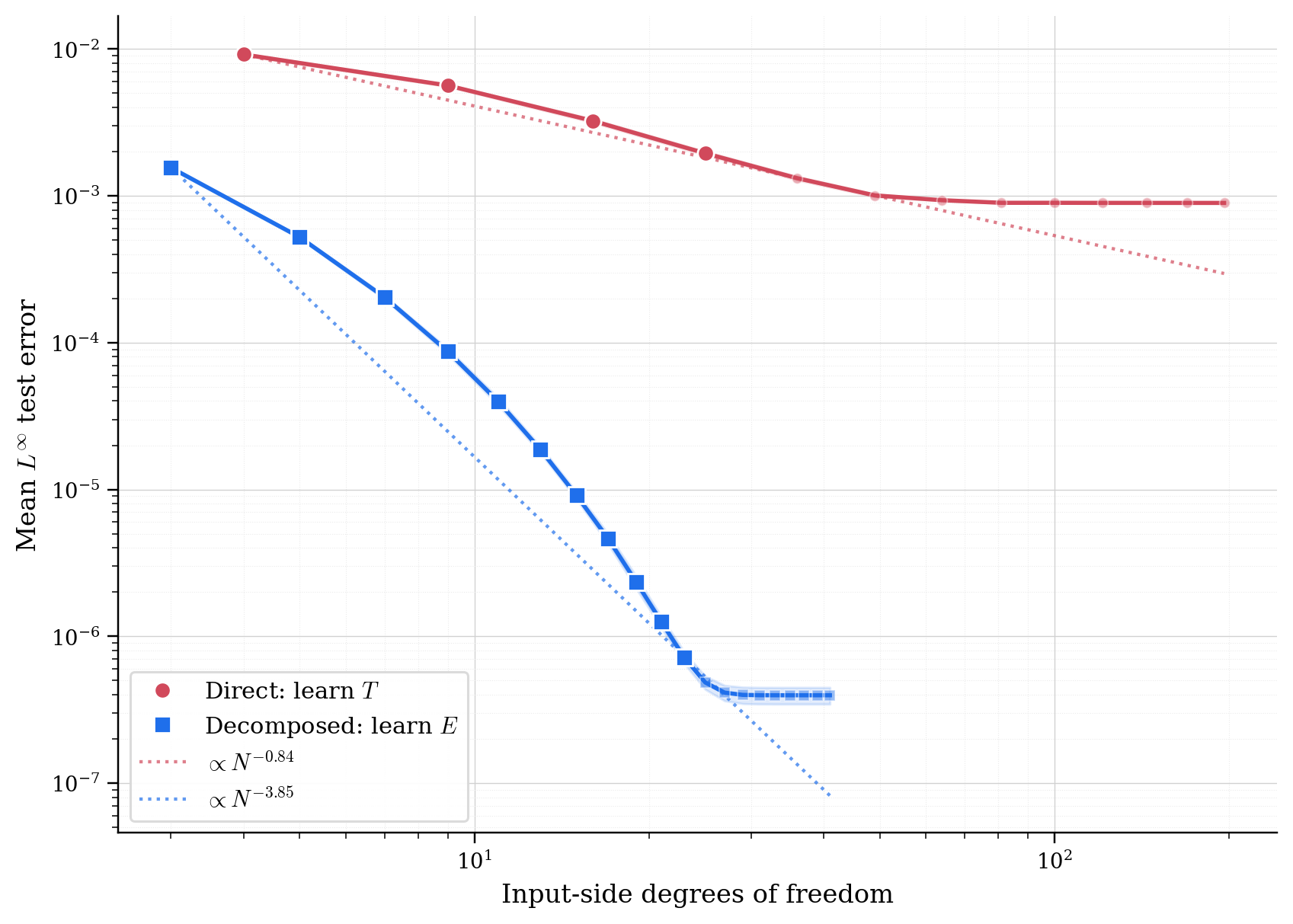} 
\includegraphics[width=0.475\linewidth]{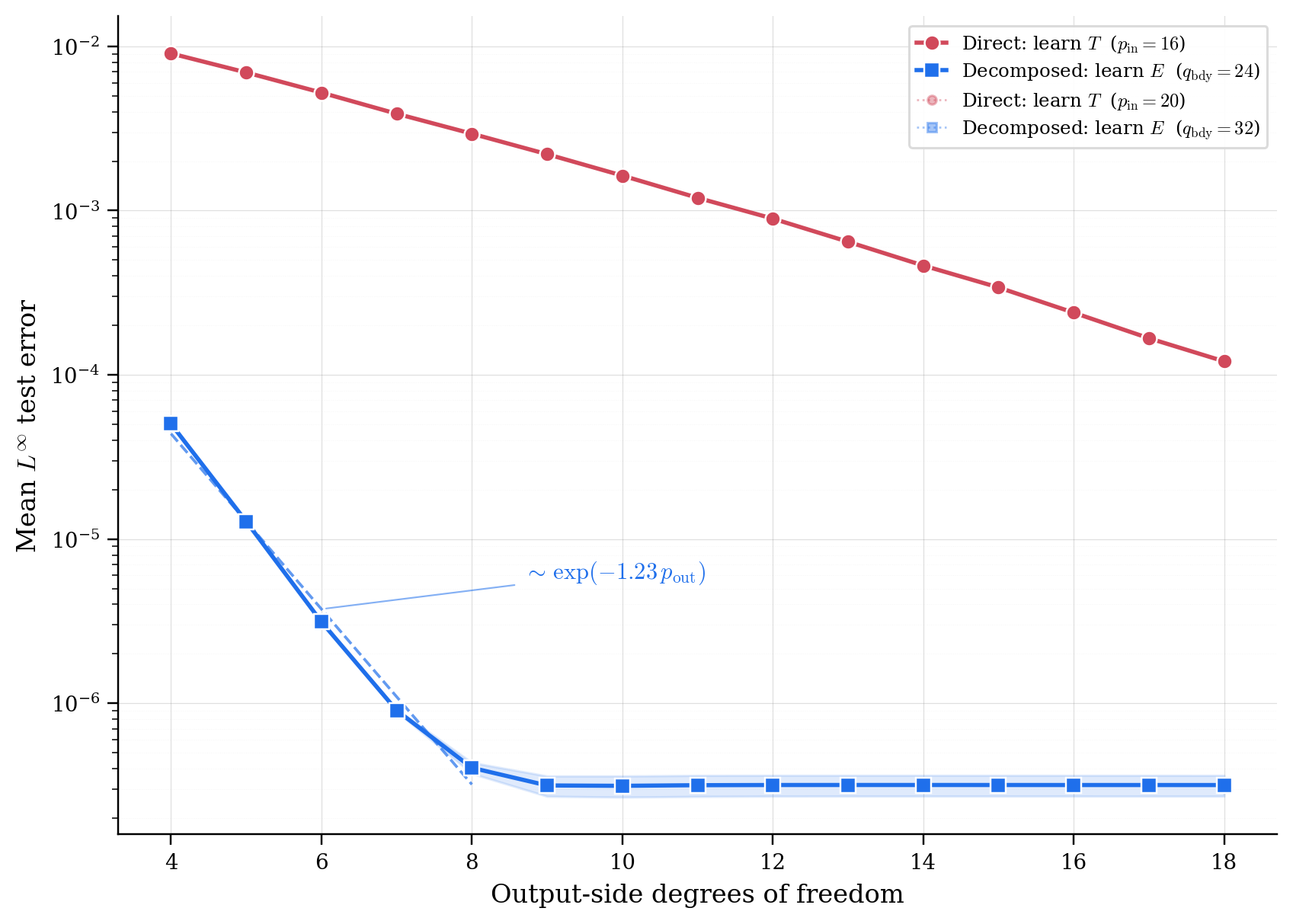}
  \caption{Input-side (left, log-log axes, $p_{\rm out}=12$) and output-side (right, semilog-$y$ axes, $p_{\rm in}=16$ and $q_{\rm bdy}=24$) convergence of $L^\infty$ error.
  }
\label{FIG:Input-Output Conv}
\end{figure}

Past their fitted ranges, the last several points of each curve leave the fitted slope (faded markers in the left plot of Figure~\ref{FIG:Input-Output Conv}) and are excluded from the fits. The direct curve levels off at around $8.9\times10^{-4}$: with the output basis frozen at $p_{\mathrm{out}}{=}12$, enriching the input cannot push the error below what that fixed output space can represent.  The decomposed curve levels off far lower, at around $4.0\times10^{-7}$, against a floor set jointly by the Tikhonov regularization $\lambda{=}10^{-10}$ and the finite grid and boundary-quadrature resolution. Both plateaus are artifacts of the discretization rather than of the approximation rates, and we return to them later in this section.
 
For comparison, substituting $h_X\asymp N_{\mathrm{in}}^{-1/d'}$, with $d'=2$ for volume samples and $d'=1$ for boundary samples, into the Sobolev input bounds gives the reference powers
\begin{equation}
  \underbrace{h_X^{\,s-d/2}\asymp N_{\mathrm{in}}^{-(s-1)/2}}_{\text{direct, }d'=d=2},
  \qquad
  \underbrace{h_X^{\,s+2-d/2}\asymp N_{\mathrm{in}}^{-(s+1)}}_{\text{decomposed, }d'=d-1=1}.
\end{equation} 
 
To examine the output-side convergence, we freeze the input bases at high resolution ($p_{\mathrm{in}}{=}16$ for the direct method, $q_{\mathrm{bdy}}{=}24$ for the decomposed one) so that the input-side error is negligible, and we sweep the shared output Chebyshev order $p_{\mathrm{out}}$ to isolate the output projection error in the right plot of Figure~\ref{FIG:Input-Output Conv}.  The
plot uses semilog-$y$ axes, on which an exponential decay in $p_{\mathrm{out}}$ appears as a straight line, and a slower, algebraic decay appears concave.  Read this way, the two methods separate sharply. The decomposed curve falls along a straight line through its pre-floor regime $p_{\mathrm{out}}\in\{4,\dots,8\}$, with least-squares decay constant around $e^{-1.23\,p_{\mathrm{out}}}$, and reaches a floor of $3\times10^{-7}$ by $p_{\mathrm{out}}{=}9$. Beyond this point, the approximation error is buried under the computational and regularization error, so further output modes bring no improvement. The direct curve decays much more slowly over the tested range, staying more than two orders of magnitude above the decomposed floor at $p_{\mathrm{out}}{=}18$.

\subsection{Variable-coefficient case}
\label{SUBSEC:Num Var Coeff}

We next present numerical results for the variable-coefficient case. In this case, we do not have access to the analytical formula of $\Phi^\cL$, so we rely on the computable decomposition strategy of Section~\ref{SEC:Computable-Decom}. We take $\cL$ with $\gamma=1+0.5\sin(\pi x_1)\cos(\pi x_2)$ and $\sigma\equiv 0$ on the unit disk, and write $G_\cL$ for the corresponding Green's function. Ground-truth responses are produced by a polar spectral collocation solver (Fourier in $\theta$, Chebyshev in $r$, and validated to $5\times10^{-12}$ on manufactured solutions). The parametrix potentials and their boundary traces at all orders are evaluated by fixed discrete quadratures shared between training and testing. Because the truth is now produced by an independent solver, the shared-quadrature cancellation of the constant coefficient case is only approximate, and the decomposition-based method inherits a consistency floor of order $10^{-5}$. Other parameters are kept the same as those in the constant-coefficient case (for instance, $N=300$).

\begin{figure}[!htb]
  \centering
  \includegraphics[width=\linewidth]{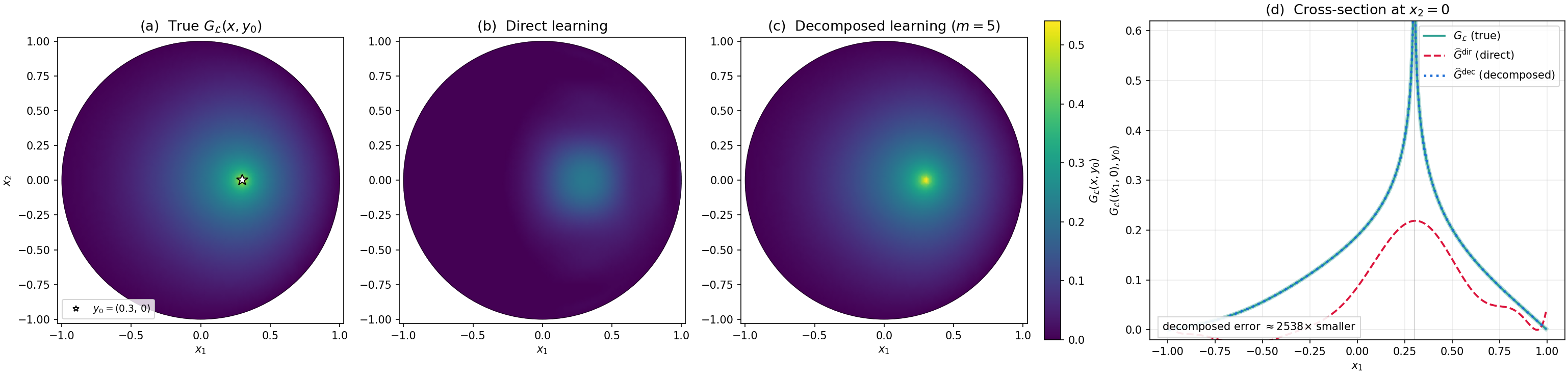}
  \caption{The analog of Figure~\ref{FIG:Visualization} for the variable-coefficient operator, with the order-$5$ Levi parametrix in place of $\Phi^\cL$. (a) reference column $G_\cL(x,y_0)$ from the
  parametrix-plus-remainder construction; (b) direct learning, evaluated on
  the unit-mass Gaussian surrogate of $\delta_{y_0}$ that its volume input
  basis requires; (c) decomposed learning, evaluated at the exact point
  source; (d) the three kernels along the line $\{x_2=0\}$ through $y_0$.}
  \label{FIG:VarCoef Kernel}
\end{figure}
Figure~\ref{FIG:VarCoef Kernel} shows the true and learned Green's function $G_\cL(\cdot,y_0)$. The ``true" Green's function is constructed independently of the learning pipeline, as $P^{(5)}(\cdot,y_0)-H_0^{(5)}$ with $\cL H_0^{(5)}=-\cF_0^{6}\delta_{y_0}$ and $H_0^{(5)}|_{\partial\Omega}=P^{(5)}(\cdot,y_0)$ (the order-$5$ analogue of $H_0=P_0-G$), solved spectrally after a harmonic lifting. Carrying out the same construction at order $m=1$ instead changes the column by at most $6\times10^{-6}$, so the reference is reliable at the $10^{-5}$ level of the experiment.

Figure~\ref{FIG:VarCoef Rates} shows the corresponding finite-range convergence fits in the same setting as Figure~\ref{FIG:Input-Output Conv}. On the input side, freezing $p_{\mathrm{out}}=12$ gives $N_{\mathrm{in}}^{-0.81}$ for the direct method against $N_{\mathrm{in}}^{-2.77}$ for the decomposition method. On the output side, freezing input bases gives a power-law fit $p_{\mathrm{out}}^{-2.13}$ for the direct method and an exponential fit $e^{-0.59\,p_{\mathrm{out}}}$ for the decomposition method. These fits describe the tested analytic data and resolution range, not an exponential guarantee for the full finite-order parametrix correction over general Sobolev source classes.
\begin{figure}[!htb]
  \centering
  \includegraphics[width=0.495\linewidth]{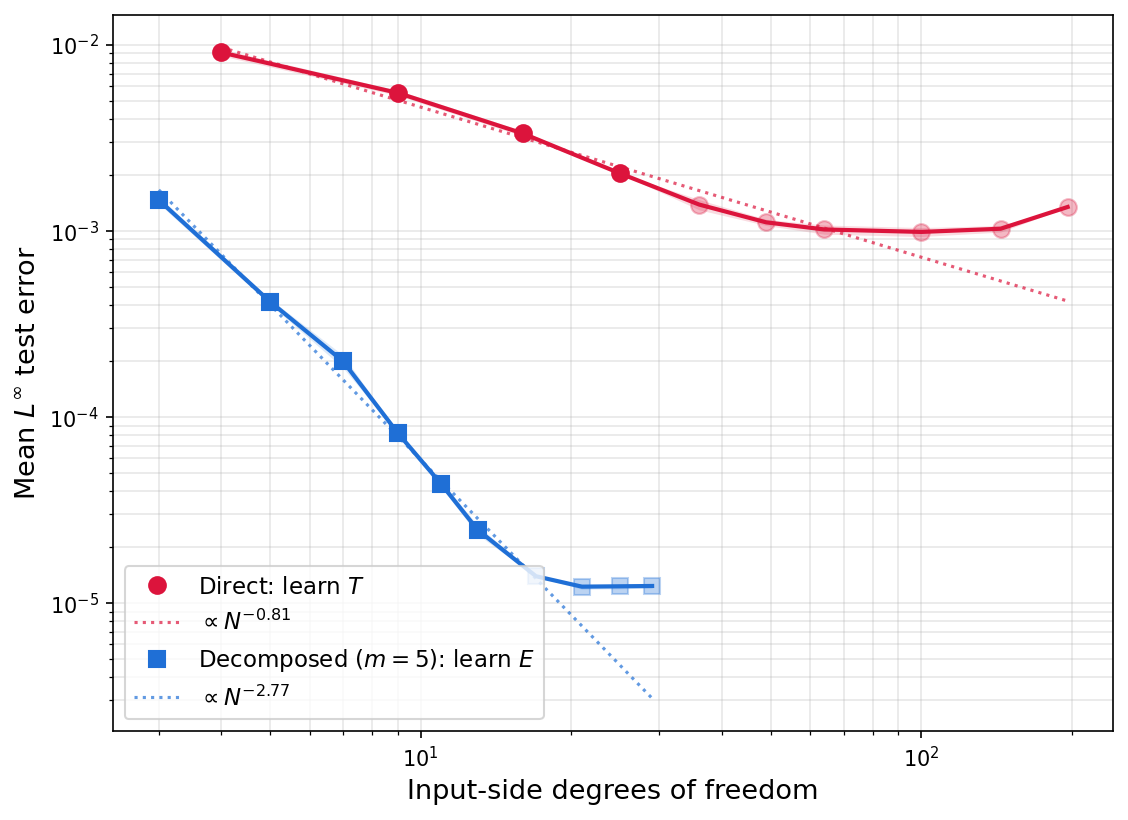}\hfill
  \includegraphics[width=0.495\linewidth]{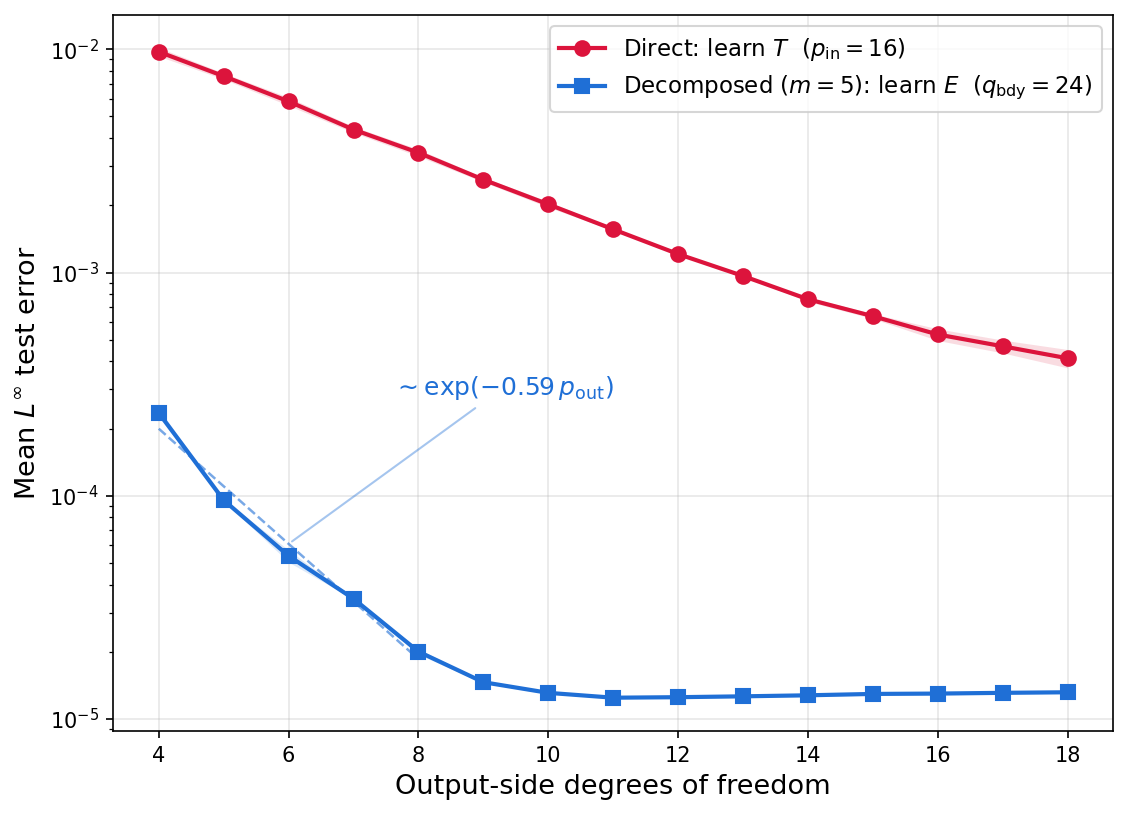}
  \caption{The analogs of Figure~\ref{FIG:Input-Output Conv} for the variable-coefficient case with $m=5$. Left: log-log axes with $p_{\mathrm{out}}=12$. Right: semilog-$y$ axes with $p_{\mathrm{in}}=16$ and $q_{\mathrm{bdy}}=24$.}
\label{FIG:VarCoef Rates}
\end{figure}

\section{Conclusion}
\label{SEC:Concl}

This work studies the problem of learning the solution operator, equivalently the Green's function, of self-adjoint second-order elliptic PDEs. We established the energy-space orthogonal decomposition $\cH_N\cong\cH_G\oplus_a\cH_{H^\cL_a}$ (for $\sigma\not\equiv0$) and turned the pointwise splitting $G=\Phi^\cL-H^\cL$ into a kernel-decomposition learning strategy. Using tools from reproducing kernel Hilbert space learning theory, we show the approximation advantage of the kernel decomposition learning strategy. For the exact decomposition, replacing volume sources by boundary traces combines a $3/2$-order Sobolev lift with the lower boundary dimension, improving the input power-function exponent by two. The global output bound is algebraic for general Sobolev data and improves with the regularity of the induced boundary data. An exponential bound requires the uniform analytic and approximation assumptions of Corollary~\ref{COR:Global Rates}. A finite-order parametrix also introduces a defect potential with its own regularity estimate. Preliminary numerical experiments illustrate these approximation gains.

There are a few interesting directions of future research. First, the most immediate extension is to consider the statistical effect. The bounds presented in this work are \emph{a priori} \emph{approximation} estimates and, apart from the exact-recovery condition of Proposition~\ref{PROP:Linear Identifiability}, do not involve the sample size $N$. Combining them with estimation bounds for the fitting
step would yield a complete sample-complexity theory and, in particular, the optimal balance $m_X=m_X(N)$ between projection accuracy and data requirement. Second, the global exponential output bound requires analytic geometry and coefficients, uniformly analytic boundary data, and compatible approximation spaces. On domains with corners, both the output regularity and the boundary-trace regularity must be reconsidered. Neither the exponential output rate nor the full input gain is automatic. A careful analysis of these domains is an important project. Third, our numerics are linear, noise-free, and confined to a single geometry. Extending the empirical study to noisy data through the Tikhonov-regularized encoders, to more general geometries, and to nonlinear operators are natural steps to take further. Finally, extending the framework to accommodate Robin and mixed boundary conditions, by modifying the boundary trace space, is also an interesting direction.

\section*{Acknowledgments}

This work is partially supported by the National Science Foundation through grant DMS-2309802. QD is also partially supported by the National Science Foundation through grant DMS-2309245 and the US Department of Energy through grant DE-SC0025347. KR is also partially supported by the Gordon \& Betty Moore Foundation Award GBMF12801 (doi.org/10.37807/GBMF12801). 

{\small
\bibliography{BIB-REN}
\bibliographystyle{siam}
}

\appendix
 
\section{Proof of hypothesis \texorpdfstring{($\bH$)}{} for small data}
\label{SEC:Proof of Hypothesis  (H)}

Here we show that the hypothesis ($\bH$) can be verified in the case of small data.

Let us recall from Section~\ref{SUBSEC:Iterative Decomp} the notation
\begin{equation}\label{EQ:Psi}
   \psi(x,t):=q(t)-\sigma_*(x)\,t,
   \qquad\text{so that}\qquad
   g^{(k)}=f-\psi\bigl(\cdot,\wh u^{(k)}\bigr).
\end{equation}
Note then that $\psi(x,0)=0$, $\partial_t\psi=q'-\sigma_*$. Since $s>d/2$ makes $H^s(\Omega)$ a Banach algebra and $\psi(x,\cdot)$ is smooth and vanishes at $t=0$, a Moser-type estimate supplies a nondecreasing $\Lambda_\psi:[0,\infty)\to[0,\infty)$ with
$\|\psi(\cdot,v)\|_{H^s}\le \Lambda_\psi(\|v\|_{H^s})\|v\|_{H^s}$ (the subscript distinguishes it from the trace operator $\Lambda$ of Theorem~\ref{THM:H Partial}). Let
$C_{\wh S}:=\|\wh S_*\|_{\cH_{\mathrm{src}}\to\cH_{\mathrm{src}}}$, which is
finite because the decomposed surrogate has the form
$\wh S_*=T_{P_0}-\cR_Y\circ \bM \circ(\text{feature maps}){}-\wh D_*$: the parametrix
potential is bounded, $T_{P_0}:H^s(\Omega)\to H^{s+2}(\Omega)\hookrightarrow
H^s(\Omega)$ (transmission property~\cite{McLean-Book00}), and the learned
corrector is a bounded finite-rank map into the selected regularity-controlled output space, which embeds into $H^s(\Omega)$. The corrector itself need not have zero boundary trace. The defect-potential approximation $\wh D_*$ of Corollary~\ref{COR:Chord Rates} is likewise assumed to be a bounded finite-rank map into $H^s(\Omega)$. Note that $\wh S_*$ is \emph{not} itself finite-rank.

\begin{lemma}\label{LEM:H Small Data}
Fix $\rho_s>0$ with
\begin{equation}\label{EQ:Small Data Hs}
   C_{\wh S}\, \Lambda_\psi(\rho_s)\le\tfrac12
   \qquad\text{and}\qquad
   C_{\wh S}\,\|f\|_{H^s(\Omega)}\le\tfrac{\rho_s}{2},
\end{equation}
and define
\[
   R_*:=\|f\|_{H^s(\Omega)}+\Lambda_\psi(\rho_s)\,\rho_s,
   \qquad
   \cM_*:=\bigl\{g\in\cH_{\mathrm{src}}:\|g\|_{H^s(\Omega)}\le R_*\bigr\},
\]
with $\eta_*$ given by~\eqref{EQ:Eta} for this $\cM_*$. Assume $\wh u^{(0)}=0$ and
that for some $\rho>0$
\begin{equation}\label{EQ:Small Data Linf}
   \kappa_*(\rho):=\|\tau_{\sigma_*}\|_{L^\infty}\!\!\sup_{x\in\Omega,\,|t|\le\rho}\!\!
   \bigl|q'(t)-\sigma_*(x)\bigr|<1,
   \qquad
   \|\tau_{\sigma_*}\|_{L^\infty}\|f\|_{L^\infty(\Omega)}+\eta_*
   \le\bigl(1-\kappa_*(\rho)\bigr)\rho .
\end{equation}
Then for every $k$ one has $\|\wh u^{(k)}\|_{H^s}\le\rho_s$,
$\|\wh u^{(k)}\|_{L^\infty}\le\rho$, and $g^{(k)}\in\cM_*$. In particular, {\rm ($\bH$)} holds, and Theorem~\ref{THM:Chord Convergence} applies with the
constant $\kappa_*=\kappa_*(\rho)$.
\end{lemma}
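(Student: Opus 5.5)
We argue by induction on $k$, carrying the three claims $\|\wh u^{(k)}\|_{H^s}\le\rho_s$, $\|\wh u^{(k)}\|_{L^\infty}\le\rho$ and $g^{(k)}\in\cM_*$ together. The base case is trivial: $\wh u^{(0)}=0$ satisfies both norm bounds, and $g^{(0)}=f-\psi(\cdot,0)=f$ has $\|f\|_{H^s}\le R_*$. For the inductive step, assume the two norm bounds for $\wh u^{(k)}$.

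\emph{Step 1: $g^{(k)}\in\cM_*$.} Use the Moser-type estimate and the monotonicity of $\Lambda_\psi$:
\[
\|g^{(k)}\|_{H^s}\le\|f\|_{H^s}+\Lambda_\psi(\|\wh u^{(k)}\|_{H^s})\|\wh u^{(k)}\|_{H^s}\le\|f\|_{H^s}+\Lambda_\psi(\rho_s)\rho_s=R_*.
\]

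\emph{Step 2: the $H^s$ bound for $\wh u^{(k+1)}$.} Apply $C_{\wh S}$ to the same splitting of $g^{(k)}$:
\[
\|\wh u^{(k+1)}\|_{H^s}\le C_{\wh S}\|f\|_{H^s}+C_{\wh S}\Lambda_\psi(\rho_s)\rho_s\le\tfrac{\rho_s}{2}+\tfrac{\rho_s}{2}=\rho_s,
\]
using both parts of~\eqref{EQ:Small Data Hs}.

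\emph{Step 3: the $L^\infty$ bound for $\wh u^{(k+1)}$.} This is the step needing care, since $\wh S_*$ has no $L^\infty$ maximum principle. Write
\[
\wh u^{(k+1)}=(\wh S_*-S_*)g^{(k)}+S_*g^{(k)}.
\]
The first term is bounded by $\eta_*$, since $g^{(k)}\in\cM_*$ by Step 1. For the second term, use $\psi(x,0)=0$ and the mean value theorem pointwise: $\psi(x,\wh u^{(k)})=(q'(\xi)-\sigma_*)\wh u^{(k)}$ with $|\xi|\le|\wh u^{(k)}|\le\rho$. Then the positivity bound $\|S_*g\|_{L^\infty}\le\|\tau_{\sigma_*}\|_{L^\infty}\|g\|_{L^\infty}$ from the proof of Lemma~\ref{LEM:Chord Contraction} gives
\[
\|S_*g^{(k)}\|_{L^\infty}\le\|\tau_{\sigma_*}\|_{L^\infty}\|f\|_{L^\infty}+\kappa_*(\rho)\rho.
\]
Adding $\eta_*$ and invoking~\eqref{EQ:Small Data Linf} yields $\|\wh u^{(k+1)}\|_{L^\infty}\le(1-\kappa_*(\rho))\rho+\kappa_*(\rho)\rho=\rho$. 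This closes the induction.

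\emph{Conclusion.} Hypothesis ($\bH$) holds with the bounded set $\cM_*$. The iterates take values in $J=[-\rho,\rho]$, so $\kappa_*$ may be taken as $\kappa_*(\rho)$. It remains to check that $Tf$ also lies in $J$, as Theorem~\ref{THM:Chord Convergence} requires. The map $v\mapsto S_*[f-\psi(\cdot,v)]$ sends the $L^\infty$-ball of radius $\rho$ into itself by the Step 3 estimate with $\eta_*$ dropped, and it is a $\kappa_*(\rho)$-contraction there by Lemma~\ref{LEM:Chord Contraction}. Its fixed point is therefore in the ball, and by uniqueness (Proposition~\ref{PROP:Well-posedness Nonl}) that fixed point is $Tf$. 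One technicality remains: $\eta_*<\infty$ must hold, and it follows from the boundedness of $\cM_*$ in $H^s\hookrightarrow L^\infty$ together with the boundedness of $S_*$ and $\wh S_*$ on $H^s$.
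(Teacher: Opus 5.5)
Your proposal is correct and follows the paper's proof essentially step for step: the $H^s$ bound comes from the Moser estimate together with $C_{\wh S}$, and the $L^\infty$ bound comes from splitting $\wh S_*=S_*+(\wh S_*-S_*)$, using the torsion bound for $S_*$ and $\eta_*$ for the surrogate error. Your explicit check that $Tf$ lies in $[-\rho,\rho]$, via the contraction $v\mapsto S_*[f-\psi(\cdot,v)]$, is the same argument the paper places inside the proof of Theorem~\ref{THM:Chord Convergence}, and your remark that $\eta_*<\infty$ is a harmless addition.
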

\begin{proof}
Note first that $\rho_s$, $R_*$, $\cM_*$ and hence $\eta_*$ are defined from $f$,
$q$, $\sigma_*$ and $C_{\wh S}$ alone, so~\eqref{EQ:Small Data Linf} is well-defined. 
 
Put $a_k:=\|\wh u^{(k)}\|_{H^s}$, so $a_0=0$. If $a_k\le\rho_s$, then by~\eqref{EQ:Psi} and the Moser estimate
$\|g^{(k)}\|_{H^s}\le\|f\|_{H^s}+ \Lambda_\psi(a_k)a_k\le R_*$, so $g^{(k)}\in\cM_*$, and
since $\wh u^{(k+1)}=\wh S_*g^{(k)}$,
\[
   a_{k+1}\le C_{\wh S}\bigl(\|f\|_{H^s}+\Lambda_\psi(a_k)a_k\bigr)
   \le\tfrac{\rho_s}{2}+\tfrac{a_k}{2}\le\rho_s
\]
by~\eqref{EQ:Small Data Hs}. Induction gives $a_k\le\rho_s$ and $g^{(k)}\in\cM_*$
for all $k$, which is the membership required in {\rm ($\bH$)} and legitimizes the use of $\eta_*$ below.

Since $\psi(x,0)=0$, the mean value theorem gives $|\psi(x,t)|\le\Theta(\rho)|t|$ for $|t|\le\rho$, where
$\Theta(\rho):=\sup_{x,|t|\le\rho}|q'(t)-\sigma_*(x)|$. Assume inductively
$\|\wh u^{(k)}\|_{L^\infty}\le\rho$, true for $k=0$. Then
$\|g^{(k)}\|_{L^\infty}\le\|f\|_{L^\infty}+\Theta(\rho)\rho$. We apply~\eqref{EQ:Eta} together with the bound
$\|S_*g\|_{L^\infty}\le\|\tau_{\sigma_*}\|_{L^\infty}\|g\|_{L^\infty}$ from the proof
of Lemma~\ref{LEM:Chord Contraction} to get
\[
   \|\wh u^{(k+1)}\|_{L^\infty}
   \le\|S_*g^{(k)}\|_{L^\infty}+\eta_*
   \le\|\tau_{\sigma_*}\|_{L^\infty}\|f\|_{L^\infty}+\kappa_*(\rho)\,\rho+\eta_*
   \le\rho,
\]
the last step by~\eqref{EQ:Small Data Linf}. This closes the induction.
\end{proof}

Condition~\eqref{EQ:Small Data Linf} is a genuine smallness restriction on the data:
it fails once $\|f\|_{L^\infty}$ is large enough to push the solution into a range
where $q'$ deviates too much from $\sigma_*$.
 
Two constraints govern the choice of $\sigma_*$. Analytic regularity of $\sigma_*$ is required for the exponential output rate of Corollary~\ref{COR:Chord Rates}
(Section~\ref{SEC:Elliptic} assumes analytic coefficients), together with the uniform analytic data-space and sampling assumptions stated there. The Sobolev invariance result above does not by itself establish these analytic assumptions. The analytic regime excludes data-fitted choices such as $\sigma_*=q'(v)$ for a merely Sobolev-regular $v$: if $v\in H^{s+2}(\Omega)$, then $\sigma_*\in H^{s+2}(\Omega)$ by the Banach-algebra property, an $\cL_*$-harmonic function $w$ solves $\cL_0w=-\sigma_*w$ with right-hand side in $H^{s+2}(\Omega)$ at best, and elliptic regularity yields only $w\in H^{s+4}_{\mathrm{loc}}(\Omega)$. Hence, no output space satisfies the uniform analytic bounds of Corollary~\ref{COR:Global Rates}, and the output rate is capped at the algebraic order of the Sobolev regime. (The initial guess $\wh u^{(0)}=0$ of Lemma~\ref{LEM:H Small Data} gives $\sigma_*=q'(0)$, a constant, which is admissible. The exclusion concerns updated choices such as $\sigma_*=q'(\wh u^{(1)})$.) A \emph{constant} $\sigma_*\equiv c\ge0$ is the natural admissible choice. Subject to that, the oscillation factor $\sup_{t\in J}|c-q'(t)|$of~\eqref{EQ:Kappa} is minimized by the Chebyshev centre of the range of $q'$ on $J$~\cite{Cheney-Book82}. Since the companion factor $\|\tau_c\|_\infty$ is nonincreasing in $c$, this choice minimizes the upper bound obtained after replacing $\|\tau_{\sigma_*}\|_\infty$ by $\|\tau\|_\infty$, though not necessarily $\kappa_*$ itself. The center is
\[
   c=\tfrac12\Bigl(\min_{t\in J}q'(t)+\max_{t\in J}q'(t)\Bigr),
   \qquad\text{giving}\qquad
   \sup_{t\in J}|c-q'(t)|=\tfrac12\operatorname{osc}_{J}(q') .
\]
Picard ($c=0$) instead gives $\sup_{J}|q'|=\max_{J}q'$, so freezing at the Chebyshev
center at least halves the contraction factor and correspondingly widens the range of data for which $\kappa_*<1$. For the cubic $q(u)=u^3$ on $J=[-\rho,\rho]$ one has $q'=3t^2$, so $c=\tfrac32\rho^2$ and
$\sup_{J}|c-q'|=\tfrac32\rho^2$, whereas Picard gives $3\rho^2$; on the unit disk with
$\gamma\equiv1$, where $\|\tau\|_\infty=\tfrac14$, the conditions for convergence are
$\rho<\sqrt{8/3}\approx1.63$ and $\rho<2/\sqrt3\approx1.15$ respectively.

\end{document}